\newtheorem{thm}{Theorem}
\newtheorem{lem}{Lemma}
\newtheorem{prop}{Proposition}
\newtheorem{cor}{Corollary}
\theoremstyle{definition}
\newtheorem{definition}{Definition}
\newtheorem{remark}{Remark}
\newtheorem{example}{Example}
\author{Daniyar Shamkanov\\ \normalsize{\textit{Steklov Mathematical Institute of Russian Academy
of Sciences}}\\
\normalsize{\textit{Gubkina str. 8, 119991, Moscow, Russia}}\\
\normalsize{\textit{National Research University Higher School of Economics}}\\
\normalsize{\textit{Moscow, Russia}}\\ \normalsize{\textit{daniyar.shamkanov@gmail.com}}\\
}
\date{}
\title{(Non-)well-founded derivations in the provability logic $\mathsf{GLP}$\thanks{The article was prepared within the framework of the project ``International academic cooperation'' HSE University.}}
\begin{document}
\maketitle

  \begin{abstract}

We examine cyclic, non-well-founded and well-founded derivations in the provability logic $\mathsf{GLP}$. While allowing cyclic derivations does not change the system, the non-well-founded and well-founded derivations we consider define the same proper infinitary extension of $\mathsf{GLP}$. We establish that this extension is strongly algebraic and neighbourhood complete with respect to both local and global semantic consequence relations. In fact, these completeness results are proved for generalizations of global and local consequence relations, which we call global-local. In addition, we prove strong local neighbourhood completeness for the original system $\mathsf{GLP}$ (with ordinary derivations only).\\\\
\textit{Keywords:} provability logic, cyclic and infinitary derivations, algebraic semantics, neighbourhood semantics, 
local and global consequence relations. 
  \end{abstract}

\section{Introduction}

\label{s0}


A special and interesting type of infinitary derivations are non-well-founded derivations, or $\infty$-derivations for short. They are trees of formulas (sequents) constructed according to the inference rules of a deductive system, in which certain infinite branches are allowed. Notice that now widely known cyclic derivations can be understood as a particular case of non-well-founded ones.
In the given article, we focus on cyclic and non-well-founded derivations in an important provability logic $\mathsf{GLP}$. 

Recall that $\mathsf{GLP}$, introduced by Japaridze in  \cite{Japaridze1986}, is a modal propositional logic whose language contains countably many modal connectives $\Box_0$, $\Box_1$, $\Box_2$, etc. The connective $\Box_n$ can be understood as the predicate ``... is provable in Peano arithmetic extended with all true $\Pi^0_n$-sentences''. Being sound and complete with respect to the given provability semantics, $\mathsf{GLP}$ has deep applications in proof theory, especially, in ordinal analysis of arithmetic \cite{Beklemishev2005}. 

The fragment of $\mathsf{GLP}$ in the language with a single modal connective $\Box_0$ is known as the G\"{o}del-L\"{o}b provability logic $\mathsf{GL}$. A proof-theoretic presentation of $\mathsf{GL}$ in a form of a sequent calculus allowing non-well-founded proofs was given in \cite{Shamkanov2014, Iemhoff2016}. Non-well-founded derivations in the axiomatic calculus for $\mathsf{GL}$ were investigated in articles \cite{Shamkanov2017} and \cite{Shamkanov2020}. In the  given article, we continue this line of research and examine non-well-founded derivations in the axiomatic calculus for $\mathsf{GLP}$.


First, we show that allowing cyclic derivations in $\mathsf{GLP}$ does not change the derivability relation of the system. Then we observe that  non-well-founded derivations are closely related in $\mathsf{GLP}$ to another type of infinitary derivations, namely $\omega$-derivations. While $\infty$-derivations of $\mathsf{GLP}$ can be non-well-founded and have finite branching, $\omega$-derivations are well-founded and allow countable branching. We show that $\mathsf{GLP}$ extended with $\infty$-derivations is equivalent to $\mathsf{GLP}$ with certain $\omega$-derivations. In other words, two families of infinitary derivations in $\mathsf{GLP}$ define the same extension of the system. After this observation, we focus on algebraic and neighbourhood semantics of the extension.







Let us recall that neighbourhood semantics, independently developed by Scott \cite{Scott1970} and Montague \cite{Montague1970}, is a natural generalization of the relational one. 
Although $\mathsf{GLP}$ is incomplete with respect to its relational interpretation, Beklemishev and Gabelaia showed that it is weakly neighborhood complete (see \cite{Beklemishev2013}). In the given article, we strengthen this completeness result by considering local, global and global-local neighbourhood consequence relations over neighbourhood $\mathsf{GLP}$-frames. Accordingly, we introduce local, global and global-local derivability relations in the extension of $\mathsf{GLP}$ with infinitary derivations.

Let us remind the reader that, over neighbourhood $\mathsf{GLP}$-models, a formula $\varphi$ is a local semantic consequence of $\Gamma$ if, for any neighbourhood $\mathsf{GLP}$-model $\mathcal{M} $ and any world $x$ of $\mathcal{M}$,
\[(\forall \psi \in \Gamma \;\; \mathcal{M}, x \vDash \psi) \Rightarrow \mathcal{M}, x \vDash \varphi.\]
A formula $\varphi$ is a global semantic consequence of $\Gamma$ if, for any neighbourhood $\mathsf{GLP}$-model $\mathcal{M} $,
\[(\forall \psi \in \Gamma \;\;\mathcal{M} \vDash \psi) \Rightarrow \mathcal{M} \vDash \varphi.\]
In addition, $\varphi$ is a global-local consequence of sets of formulas $\Sigma$ and $\Gamma$ if
\[( (\forall \psi \in \Gamma \;\; \mathcal{M}, x \vDash \psi)  \wedge (\forall y \neq x \;\;\forall \xi \in \Sigma \;\; \mathcal{M}, y \vDash \xi )) \Longrightarrow \mathcal{M}, x \vDash \varphi\]
for any $\mathsf{GLP}$-model $\mathcal{M}$ and any world $x$ of $\mathcal{M}$. Trivially, the last consequence relation generalizes the local and the global ones. 

For the G\"{o}del-L\"{o}b provability logic $\mathsf{GL}$ with non-well-founded derivations, strong neighbourhood completeness was established in \cite{Shamkanov2017, Shamkanov2020} with respect to local, global and global-local consequence relations. In the given article, we obtain an analogous result for the provability logic $\mathsf{GLP}$.   As a consequence, we also prove strong local neighbourhood completeness of the original system $\mathsf{GLP}$ (with ordinary derivations only).

Finally, we note that this article, in which we summarize our knowledge of non-well-founded derivations in $\mathsf{GLP}$, is an extended version of a conference article \cite{Shamkanov2020a}. 

\section{Cyclic derivations in $\mathsf{GLP}$}
\label{s1}

{In this section, we recall the provability logic $\mathsf{GLP}$ and define a global-local derivability relation for this system, which generalizes standard local and global ones. In addition, we consider cyclic derivations in $\mathsf{GLP}$ and show that the given global-local derivablity relation is not affected if cyclic derivations are allowed. }

The provability logic $\mathsf{GLP}$ is a propositional modal logic in a language with infinitely many modal connectives $\Box_0, \Box_1, \dotsc$. In other words, formulas of the logic are built from the countable set of variables $\mathit{PV} = \{p, q, \dotsc\}$ and the constant $\bot$ using propositional connectives $\to$ and $\Box_i$ for each $i \in \mathbb{N}$. 
Other Boolean connectives and the modal connectives $\Diamond_i$ are considered as abbreviations:
\begin{gather*}
\neg \varphi := \varphi\to \bot,\qquad\top := \neg \bot,\qquad \varphi\wedge \psi := \neg (\varphi\to \neg \psi),
\\
\varphi\vee \psi := \neg \varphi\to \psi, \qquad \varphi \leftrightarrow \psi:=(\varphi\to \psi)\wedge (\psi \to \varphi),\qquad\Diamond_i \varphi := \neg\Box_i \neg \varphi.
\end{gather*}
By $\mathit{Fm}$, we denote the set of formulas of $\mathsf{GLP}$.


The provability logic $\mathsf{GLP}$ is defined by the following Frege-Hilbert calculus.\medskip

\textit{Axioms}:
\begin{itemize}
\item[(i)] the tautologies of classical propositional logic;
\item[(ii)] $\Box_i (\varphi\to \psi)\to (\Box_i \varphi\to \Box_i \psi)$;
\item[(iii)] $ \Box_i ( \Box_i \varphi\to \varphi)\to \Box_i \varphi$;
\item[(iv)] $\Diamond_i \varphi \to \Box_{i+1} \Diamond_i \varphi $;
\item[(v)] $\Box_i \varphi \to \Box_{i+1} \varphi $.
\end{itemize}

\textit{Inference rules}:
\begin{gather*}
\AXC{$\varphi$}
\AXC{$\varphi\to \psi$}
\LeftLabel{$\mathsf{mp}$}
\RightLabel{ ,}
\BIC{$\psi$}
\DisplayProof\qquad
\AXC{$\varphi$}
\LeftLabel{$\mathsf{nec}$}
\RightLabel{ .}
\UIC{$\Box_0 \varphi$}
\DisplayProof 
\end{gather*}
We remark that transitivity of the modal connectives $\Box_i$ is provable in $\mathsf{GLP}$, i.e. $\mathsf{GLP}\vdash \Box_i \psi \to \Box_i \Box_i \psi $ for any formula $\psi$ and any $i \in \mathbb{N}$.

A \emph{cyclic derivation} is a pair $(\kappa , d)$, where $\kappa$ is a finite tree of formulas of $\mathsf{GLP}$ constructed according to the inference rules ($\mathsf{mp}$) and ($\mathsf{nec}$), and $d$ is a function satisfying the following conditions:
the function $d$ is defined at some leaves of $\kappa$; the image $d(a)$ of a leaf $a$ lies on the path from the root of $\kappa$ to the leaf $a$ and is not equal to $a$; the path from $d(a)$ to $a$ intersects an application of the rule ($\mathsf{nec}$); $a$ and $d(a)$ are marked by the same formulas.
If the function $d$ is defined at a leaf $a$, then we say that nodes $a$ and $d(a)$ are connected by a \textit{back-link}. 

\begin{example}\label{example1} Consider the following cyclic derivation of a formula $\varphi$:
\begin{gather*}
\AXC{\tikzmark{E} $\varphi$}
\LeftLabel{\textsf{nec}} 
\UIC{$\Box_0  \varphi$}
\AXC{$ \Box_0 \varphi \rightarrow \varphi$}
\LeftLabel{\textsf{mp}}
\RightLabel{ .}
\BIC{\tikzmark{F} $\varphi$}
\DisplayProof 
\begin{tikzpicture}[overlay, remember picture, >=latex, distance=-3.0cm]
    \draw[->, thick] (pic cs:E) to [out=-25,in=20] (pic cs:F);
 \end{tikzpicture}
\end{gather*} 
Note that there exists an application of the rule ($\mathsf{nec}$) between two nodes connected by the unique back-link.
\end{example}

An \emph{assumption leaf} of a cyclic derivation $\delta = (\kappa, d)$ is a leaf that is not marked by an axiom of $\mathsf{GLP}$ and is not connected by a back-link. An assumption leaf is called \emph{boxed (local)} if the path from the root of $\kappa$ to the given leaf intersects an applications of the rule ($\mathsf{nec}$) or contains a node of the form $d(b)$ for some leaf $b$ (if there are no applications of the rule ($\mathsf{nec}$) on the path from the root of $\kappa$ to the given leaf). 

\begin{definition}
We put $\Sigma ;\Gamma \vdash_\textsf{cycl} \varphi$ if there is a cyclic derivation $\delta$ with the root marked by $\varphi$ in which all boxed assumption leaves are marked by some elements of $\Sigma$ and all local assumption leaves are marked by some elements of $\Gamma$. We also set  $\Sigma ;\Gamma \vdash \varphi$ if  $\Sigma ;\Gamma \vdash_\textsf{cycl} \varphi$ and the corresponding cyclic derivation does not contain back-links.
\end{definition}

Note that $\Sigma ;\Gamma \vdash \varphi$ if and only if $\mathsf{GLP} \vdash \bigwedge \Gamma^\prime \wedge \Box_0 \bigwedge \Sigma^\prime \to \varphi$ for some finite subsets $\Gamma^\prime$ of $\Gamma$ and $\Sigma^\prime$ of $\Sigma$. Hence, standard local and global derivability relations for $\mathsf{GLP}$ can be obtained from the relation $\vdash$ as follows:
\[\Gamma  \vdash_l \varphi \Longleftrightarrow \emptyset ;\Gamma  \vdash \varphi,\qquad\qquad
\Gamma  \vdash_g \varphi \Longleftrightarrow \Gamma ;\Gamma  \vdash \varphi.
\]

Let us denote the set of all local assumption leaves of a 
cyclic derivation $\delta$ by $\mathit{LA}(\delta)$ and the set of all boxed assumption leaves of $\delta$ by $\mathit{BA}(\delta)$.
\begin{lem}
Suppose $\delta = (\kappa, d)$ is  a cyclic derivation of a formula $\varphi$. Then
\[\mathsf{GLP} \vdash \bigwedge \lbrace  \psi_a \mid a \in \mathit{LA}(\delta) \rbrace \wedge \bigwedge \lbrace \Box_0 \psi_a \mid a \in \textit{BA}(\delta) \rbrace \rightarrow \varphi ,\] 
where, for each leaf $a$, $\psi_a$ is the formula of $a$.
\end{lem}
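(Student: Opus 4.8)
The natural approach is induction on the structure (height) of the finite tree $\kappa$ underlying the cyclic derivation $\delta = (\kappa, d)$, but with a twist to handle the back-links: I would instead induct on the number of applications of the rule $(\mathsf{nec})$ together with the size of $\kappa$, or equivalently induct on $\kappa$ after first \emph{unravelling} the back-links one layer at a time. Concretely, I would first reduce to the case of an ordinary (back-link-free) derivation. Given $\delta$ with back-links, pick a back-link from a leaf $a$ to an ancestor $d(a)$; the subderivation $\delta'$ rooted at $d(a)$, with the leaf $a$ now treated as an assumption leaf carrying the formula $\psi_{d(a)} = \psi_a$, is again a cyclic derivation but with one fewer back-link. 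By the definition of cyclic derivation, the path from $d(a)$ to $a$ crosses an application of $(\mathsf{nec})$, so $a$ is a \emph{boxed} assumption leaf of $\delta'$. Applying the claim (as an inductive hypothesis on the number of back-links) to $\delta'$ gives
\[
\mathsf{GLP} \vdash \Box_0\psi_{d(a)} \wedge \bigwedge\{\psi_b \mid b \in \mathit{LA}(\delta')\} \wedge \bigwedge\{\Box_0\psi_b \mid b \in \mathit{BA}(\delta')\} \to \psi_{d(a)}.
\]
Here $\mathit{LA}(\delta')$ and $\mathit{BA}(\delta')$ are (up to the extra leaf $a$) subsets of $\mathit{LA}(\delta) \cup \mathit{BA}(\delta)$. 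Now one uses the Löb-style axiom (iii), $\Box_0(\Box_0\chi \to \chi) \to \Box_0\chi$: from a $\mathsf{GLP}$-proof of $\Box_0\theta \wedge \chi_0 \to \theta$ (where $\chi_0$ collects the other conjuncts), apply $(\mathsf{nec})$ and normality to derive $\Box_0(\Box_0\theta \wedge \Box_0\chi_0' \to \theta)$ where each conjunct $\psi_b$ in $\chi_0$ is replaced by $\Box_0\psi_b$ using $(\mathsf{nec})$, then peel off $\Box_0$ via the Löb axiom. This shows $\mathsf{GLP} \vdash \chi_0^{\Box} \to \theta$ where all assumptions have become boxed — which is exactly what is needed to absorb the back-link, since the back-link path crossed a $(\mathsf{nec})$. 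Iterating removes all back-links.

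With back-links gone, I am left to prove the statement for an ordinary derivation $\delta$, now by straightforward induction on the height of $\kappa$. The base case: $\kappa$ is a single node, either an axiom of $\mathsf{GLP}$ (then $\mathit{LA}(\delta) = \mathit{BA}(\delta) = \emptyset$ and $\mathsf{GLP} \vdash \varphi$), or an assumption leaf — local (so $\varphi \in \{\psi_a\}$, conclusion is $\psi_a \to \varphi$, trivial) or boxed, but a height-$0$ derivation has no $(\mathsf{nec})$ on the root-to-leaf path and no nodes of the form $d(b)$, so this subcase is vacuous. Inductive step: the root is obtained by $(\mathsf{mp})$ from subderivations of $\varphi_1$ and $\varphi_1 \to \varphi$, or by $(\mathsf{nec})$ from a subderivation of $\psi$ with $\varphi = \Box_0\psi$. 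In the $(\mathsf{mp})$ case, the local/boxed assumption leaves of the whole derivation are the unions of those of the two subderivations (the classification of a leaf depends only on the path to the root, which is unchanged), so combining the two IH conclusions by propositional logic gives the result. In the $(\mathsf{nec})$ case is where the local/boxed distinction does real work: every leaf of the subderivation has the $(\mathsf{nec})$ application on its root path, so \emph{all} assumption leaves of the subderivation become boxed in $\delta$ (i.e. $\mathit{LA}(\delta) = \emptyset$ and $\mathit{BA}(\delta)$ is the set of all assumption leaves of the subderivation). From the IH, $\mathsf{GLP} \vdash \bigwedge_b \psi_b \wedge \bigwedge_{b'}\Box_0\psi_{b'} \to \psi$; apply $(\mathsf{nec})$ and distribute $\Box_0$ over $\to$ and $\wedge$ using axiom (ii) to obtain $\mathsf{GLP} \vdash \bigwedge_b \Box_0\psi_b \wedge \bigwedge_{b'}\Box_0\Box_0\psi_{b'} \to \Box_0\psi$, and finally use the provable transitivity $\mathsf{GLP} \vdash \Box_0\psi_{b'} \to \Box_0\Box_0\psi_{b'}$ (noted in the excerpt) to conclude $\mathsf{GLP} \vdash \bigwedge_b \Box_0\psi_b \wedge \bigwedge_{b'}\Box_0\psi_{b'} \to \Box_0\psi = \varphi$, as required.

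The main obstacle is the back-link elimination step: one must verify carefully that (a) the formula labels propagate correctly so that the leaf $a$ really carries $\psi_{d(a)}$, (b) the path condition (crossing a $(\mathsf{nec})$) guarantees that after cutting at $d(a)$ the reattached leaf is classified as boxed rather than local, and (c) the Löb-axiom manipulation legitimately converts a derivability statement with $\Box_0\psi_{d(a)}$ among the hypotheses into one without it, i.e. that one genuinely gets $\mathsf{GLP} \vdash (\text{boxed hypotheses}) \to \psi_{d(a)}$. Point (c) is the heart of why the $(\mathsf{nec})$-crossing condition in the definition of cyclic derivation is exactly the right one — it is what makes the Löb axiom applicable. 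A clean way to organize all of this is to prove a single strengthened statement by induction on (number of back-links, height of $\kappa$) lexicographically, which merges the two inductions above into one; I would present it that way to avoid repeating the case analysis.
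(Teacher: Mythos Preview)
Your overall strategy matches the paper's --- use the L\"ob axiom to discharge back-links, and handle ordinary derivations by tree induction, with the $(\mathsf{nec})$ case using normality and provable transitivity of $\Box_0$. The $(\mathsf{mp})$ and $(\mathsf{nec})$ cases and the L\"ob manipulation in your point (c) are correct. However, your back-link elimination step has a gap. When you pass to the subderivation $\delta'$ rooted at $d(a)$, other back-links of $\delta$ may have their leaf inside $\delta'$ but their target strictly above $d(a)$; such back-links are broken in $\delta'$, so $\delta'$ need not be a cyclic derivation and the inductive hypothesis does not apply. Even granting the conclusion for $\delta'$, you never explain how a derivability statement for $\psi_{d(a)}$ yields one for $\varphi$: the assumption leaves of $\delta'$ must be related back to $\mathit{LA}(\delta)$ and $\mathit{BA}(\delta)$, and the portion of $\delta$ between the root and $d(a)$ (possibly containing further back-links) still has to be processed.

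The paper avoids both problems by organising the induction differently. It inducts on the sum (number of nodes $+$ number of back-links) and splits according to whether some back-link targets the \emph{root}. If one does, erase that back-link: the freed leaf is boxed and carries $\varphi$ itself, so the L\"ob trick applies directly to the whole derivation (no plugging back), and the back-link count drops. If none does, every back-link target is a proper descendant of the root, so the immediate subderivations at the root's children are valid cyclic derivations and one recurses with fewer nodes. Your closing suggestion of a lexicographic induction on (back-links, height) is in the right spirit, but it only goes through if the case split is ``back-link to the root vs.\ none to the root'' rather than ``some back-link vs.\ no back-links''.
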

\begin{proof}
Assume there is a cyclic derivation $\delta = (\kappa, d)$ with the root marked by $\varphi$. We prove the assertion of the lemma by induction on the sum of the number of nodes in $\delta$ and the number of back-links. 

Case 1. Suppose that there are no leaves of $\delta$ connected by back-links with the root. 

Subcase 1A. If $\delta$ consists only of one leaf, then the required assertion holds immediately. 

Subcase 1B. Suppose $\delta$ has the form
\begin{gather*}
\AXC{$\delta^\prime$}
\noLine
\UIC{\vdots}
\noLine
\UIC{$\eta$}
\AXC{$\delta^{\prime\prime}$}
\noLine
\UIC{\vdots}
\noLine
\UIC{$\eta \rightarrow \varphi$}
\LeftLabel{\textsf{mp}}
\RightLabel{ ,}
\BIC{$\varphi$}
\DisplayProof 
\end{gather*}
where $\delta^\prime$ and $\delta^{\prime\prime}$ are cyclic derivations of $\eta$ and $\eta \rightarrow \varphi$ respectively.
Applying the induction hypothesis for $\delta^\prime$ and $\delta^{\prime\prime}$, we obtain
\begin{gather*}
\mathsf{GLP} \vdash \bigwedge \lbrace  \psi_a \mid a \in \mathit{LA}(\delta^\prime) \rbrace \wedge \bigwedge \lbrace \Box_0 \psi_a \mid a \in \textit{BA}(\delta^\prime) \rbrace \rightarrow \eta  ,\\
\mathsf{GLP} \vdash \bigwedge \lbrace  \psi_a \mid a \in \mathit{LA}(\delta^{\prime\prime}) \rbrace \wedge \bigwedge \lbrace \Box_0 \psi_a \mid a \in \textit{BA}(\delta^{\prime\prime}) \rbrace \rightarrow (\eta \rightarrow \varphi) .
\end{gather*}
Since $\mathit{LA}(\delta) = \mathit{LA}(\delta^\prime) \cup \mathit{LA}(\delta^{\prime\prime})$ and $\textit{BA}(\delta) = \textit{BA}(\delta^\prime) \cup \textit{BA}(\delta^{\prime\prime})$, we obtain 
\[\mathsf{GLP} \vdash \bigwedge \lbrace  \psi_a \mid a \in \mathit{LA}(\delta) \rbrace \wedge \bigwedge \lbrace \Box_0 \psi_a \mid a \in \textit{BA}(\delta) \rbrace \rightarrow (\eta \wedge (\eta \rightarrow \varphi)) .
\]
Consequently, 
\[\mathsf{GLP} \vdash \bigwedge \lbrace  \psi_a \mid a \in \mathit{LA}(\delta) \rbrace \wedge \bigwedge \lbrace \Box_0 \psi_a \mid a \in \textit{BA}(\delta) \rbrace \rightarrow    \varphi .
\]
Subcase 1C. Suppose $\delta$ has the form
\begin{gather*} 
\AXC{$\delta^\prime$}
\noLine
\UIC{\vdots}
\noLine
\UIC{$\eta$}
\LeftLabel{\textsf{nec}}
\RightLabel{ ,}
\UIC{$\Box_0 \eta$}
\DisplayProof 
\end{gather*}
where $\Box_0 \eta =\varphi$ and $\delta^\prime$ is a cyclic derivations of $\eta$.
By the induction hypothesis for $\delta^\prime$, we have
\begin{gather*}
\mathsf{GLP} \vdash \bigwedge \lbrace  \psi_a \mid a \in \mathit{LA}(\delta^\prime) \rbrace \wedge \bigwedge \lbrace \Box_0 \psi_a \mid a \in \textit{BA}(\delta^\prime) \rbrace \rightarrow    \eta  .
\end{gather*}
Therefore,
\begin{gather*}
\mathsf{GLP} \vdash \bigwedge \lbrace  \Box_0\psi_a \mid a \in \mathit{LA}(\delta^\prime) \rbrace \wedge \bigwedge \lbrace \Box_0 \Box_0 \psi_a \mid a \in \textit{BA}(\delta^\prime) \rbrace \rightarrow \Box_0\eta ,\\
\mathsf{GLP} \vdash \bigwedge \lbrace  \Box_0\psi_a \mid a \in \mathit{LA}(\delta^\prime) \rbrace \wedge \bigwedge \lbrace \Box_0 \psi_a \mid a \in \textit{BA}(\delta^\prime) \rbrace \rightarrow \Box_0\eta .
\end{gather*}
Since $\textit{BA}(\delta)= \mathit{LA}(\delta^\prime) \cup \textit{BA}(\delta^\prime) $, we obtain  
\begin{gather*}
\mathsf{GLP} \vdash \bigwedge \lbrace \Box_0 \psi_a \mid a \in \textit{BA}(\delta) \rbrace \rightarrow\varphi .
\end{gather*}
Note that, in this subcase, $\textit{LA}(\delta)=\emptyset$.

Case 2. Suppose there is a leaf $b$ connected by a back-link with the root of $\delta$. Consider the cyclic derivation $\delta^\prime$ obtained from $\delta$ by erasing the back-link. Since the path from the root of $\delta$ to the node $b$ intersects an application of the rule (\textsf{nec}), we see $b\in \textit{BA}(\delta^\prime)$. 
By the induction hypothesis for $\delta^\prime$, we have
\[\mathsf{GLP} \vdash (\bigwedge \lbrace \psi_a \mid a \in \textit{LA}(\delta^\prime) \rbrace \wedge \bigwedge \lbrace \Box_0 \psi_a \mid a \in \textit{BA}(\delta^\prime)\setminus\{b\} \rbrace \wedge \Box_0 \varphi) \rightarrow \varphi .\]
Hence,  
\begin{gather}
\mathsf{GLP} \vdash \bigwedge \lbrace \psi_a \mid a \in \textit{LA}(\delta^\prime) \rbrace \wedge \bigwedge \lbrace \Box_0 \psi_a \mid a \in \textit{BA}(\delta^\prime)\setminus\{b\} \rbrace \rightarrow (\Box_0 \varphi \rightarrow \varphi), \label{form2} \\
\mathsf{GLP} \vdash \bigwedge \lbrace \Box_0\psi_a \mid a \in \textit{LA}(\delta^\prime) \rbrace \wedge \bigwedge \lbrace \Box_0\Box_0 \psi_a \mid a \in \textit{BA}(\delta^\prime)\setminus\{b\} \rbrace \rightarrow \Box_0 (\Box_0 \varphi \rightarrow \varphi), \nonumber \\
\mathsf{GLP} \vdash \bigwedge \lbrace \Box_0\psi_a \mid a \in \textit{LA}(\delta^\prime) \rbrace \wedge \bigwedge \lbrace \Box_0 \psi_a \mid a \in \textit{BA}(\delta^\prime)\setminus\{b\} \rbrace \rightarrow \Box_0 (\Box_0 \varphi \rightarrow \varphi) \nonumber .
\end{gather}
Since $\mathsf{GLP} \vdash \Box_0 (\Box_0 \varphi \rightarrow \varphi) \rightarrow \Box_0 \varphi$, we see
\begin{gather} \label{form3} 
\mathsf{GLP} \vdash \bigwedge \lbrace \Box_0\psi_a \mid a \in \textit{LA}(\delta^\prime) \rbrace \wedge \bigwedge \lbrace \Box_0 \psi_a \mid a \in \textit{BA}(\delta^\prime)\setminus\{b\} \rbrace \rightarrow \Box_0 \varphi .
\end{gather}
From (\ref{form3}) and (\ref{form2}), we obtain
\[\mathsf{GLP} \vdash \bigwedge \lbrace \boxdot_0\psi_a \mid a \in \textit{LA}(\delta^\prime) \rbrace \wedge \bigwedge \lbrace \Box_0 \psi_a \mid a \in \textit{BA}(\delta^\prime)\setminus\{b\} \rbrace \rightarrow \varphi,\]
where $\boxdot_0 \psi\coloneq \psi \wedge \Box_0 \psi$. 

Since the leaf $b$ is connected by a back-link with the root of $\delta$, the set $\textit{BA}(\delta)$ consists of all assumption leaves of $\delta$. Therefore, $\textit{LA} (\delta^\prime)\cup ( \textit{BA}(\delta^\prime)\setminus \{b\}) \subset \textit{BA}(\delta)$. Also,  $\textit{LA}(\delta^\prime)=\textit{LA}(\delta)$. We conclude that 
\[\mathsf{GLP} \vdash \bigwedge \lbrace  \psi_a \mid a \in \mathit{LA}(\delta) \rbrace \wedge \bigwedge \lbrace \Box_0 \psi_a \mid a \in \textit{BA}(\delta) \rbrace \rightarrow    \varphi .\]
\end{proof}

\begin{thm}
For any sets of formulas $\Sigma$ and $\Gamma$, and any formula $\varphi$, we have 
\[\Sigma ;\Gamma \vdash \varphi \Longleftrightarrow \Sigma
 ;\Gamma \vdash_\mathsf{cycl} \varphi.\]
\end{thm}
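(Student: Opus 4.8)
The plan is to prove the two implications separately, with the left-to-right direction being immediate and the right-to-left direction reducing to the Lemma just proved.

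For $\Sigma;\Gamma\vdash\varphi\Rightarrow\Sigma;\Gamma\vdash_\mathsf{cycl}\varphi$, I would simply observe that a derivation witnessing $\Sigma;\Gamma\vdash\varphi$ is by definition already a cyclic derivation, namely one without back-links, and that the conditions imposed on boxed and local assumption leaves are verbatim the same in the definitions of $\vdash$ and $\vdash_\mathsf{cycl}$. Hence the very same object witnesses $\Sigma;\Gamma\vdash_\mathsf{cycl}\varphi$.

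For the converse, suppose $\Sigma;\Gamma\vdash_\mathsf{cycl}\varphi$ and fix a witnessing cyclic derivation $\delta=(\kappa,d)$ with root labelled by $\varphi$, so that $\psi_a\in\Gamma$ for every $a\in\mathit{LA}(\delta)$ and $\psi_a\in\Sigma$ for every $a\in\mathit{BA}(\delta)$. Applying the Lemma to $\delta$ yields
\[\mathsf{GLP}\vdash\bigwedge\{\psi_a\mid a\in\mathit{LA}(\delta)\}\wedge\bigwedge\{\Box_0\psi_a\mid a\in\mathit{BA}(\delta)\}\to\varphi.\]
Setting $\Gamma'=\{\psi_a\mid a\in\mathit{LA}(\delta)\}$ and $\Sigma'=\{\psi_a\mid a\in\mathit{BA}(\delta)\}$, which are finite subsets of $\Gamma$ and $\Sigma$ respectively (with an empty conjunction read as $\top$), and using the normality of $\Box_0$ in $\mathsf{GLP}$ to replace $\bigwedge\{\Box_0\psi\mid\psi\in\Sigma'\}$ by $\Box_0\bigwedge\Sigma'$, I obtain $\mathsf{GLP}\vdash\bigwedge\Gamma'\wedge\Box_0\bigwedge\Sigma'\to\varphi$. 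By the characterization of $\vdash$ recorded immediately after its definition, this gives $\Sigma;\Gamma\vdash\varphi$, as required.

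The real content, and hence the main obstacle, is entirely absorbed into the Lemma, whose proof by induction on the number of nodes of the cyclic derivation plus the number of back-links is where the L\"ob axiom is invoked to discharge back-links; granted that, the theorem itself is a short bookkeeping argument. The only point needing a moment's care is confirming that the leaves whose formulas the Lemma places under $\Box_0$ (that is, $\mathit{BA}(\delta)$) are precisely the leaves the definition of $\vdash_\mathsf{cycl}$ forces to be labelled by elements of $\Sigma$, and similarly for $\mathit{LA}(\delta)$ and $\Gamma$; this, however, is immediate from the definitions of boxed and local assumption leaf.
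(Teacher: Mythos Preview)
Your proof is correct and follows essentially the same approach as the paper: the left-to-right direction is immediate, and the right-to-left direction is obtained by applying the Lemma to a witnessing cyclic derivation and then reading off $\Sigma;\Gamma\vdash\varphi$ from the resulting $\mathsf{GLP}$-theorem. The only cosmetic difference is that the paper concludes by noting $\Sigma;\Gamma\vdash\bigwedge\{\psi_a\mid a\in\mathit{LA}(\delta)\}\wedge\bigwedge\{\Box_0\psi_a\mid a\in\mathit{BA}(\delta)\}$ directly, whereas you invoke the finite-subset characterization of $\vdash$; these are interchangeable.
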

\begin{proof}
The left-to-right implication holds immediately. We shall prove the converse.

Assume $ \Sigma
 ;\Gamma\vdash_\textsf{cycl} \varphi$. Then there is a cyclic derivation $\delta$ of a formula $\varphi$ such that $\mathit{LA}(\delta)\subset \Gamma$ and $\mathit{BA}(\delta)\subset \Sigma$. By the previous lemma, we have
\[\mathsf{GLP} \vdash \bigwedge \lbrace \psi_a \mid a \in \mathit{LA}(\delta) \rbrace \wedge \bigwedge \lbrace \Box_0 \psi_a \mid a \in \textit{BA}(\delta) \rbrace \rightarrow \varphi.\]
Trivially, 
\[\Sigma; \Gamma \vdash  \bigwedge \lbrace \psi_a \mid a \in \mathit{LA}(\delta) \rbrace \wedge \bigwedge \lbrace \Box_0 \psi_a \mid a \in \textit{BA}(\delta) \rbrace.\]
Consequently, $\Sigma; \Gamma \vdash \varphi$.
\end{proof}





\section{Infinitary derivations in $\mathsf{GLP}$}
\label{s1.5}
{In this section, we introduce two families of infinitary derivations in the Frege-Hilbert calculus for $\mathsf{GLP}$, $\infty$- and $\omega$-derivations. We prove that the derivability relations defined by these two families are equal.} 
 
A \emph{non-well-founded derivation,} or an \emph{$\infty$-derivation,} is a (possibly infinite) tree whose nodes are marked by formulas of $\mathsf{GLP}$ and that is constructed according to the rules ($\mathsf{mp}$) and ($\mathsf{nec}$). In addition, any infinite branch in an $\infty$-derivation must contain infinitely many applications of the rule ($\mathsf{nec}$). 

\begin{example}\label{example2} Consider the following $\infty$-derivation of a formula $\varphi_0$:
\begin{gather*}
\AXC{$\vdots$}
\noLine
\UIC{$\varphi_3$}
\LeftLabel{\textsf{nec}} 
\UIC{$\Box_0  \varphi_3$}
\AXC{$ \Box_0 \varphi_3 \rightarrow \varphi_2$}
\LeftLabel{\textsf{mp}}
\BIC{$\varphi_2$}
\LeftLabel{\textsf{nec}} 
\UIC{$\Box_0  \varphi_2$}
\AXC{$ \Box_0 \varphi_2 \rightarrow \varphi_1$}
\LeftLabel{\textsf{mp}}
\BIC{$\varphi_1$}
\LeftLabel{\textsf{nec}} 
\UIC{$\Box_0  \varphi_1$}
\AXC{$ \Box_0 \varphi_1 \rightarrow \varphi_0$}
\LeftLabel{\textsf{mp}}
\RightLabel{ .}
\BIC{$\varphi_0$}
\DisplayProof 
\end{gather*} 
Note that the unique infinite branch contains infinitely many applications of the rule ($\mathsf{nec}$).
\end{example}

Cyclic derivations considered in the {previous section} can be understood as a special case of $\infty$-derivations. The exact connection is as follows. An $\infty$-derivation is called \emph{regular} if it contains only finitely many non-isomorphic subtrees.
The unravelling of a cyclic derivation is obviously a regular $\infty$-derivation. {In addition, it is easy to prove the converse.}
\begin{prop}
Any regular $\infty$-derivation can be obtained by unravelling of a cyclic one.
\end{prop}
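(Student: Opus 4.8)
The plan is to describe a procedure that, given a regular $\infty$-derivation $\pi$, produces a finite tree $\kappa$ together with a back-link function $d$ whose unravelling is (isomorphic to) $\pi$. The key observation is that regularity means the set of subtrees of $\pi$, taken up to isomorphism, is finite; fix a set $T_1, \dots, T_n$ of representatives. Since each $T_i$ is itself a subtree of $\pi$, it is obtained by applying one of the rules $(\mathsf{mp})$ or $(\mathsf{nec})$ (or is a leaf) to premises which are again, up to isomorphism, among the $T_j$. So we get a finite ``system'' of subtrees closed under taking immediate subderivations.

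First I would build the finite tree $\kappa$ by a top-down traversal of $\pi$ (equivalently, of the root representative $T_{i_0}$ where $T_{i_0} \cong \pi$), maintaining a record of which representatives $T_i$ have already been encountered strictly above the current node on the current branch. When we reach a node $a$ whose subtree is isomorphic to some $T_i$ that has already appeared at an ancestor node $b$ on the path from the root to $a$ \emph{and} the path from $b$ to $a$ crosses an application of $(\mathsf{nec})$, we stop expanding: we make $a$ a leaf, set $d(a) := b$, and label $a$ with the same formula as $b$. Otherwise we expand $a$ according to whichever rule produced the corresponding subtree of $\pi$. Because there are only $n$ representatives, along any branch at most $n$ of them can occur without a repeat, so after finitely many steps along every branch a back-link is forced; hence $\kappa$ is finite. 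The condition that the path from $d(a)$ to $a$ crosses a $(\mathsf{nec})$ is guaranteed: if $T_i$ repeats along a branch of $\pi$ without any intervening $(\mathsf{nec})$, then by König's lemma applied to the finitely many representatives one produces an infinite branch of $\pi$ with only finitely many $(\mathsf{nec})$ applications, contradicting the defining condition on $\infty$-derivations — so we may always choose the ancestor $b$ so that a $(\mathsf{nec})$ occurs in between. Then I would verify $(\kappa,d)$ satisfies the definition of a cyclic derivation (the rule constraints on $\kappa$ are inherited from $\pi$, and the three conditions on $d$ hold by construction) and that its unravelling is $\pi$: unravelling undoes exactly the ``stop and link'' steps, reproducing the infinite tree, and one checks by a straightforward coinduction / branch-by-branch argument that the result is isomorphic to $\pi$.

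The main obstacle I expect is making the $(\mathsf{nec})$-crossing requirement precise and ensuring it can always be met simultaneously with closing off the tree in finitely many steps. Concretely, one must argue that if a representative $T_i$ occurs infinitely often down some branch then it occurs infinitely often \emph{with} $(\mathsf{nec})$ applications between consecutive occurrences — otherwise the ``tail'' of that branch would be $(\mathsf{nec})$-free and infinite — and that this lets us always pick the linking ancestor $b$ correctly. A secondary, more bookkeeping-style obstacle is the formal definition of ``isomorphic subtrees'' and of the unravelling operation in a way that makes ``the unravelling of $(\kappa,d)$ is $\pi$'' a checkable statement; since the excerpt treats unravelling only informally, I would state these definitions just carefully enough to run the argument, then keep the verification brief.
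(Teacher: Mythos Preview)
Your proposal is correct and follows essentially the same route as the paper: use regularity to bound the number of subtree-isomorphism classes, find a repeat along each sufficiently long branch by pigeonhole, and argue that any such repeat must straddle a $(\mathsf{nec})$ application (otherwise iterating the isomorphism $\delta_b\cong\delta_c$ produces an infinite $(\mathsf{nec})$-free branch). The paper's version is slightly more direct---it simply cuts every path at depth $m$ and observes that the $(\mathsf{nec})$-crossing is automatic for \emph{every} repeat, so there is no need to condition the back-link construction on it or to invoke K\"onig's lemma.
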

\begin{proof}
Assume we have a regular $\infty$-derivation $\delta$. Note that every node $a$ of the tree $\delta$ determines the subtree $\delta_a$ with the root $a$. Let $m$ denote the number of non-isomorphic subtrees
of $\delta$. Consider any {simple path} $a_0, a_1, \dotsc , a_m$ in $\delta$ that starts at the root of $\delta$ and has length $m $. This path defines the sequence of subtrees $\delta_{a_0},\delta_{a_1}, \dotsc,\delta_{a_m}$. Since $\delta$ contains precisely $m$ non-isomorphic subtrees, the path contains a pair of different nodes $b$ and $c$ determining isomorphic subtrees $\delta_b$ and $\delta_c$. Without loss of generality, assume that $c$ is farther from the root than $b$. Note that the path from $b$ to $c$ intersects an application of the rule ($\mathsf{nec}$) since otherwise there is an infinite branch in $\delta$ violating the condition on infinite branches of $\infty$-derivations. We cut the path $a_0, a_1, \dotsc , a_m$ at the node $c$ and connect $c$, which has become a leaf, with $b$ by a back-link. By applying a similar operation to each of the remaining paths of length $m $ that start at the root, we ravel $\delta$ into the desired cyclic derivation.

\end{proof}

Let us introduce another family of infinitary derivations in the Frege-Hilbert calculus for $\mathsf{GLP}$. An \emph{$\omega$-derivation} is a well-founded tree of formulas of $\mathsf{GLP}$ that is constructed according to the inference rules ($\mathsf{mp}$), ($\mathsf{nec}$) and the following rule:
\[
\AXC{$\Box_0\varphi_{1} \rightarrow \varphi_0$}
\AXC{$\Box_0\varphi_{2} \rightarrow \varphi_1$}
\AXC{$\Box_0\varphi_{3} \rightarrow \varphi_2$}
\AXC{$\dots$}
\LeftLabel{$\omega$}
\RightLabel{ .}
\QIC{$ \varphi_0$}
\DisplayProof 
\] 
In this inference, all premises except the leftmost one are called \emph{boxed}. 

An \emph{assumption leaf} of an $\infty$-derivation ($\omega$-derivation) is a leaf that is not marked by an axiom of $\mathsf{GLP}$. 

\begin{definition}\label{def2}
We set $\Gamma \Vdash_\infty \varphi$ ($\Gamma \Vdash_\omega \varphi$) if there is an $\infty$-derivation ($\omega$-derivation) with the root marked by $\varphi$ in which all assumption leaves are marked by some elements of $\Gamma$. 
\end{definition}

The \emph{main fragment} of an $\infty$-derivation ($\omega$-derivation) is a finite tree obtained from this derivation by cutting every branch at the nearest to the root premise of the rule ($\mathsf{nec}$) (at the nearest to the root premise of the rule ($\mathsf{nec}$) or boxed premise of the rule ($\omega$)). 
The \emph{local height $\lvert \delta \rvert$ of an $\infty$-derivation ($\omega$-derivation) $\delta$} is the length of the longest branch in its main fragment. An $\infty$-derivation ($\omega$-derivation) consisting of a single node has height $0$.

Note that the local height of the $\infty$-derivation from Example \ref{example2} equals to $1$ and its main fragment has the form
\begin{gather*}
\qquad\qquad
\AXC{$\Box_0 \varphi_1\qquad\qquad$}
\AXC{$\Box_0 \varphi_1 \to \varphi_0$}
\LeftLabel{$\mathsf{mp}$}
\RightLabel{ .}
\BIC{$\varphi_0$}
\DisplayProof 
\end{gather*}
The main fragment of the $\omega$-derivation consisting of a single application of the rule ($\omega$) has the form
\[
\AXC{$\Box_0\varphi_{1} \rightarrow \varphi_0$}
\AXC{$\qquad$}
\AXC{$\qquad$}
\AXC{$\qquad\qquad$}
\RightLabel{ ,}
\QIC{$ \varphi_0$}
\DisplayProof 
\] 
and its local height is also equal to $1$.
\begin{lem}\label{global inf <-> global omega}
For any set of formulas $\Gamma$ and any formula $\varphi$, we have 
\[\Gamma \Vdash_{\infty} \varphi \Longleftrightarrow \Gamma \Vdash_{\omega} \varphi .\]
\end{lem}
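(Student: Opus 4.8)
The plan is to prove the two implications separately, converting an $\infty$-derivation into an $\omega$-derivation and vice versa, by structural recursion on the main fragment in each case.

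\textbf{From $\omega$ to $\infty$.} Suppose $\Gamma \Vdash_\omega \varphi$ via an $\omega$-derivation $\delta$. I would argue by induction on the well-founded tree $\delta$ (or on its local height together with a secondary induction) that $\delta$ can be transformed into an $\infty$-derivation with the same root and the same assumption leaves. The only rule to eliminate is $(\omega)$. An application
\begin{gather*}
\AXC{$\Box_0\varphi_{1} \rightarrow \varphi_0$}
\AXC{$\Box_0\varphi_{2} \rightarrow \varphi_1$}
\AXC{$\dots$}
\LeftLabel{$\omega$}
\TIC{$\varphi_0$}
\DisplayProof
\end{gather*}
is replaced by the infinite rightward-climbing zig-zag of Example \ref{example2}: from $\varphi_{i+1}$ apply $(\mathsf{nec})$ to get $\Box_0 \varphi_{i+1}$, then apply $(\mathsf{mp})$ with the premise $\Box_0\varphi_{i+1} \to \varphi_i$ to obtain $\varphi_i$, iterating upward forever. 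The unique new infinite branch passes through a $(\mathsf{nec})$ at every step, so the side condition on infinite branches of $\infty$-derivations is met; applying the induction hypothesis first to the subderivations feeding each premise $\Box_0\varphi_{i+1}\to\varphi_i$ (and, inside the $(\mathsf{nec})$ step, noting that $\Box_0\varphi_{i+1}$ with an assumption leaf $\varphi_{i+1}$ is fine because $(\mathsf{nec})$ is a legal rule of $\infty$-derivations) finishes this direction. Assumption leaves are preserved because we never introduce a new leaf that is not already a leaf of some premise subderivation.

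\textbf{From $\infty$ to $\omega$.} This is the harder direction and where the main obstacle lies: an $\infty$-derivation is genuinely infinite and has no well-founded induction available directly, so one must extract from it finitely much ``local'' information and push the rest into boxed premises handled recursively. The idea is to induct on the local height $\lvert\delta\rvert$ — the length of the longest branch in the main fragment, which is finite — and, for fixed local height, do a side analysis of the last rule. If the root is obtained by $(\mathsf{mp})$ from $\infty$-derivations of $\eta$ and $\eta\to\varphi$, their local heights are strictly smaller, so the induction hypothesis yields $\omega$-derivations which we recombine by $(\mathsf{mp})$. The delicate case is when the main fragment is an infinite ``staircase'': tracing the main fragment upward from the root $\varphi = \varphi_0$, every branch is cut at the nearest $(\mathsf{nec})$-premise, and the main fragment being finite but the derivation infinite forces the shape to be a finite tree whose relevant leaves are premises $\Box_0\psi$ of $(\mathsf{nec})$-applications; each such $\psi$ heads a subderivation $\delta_\psi$ of strictly smaller local height. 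Here is the key point: the derivation $\delta$ itself, read as deriving $\varphi_0$ from certain $\Box_0\varphi_i \to \varphi_{i-1}$-type data together with the recursively-available subderivations, can be re-presented so that one application of the $(\omega)$ rule captures the infinite regress, with each boxed premise $\Box_0\varphi_{i+1}\to\varphi_i$ supplied by gluing a subderivation obtained from the induction hypothesis under a $(\mathsf{nec})$ followed by $(\mathsf{mp})$. Concretely, one shows the main fragment must contain a cofinal sequence of $(\mathsf{nec})$-applications whose premises $\Box_0\varphi_1, \Box_0\varphi_2, \dots$ are linked by $(\mathsf{mp})$-steps using formulas of the form $\Box_0\varphi_{i+1}\to\varphi_i$ that are themselves derived (with smaller local height) from $\Gamma$; feeding these into the $(\omega)$ rule gives $\varphi_0$.

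\textbf{Main obstacle and how I expect to handle it.} The real work is the bookkeeping in the $\infty\Rightarrow\omega$ direction: arguing that an arbitrary $\infty$-derivation, not just the clean example, decomposes into a main fragment plus subderivations of strictly smaller local height feeding the $(\mathsf{nec})$-premises, and that reassembling these with one $(\omega)$-step (for the infinite part) or with $(\mathsf{mp})/(\mathsf{nec})$ (for the finite part) produces a genuinely well-founded tree. I expect the induction to be on $\lvert\delta\rvert$ with the case distinction on the last rule, using crucially that (i) the main fragment is finite, so only finitely many $(\mathsf{nec})$-premises appear, (ii) each subderivation hanging above a $(\mathsf{nec})$-premise has local height $< \lvert\delta\rvert$, and (iii) the condition that every infinite branch meets $(\mathsf{nec})$ infinitely often is exactly what guarantees the staircase structure and hence the applicability of the $(\omega)$ rule. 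Preservation of the assumption-leaf set in both directions is routine and follows because every transformation only reorganizes existing leaves.
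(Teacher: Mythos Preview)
Your $\omega\Rightarrow\infty$ direction is fine and matches the paper: replace each $(\omega)$-instance by the staircase of Example~\ref{example2}.

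The $\infty\Rightarrow\omega$ direction has a genuine gap. You claim (point (ii) of your last paragraph) that each subderivation hanging above a $(\mathsf{nec})$-premise has local height $<\lvert\delta\rvert$. This is false: in Example~\ref{example2} the local height at $\varphi_0$ is $1$, and the subderivation rooted at $\varphi_1$ is the same staircase, again of local height $1$. Local height measures only the main fragment; crossing a $(\mathsf{nec})$ resets the main fragment rather than shrinking it, so your induction on $\lvert\delta\rvert$ makes no progress at $(\mathsf{nec})$ and does not terminate. A related problem is that a general $\infty$-derivation is not a single staircase: its main fragment is an arbitrary finite $(\mathsf{mp})$-tree with possibly several $(\mathsf{nec})$-conclusions as leaves, and above each of these sits another arbitrary $\infty$-derivation, so there is no canonical sequence $\varphi_0,\varphi_1,\dotsc$ to feed to $(\omega)$.

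The paper manufactures such a sequence by taking conjunctions across \emph{slices}: a node is in slice $n{+}1$ if exactly $n$ applications of $(\mathsf{nec})$ lie between it and the root. Each slice is finite (finite branching plus the infinite-branch condition, via K\"onig), so $\xi_n:=\bigwedge\{\psi_a\mid a\text{ in slice }n{+}1\}$ is a genuine formula. One then proves $\Gamma\Vdash_\omega \Box_0\xi_{n+1}\to\psi_a$ for every $a$ in slice $n{+}1$ by induction on $\lvert\delta_a\rvert$. This induction \emph{does} work, because within a fixed slice only $(\mathsf{mp})$ occurs (so local height strictly drops), while at a $(\mathsf{nec})$-conclusion $\psi_a=\Box_0\psi_b$ the premise $b$ lies in slice $n{+}2$, whence $\psi_b$ is a conjunct of $\xi_{n+1}$ and $\Box_0\xi_{n+1}\to\Box_0\psi_b$ is immediate---no recursion across slices is needed. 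A single application of $(\omega)$ to $(\xi_n)_{n\in\mathbb{N}}$ yields $\xi_0$, and $\varphi$ is a conjunct of $\xi_0$. This slice/conjunction trick is precisely the missing idea in your proposal.
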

\begin{proof}
The right-to-left implication follows from the fact that $ \{\Box_0\varphi_{n+1} \rightarrow \varphi_n\mid n\in \mathbb{N}\} \Vdash_\infty \varphi_0$. Indeed, the formula $\varphi_0$ is derivable from $\{\Box_0\varphi_{n+1} \rightarrow \varphi_n \mid n\in \mathbb{N}\}$, which is shown in Example \ref{example2}. We prove the converse. 

Assume $\delta$ is an $\infty$-derivation with the root marked by $\varphi$ in which all assumption leaves are marked by some elements of $\Gamma$.
Recall that, for any node $a$ of the $\infty$-derivation $\delta$,  $\delta_a$ is the subtree of $\delta$ with the root $a$. We put $r(a)\coloneq\lvert \delta_a \rvert$. In addition, we denote the formula of the node $a$ by $\psi_a$. We say that a node $a$ belongs to the \emph{$(n+1)$-th slice of $\delta$} if there are precisely $n $ applications of the rule ($\mathsf{nec}$) on the path from the node $a$ to the root of $\delta$. By $\xi_n$, we denote the formula $\bigwedge \{\psi_a \mid \text{$a$ belongs to the $(n+1)$-th slice of $\delta$} \}$.
 
We claim that $\Gamma \Vdash_\omega \Box_0 \xi_{n+1} \rightarrow \xi_n$ for any $n \in \mathbb{N}$. It is sufficient to prove that $\Gamma \Vdash_\omega \Box_0 \xi_{n+1} \rightarrow \psi_a$ whenever $a$ belongs to the $(n+1)$-th slice of $\delta$. The proof is by induction on $r(a)$.

If $\psi_a$ is an axiom of $\mathsf{GLP}$ or an element of $\Gamma$, then we immediately obtain the required statement.
Otherwise, $\psi_a$ is obtained by an application of an inference rule in $\delta$.

Case 1. If $\psi_a$ is obtained by the rule ($\mathsf{nec}$), then this formula has the form $\Box_0 \psi_{b}$, where $b$ is the premise of $a$. We see that $b$ belongs to the $(n+2)$-th slice of $\delta$. Consequently, $\Gamma \Vdash_\omega \xi_{n+1}\rightarrow  \psi_b$ and $\Gamma \Vdash_\omega \Box_0 \xi_{n+1} \rightarrow \psi_a$. 

Case 2. Suppose $\psi_a$ is obtained by the rule ($\mathsf{mp}$). Consider the premises $b_1$ and $b_2$ of $a$. We have $r(b_1)< r(a)$ and $r(b_2)< r(a)$. From the induction hypothesis, we obtain $\Gamma \Vdash_\omega \Box_0 \xi_{n+1} \rightarrow (\psi_{b_1} \wedge \psi_{b_2})$. Since $\psi_{b_2} = \psi_{b_1} \rightarrow \psi_a$, we have $\Gamma \Vdash_\omega  \Box_0 \xi_{n+1} \rightarrow \psi_a$.

This proves the claim that $\Gamma \Vdash_\omega \Box_0 \xi_{n+1} \rightarrow \xi_n$ for any $n \in \mathbb{N}$. Applying ($\omega$), we obtain $\Gamma \Vdash_\omega \xi_0$. Besides, $\Gamma \Vdash_\omega \xi_0 \rightarrow \varphi$. We conclude that $\Gamma \Vdash_\omega \varphi$.
\end{proof}

{An assumption leaf of an $\infty$-derivation ($\omega$-derivation) is called \emph{boxed} if the path from the given leaf to the root of the tree intersects an application of the rule ($\mathsf{nec}$) (the path intersects an application of the rule ($\mathsf{nec}$) or an application of the rule ($\omega$) on a boxed
premise). }


\begin{definition}
We set $\Sigma; \Gamma \Vdash_\infty \varphi$ ($\Sigma; \Gamma \Vdash_\omega \varphi$) if there is an $\infty$-derivation ($\omega$-derivation) with the root marked by $\varphi$ in which all boxed assumption leaves are marked by some elements of $\Sigma$ and all non-boxed assumption leaves are marked by some elements of $\Gamma$. 
\end{definition}
Let us note that this definition gives us more general relations than Definition  \ref{def2}:
\[
\Gamma  \Vdash_\infty \varphi \Longleftrightarrow \Gamma ;\Gamma  \Vdash_\infty \varphi,\qquad\qquad
\Gamma  \Vdash_\omega \varphi \Longleftrightarrow \Gamma ;\Gamma  \Vdash_\omega \varphi.
\]

\begin{thm} \label{inf <-> omega}
For any sets of formulas $\Sigma$ and $\Gamma$, and any formula $\varphi$, we have 
\[\Sigma ;\Gamma \Vdash_\infty \varphi \Longleftrightarrow \Sigma ;\Gamma \Vdash_\omega \varphi .\]
\end{thm}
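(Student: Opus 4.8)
The plan is to prove the two implications separately, in each case taking a derivation of one kind and transforming it locally into a derivation of the other kind while keeping track of which assumption leaves are boxed. The key point, compared to Lemma~\ref{global inf <-> global omega}, is that we must now respect the distinction between boxed and non-boxed assumption leaves, so the bookkeeping in the translation has to be done carefully.

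\textbf{From $\omega$ to $\infty$.} First I would handle the right-to-left direction. Suppose $\Sigma;\Gamma \Vdash_\omega \varphi$ via an $\omega$-derivation $\delta$. I would proceed by induction on the well-founded tree $\delta$ (or on its local height together with the structure of its immediate subderivations above the main fragment), replacing each application of the rule~($\omega$) by the infinite ($\mathsf{nec}$)/($\mathsf{mp}$) pattern exhibited in Example~\ref{example2}. Concretely, an application of ($\omega$) with premises $\Box_0\varphi_{n+1}\to\varphi_n$ (for $n\in\mathbb{N}$) and conclusion $\varphi_0$ is turned into the $\infty$-derivation from Example~\ref{example2}; since each premise $\Box_0\varphi_{n+1}\to\varphi_n$ with $n\ge 1$ is a boxed premise of ($\omega$) and gets placed above an application of ($\mathsf{nec}$) in the new tree, boxed assumption leaves stay boxed and non-boxed ones (which can only sit over the leftmost premise $\Box_0\varphi_1\to\varphi_0$, or over a ($\mathsf{nec}$)-free initial segment) keep their status. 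One must also check that the resulting tree is a legal $\infty$-derivation, i.e.\ every infinite branch contains infinitely many ($\mathsf{nec}$): a branch that goes infinitely often through the new ($\omega$)-gadgets picks up a ($\mathsf{nec}$) each time, and a branch that eventually stays inside one subderivation above the main fragment is handled by the induction hypothesis applied to that (strictly smaller) subderivation. This direction is essentially routine.

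\textbf{From $\infty$ to $\omega$.} For the left-to-right direction I would adapt the slicing argument of Lemma~\ref{global inf <-> global omega}. Given an $\infty$-derivation $\delta$ of $\varphi$ with boxed assumption leaves in $\Sigma$ and non-boxed ones in $\Gamma$, define the $(n+1)$-th slice as before (nodes with exactly $n$ applications of ($\mathsf{nec}$) on the path to the root) and $\xi_n := \bigwedge\{\psi_a \mid a \text{ in the }(n{+}1)\text{-th slice}\}$. The crucial refinement is that every assumption leaf in slice~$1$ is non-boxed, hence lies in $\Gamma$, while every assumption leaf in slice~$n+2$ (and more generally in any slice beyond the first) is boxed, hence lies in $\Sigma$. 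So the induction of Lemma~\ref{global inf <-> global omega}, showing $\Gamma \Vdash_\omega \Box_0\xi_{n+1}\to\psi_a$ for $a$ in slice $n+1$, now yields: for $n=0$ the $\omega$-derivation of $\Box_0\xi_1\to\xi_0$ uses only assumptions from $\Gamma$ as non-boxed leaves (the slice-$1$ assumption leaves) and from $\Sigma$ as boxed leaves (everything appearing under a ($\mathsf{nec}$), i.e.\ the $\Box_0\xi_1$ on the left is built up from slice-$\ge 2$ material which is in $\Sigma$); and for $n\ge 1$ every assumption leaf of the $\omega$-derivation of $\Box_0\xi_{n+1}\to\xi_n$ is boxed (the whole thing sits over ($\mathsf{nec}$)s or over a ($\mathsf{nec}$) introduced when reconstructing $\Box_0\xi_{n+1}$) and comes from $\Sigma$. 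Finally assemble: apply the rule ($\omega$) to the family $\{\Box_0\xi_{n+1}\to\xi_n\}_{n\in\mathbb{N}}$ — its leftmost premise is the $n=0$ one, carrying the $\Gamma$-leaves as non-boxed, and all other premises are boxed premises of ($\omega$), so their $\Sigma$-leaves stay boxed — obtaining $\Sigma;\Gamma\Vdash_\omega\xi_0$, and then combine with $\xi_0\to\varphi$ (a ($\mathsf{mp}$)-derivation from the slice-$1$ formulas, all non-boxed, in $\Gamma$) to get $\Sigma;\Gamma\Vdash_\omega\varphi$.

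\textbf{Main obstacle.} The delicate part is precisely the boxed/non-boxed bookkeeping in the $\infty\Rightarrow\omega$ direction: one has to verify that in the $\omega$-derivations of $\Box_0\xi_{n+1}\to\psi_a$ produced by the inductive construction, the assumption leaves inherited from $\delta$ land on the correct side (boxed vs.\ non-boxed) of the $\omega$-derivation being built, and that reconstructing the antecedent $\Box_0\xi_{n+1}$ (which requires boxing formulas $\psi_b$ for $b$ in the next slice, i.e.\ putting them under ($\mathsf{nec}$) or ($\omega$)-boxed premises) indeed renders all of those leaves boxed. The cleanest way to organize this is to strengthen the inductive claim so that it explicitly records the two sets of assumption leaves of the constructed $\omega$-derivation, and to note once and for all that, by definition of ``boxed'' for $\infty$-derivations, a node in slice $n+1$ with $n\ge 1$ has a ($\mathsf{nec}$) between it and the root, so any assumption leaf above it is boxed in $\delta$ and therefore in $\Sigma$.
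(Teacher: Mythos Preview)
Your proposal is correct in both directions, and the $\omega\Rightarrow\infty$ direction is essentially the paper's argument (the paper also unfolds each $(\omega)$-application into the Example~\ref{example2} pattern, using induction on the local height $|\delta|$ and invoking Lemma~\ref{global inf <-> global omega} for the boxed premises rather than a direct well-founded induction, but this is a cosmetic difference).

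For the $\infty\Rightarrow\omega$ direction your route differs from the paper's. The paper does \emph{not} redo the slicing argument: it argues by induction on the local height $|\delta|$ of the $\infty$-derivation, and in the $(\mathsf{nec})$ case simply observes that the subderivation above $(\mathsf{nec})$ witnesses $\Sigma\Vdash_\infty\psi$, then invokes Lemma~\ref{global inf <-> global omega} as a black box to get $\Sigma\Vdash_\omega\psi$ and applies $(\mathsf{nec})$. Your approach instead re-runs the slicing construction of Lemma~\ref{global inf <-> global omega} with explicit bookkeeping of which slice each assumption leaf lives in. Both work; the paper's version is shorter and more modular, while yours is self-contained and makes the boxed/non-boxed accounting fully explicit.

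One point of confusion in your write-up: there is no need to ``reconstruct the antecedent $\Box_0\xi_{n+1}$'' or ``build up $\Box_0\xi_1$ from slice-$\geq 2$ material.'' In the induction of Lemma~\ref{global inf <-> global omega}, the derivation of $\Box_0\xi_{n+1}\to\psi_a$ never derives $\Box_0\xi_{n+1}$; in the $(\mathsf{nec})$ subcase one just uses the GLP theorem $\Box_0\xi_{n+1}\to\Box_0\psi_b$ (no assumptions), and the only assumption leaves of the resulting derivation of $\Box_0\xi_{n+1}\to\xi_n$ are the assumption leaves of $\delta$ lying in slice $n{+}1$, all of them \emph{non-boxed} within that derivation. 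The boxing happens only at the final step, when these derivations are placed as boxed premises of the single $(\omega)$-application. With this clarification your argument goes through cleanly.
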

\begin{proof}
We begin with the left-to-right implication. Assume $\delta$ is an $\infty$-derivation with the root marked by $\varphi$ in which all boxed assumption leaves are marked by some elements of $\Sigma$ and all non-boxed assumption leaves are marked by some elements of $\Gamma$. By induction on $\lvert \delta \rvert $, we show that 
$\Sigma ;\Gamma \Vdash_\omega \varphi$.

If $\varphi$ is an axiom of $\mathsf{GLP}$ or an element of $\Gamma$, then we obtain the required statement immediately.
Otherwise, consider the lowermost application of an inference rule in $\delta$. 

Case 1. Suppose that $\delta$ has the form
\begin{gather*}
\AXC{$\delta^\prime$}
\noLine
\UIC{$\vdots$}
\noLine
\UIC{$\psi$}
\AXC{$\delta^{\prime\prime}$}
\noLine
\UIC{$\vdots$}
\noLine
\UIC{$\psi \to \varphi$}
\LeftLabel{$\mathsf{mp}$}
\RightLabel{ .}
\BIC{$\varphi$}
\DisplayProof
\end{gather*}
By the induction hypothesis applied to $ \delta^\prime $ and $\delta^{\prime\prime} $, we have 
$\Sigma ;\Gamma \Vdash_\omega \psi\to\varphi$ and $\Sigma ;\Gamma \Vdash_\omega \psi$. Therefore, $\Sigma ;\Gamma \Vdash_\omega \varphi$.

Case 2. Suppose that $\delta$ has the form
\begin{gather*}
\AXC{$\delta^\prime$}
\noLine
\UIC{$\vdots$}
\noLine
\UIC{$\psi$}
\LeftLabel{$\mathsf{nec}$}
\RightLabel{ ,}
\UIC{$\Box_0 \psi$}
\DisplayProof
\end{gather*}
where $\Box_0 \psi =\varphi$. We see that $\Sigma \Vdash_\infty \psi$. 
By Lemma \ref{global inf <-> global omega}, we have $\Sigma \Vdash_\omega \psi$. Applying the rule ($\mathsf{nec}$), we obtain $\Sigma ; \emptyset \Vdash_\omega \varphi$ and $\Sigma ; \Gamma \Vdash_\omega \varphi$.

Now we check the right-to-left implication. Assume $\delta$ is an $\omega$-derivation with the root marked by $\varphi$ in which all boxed assumption leaves are marked by some elements of $\Sigma$ and all non-boxed assumption leaves are marked by some elements of $\Gamma$. By induction on $\lvert \delta \rvert $, we prove that 
$\Sigma ;\Gamma \Vdash_\infty \varphi$.

Let us consider only the main case when $\delta$ has the form
\begin{gather*}
\AXC{$\delta^\prime$}
\noLine
\UIC{$\vdots$}
\noLine
\UIC{$\Box_0\varphi_{1} \rightarrow \varphi_0$}
\AXC{$\delta^{\prime\prime}$}
\noLine
\UIC{$\vdots$}
\noLine
\UIC{$\Box_0\varphi_{2} \rightarrow \varphi_1$}
\AXC{$\delta^{\prime\prime\prime}$}
\noLine
\UIC{$\vdots$}
\noLine
\UIC{$\Box_0\varphi_{3} \rightarrow \varphi_2$}
\AXC{$\dots$}
\LeftLabel{$\omega$}
\RightLabel{ ,}
\QIC{$ \varphi_0$}
\DisplayProof 
\end{gather*}
where $\varphi_0=\varphi$.
We see that $\Sigma \Vdash_\omega \Box_0\varphi_{n+2} \rightarrow \varphi_{n+1}$ for every $n\in \mathbb{N}$. From Lemma \ref{global inf <-> global omega}, we have $\Sigma \Vdash_\infty \Box_0\varphi_{n+2} \rightarrow \varphi_{n+1}$. By the induction hypothesis applied to $ \delta^\prime $, we also have $\Sigma ;\Gamma \Vdash_\infty \Box_0\varphi_{1} \rightarrow \varphi_0$. Hence, we obtain
\begin{gather*}
\AXC{$\vdots$}
\noLine
\UIC{$\varphi_3$}
\LeftLabel{\textsf{nec}} 
\UIC{$\Box_0  \varphi_3$}
\AXC{$\sigma_2$}
\noLine
\UIC{$\vdots$}
\noLine
\UIC{$ \Box_0 \varphi_3 \rightarrow \varphi_2$}
\LeftLabel{\textsf{mp}}
\BIC{$\varphi_2$}
\LeftLabel{\textsf{nec}} 
\UIC{$\Box_0  \varphi_2$}
\AXC{$\sigma_1$}
\noLine
\UIC{$\vdots$}
\noLine
\UIC{$ \Box_0 \varphi_2 \rightarrow \varphi_1$}
\LeftLabel{\textsf{mp}}
\BIC{$\varphi_1$}
\LeftLabel{\textsf{nec}} 
\UIC{$\Box_0  \varphi_1$}
\AXC{$\sigma_0$}
\noLine
\UIC{$\vdots$}
\noLine
\UIC{$ \Box_0 \varphi_1 \rightarrow \varphi_0$}
\LeftLabel{\textsf{mp}}
\RightLabel{ ,}
\BIC{$\varphi_0$}
\DisplayProof 
\end{gather*} 
where $\sigma_0$ is the corresponding $\infty$-derivtion in which all boxed assumption leaves are marked by some elements of $\Sigma$ and all non-boxed assumption leaves are marked by some elements of $\Gamma$, and $\sigma_n$, for $n>0$, are $\infty$-derivtions in which all assumption leaves are marked by elements of $\Sigma$. We conclude that $\Sigma ;\Gamma \Vdash_\infty \varphi$.
\end{proof}

In what follows, we will write $\Sigma ;\Gamma \Vdash \varphi$ instead of $\Sigma ;\Gamma \Vdash_\omega \varphi$ and $\Gamma  \Vdash_{ g} \varphi$ instead of $\Gamma \Vdash_\omega \varphi$. {The relation $\emptyset ;\Gamma \Vdash_\omega \varphi$ is also denoted by $\Gamma  \Vdash_{ l} \varphi$.

Trivially,
\[\Sigma ;\Gamma \vdash \varphi \Longrightarrow \Sigma ;\Gamma \Vdash \varphi.\]
The converse implication will be proved in the final section for the case when $\Sigma$ is finite.}


\section{Algebraic semantics}
\label{s2}

In this section, we consider algebraic semantics of the provability logic $\mathsf{GLP}$ extended with infinitary derivations.

A \emph{Magari algebra} (or a \emph{diagonalizable algebra}) 
$\mathcal{A}= ( A, \wedge, \vee, \to, 0, 1, \Box)$ is a Boolean algebra $( A, \wedge, \vee, \to, 0, 1)$ together with an {operation} $\Box \colon A \to A$ satisfying the identities:
 \[ \Box 1=1 , \quad  \Box (x \wedge y) = \Box x \wedge \Box y, \quad  \Box(\Box x\to x) =\Box x .\]

For any Magari algebra $\mathcal{A}$, the operation $\Box$ is monotone with respect to the order (of the Boolean part) of $\mathcal{A}$. Indeed, if $a \leqslant b$, then $a \wedge b =a$, $\Box a \wedge \Box b =\Box (a \wedge b) = \Box a$, and $\Box a \leqslant \Box b$. In addition, we remark that an inequality $\Box x \leqslant \Box \Box x$ holds in any Magari algebra.


We call a Magari algebra \emph{$\Box$-founded (or Pakhomov-Walsh-founded)\footnote{This notion was inspired by an article of Pakhomov and Walsh \cite{Pakhomov2021}.}} if, for every sequence of its elements $( a_i)_{i\in \mathbb{N}}$ such that $\Box a_{i+1}\leqslant a_{i}$, we have $a_0=1$.
Note that, for any such sequence $( a_i)_{i\in \mathbb{N}}$, all elements $a_i$ are equal to $1$ in any $\Box$-founded Magari algebra.

We give a series of examples of $\Box$-founded Magari algebras. A Magari algebra is called \emph{$\sigma$-complete} if its underlying Boolean algebra is $\sigma$-complete, that is, any countable subset $S$ of this algebra has the least upper bound $\bigvee S$. An equivalent condition is that every countable subset $S$ has the greatest lower bound $\bigwedge S$. 
\begin{prop}\label{from sigma-completness to box-foundness}
Any $\sigma$-complete Magari algebra is $\Box$-founded.\footnote{This statement was inspired by a correspondence with Tadeusz Litak (see also the proof of Theorem 2.15 from \cite{Litak2005}).} 
\end{prop}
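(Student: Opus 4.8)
The plan is to exploit $\sigma$-completeness to form a single fixed-point-like element out of a descending sequence and then use the Magari identity $\Box(\Box x \to x) = \Box x$ (together with monotonicity of $\Box$) to force that element to be $1$. So suppose $\mathcal{A}$ is a $\sigma$-complete Magari algebra and $(a_i)_{i\in\mathbb{N}}$ is a sequence with $\Box a_{i+1} \leqslant a_i$ for all $i$. I want to show $a_0 = 1$; in fact it is cleaner to show all $a_i = 1$ simultaneously.

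First I would set $b = \bigwedge_{i\in\mathbb{N}} a_i$, which exists by $\sigma$-completeness. The key observation is that $\Box b \leqslant b$: indeed, for each $i$ we have $b \leqslant a_{i+1}$, hence $\Box b \leqslant \Box a_{i+1} \leqslant a_i$ by monotonicity of $\Box$ and the hypothesis; since this holds for every $i$, $\Box b \leqslant \bigwedge_i a_i = b$. (One should note $\Box b \leqslant a_0$ as well, which comes from $\Box a_1 \leqslant a_0$ after observing $\Box b \leqslant \Box a_1$; more simply, $a_0 \geqslant \Box a_1 \geqslant \Box b$, so indeed $a_0$ already sits above $\Box b$, though what we really need is the inequality displayed for all indices including a shift — care is needed that the indexing gives $\Box b \leqslant a_i$ for all $i \geqslant 0$, which it does since $\Box b \leqslant \Box a_{i+1} \leqslant a_i$.)

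Next, from $\Box b \leqslant b$ I would derive $\Box b \to b = 1$, hence $\Box(\Box b \to b) = \Box 1 = 1$, and therefore by the diagonal identity $\Box b = \Box(\Box b \to b) = 1$. But then $b \geqslant \Box b = 1$, so $b = 1$, which means $\bigwedge_i a_i = 1$ and consequently every $a_i = 1$; in particular $a_0 = 1$. This completes the argument.

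The reasoning is essentially a three-line computation, so there is no serious obstacle; the only point requiring a moment's attention is the interplay of indices — making sure the hypothesis $\Box a_{i+1}\leqslant a_i$ is applied to the right terms so that $\Box b \leqslant a_i$ holds for every $i \in \mathbb{N}$ (not just for $i \geqslant 1$). Once $\Box b \leqslant b$ is secured, the Magari identity $\Box(\Box b \to b) = \Box b$ does all the work, exactly as in the classical Löb-style arguments, and the conclusion that the whole sequence collapses to the top element is immediate from $b = 1$.
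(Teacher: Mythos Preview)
Your proof is correct and essentially identical to the paper's: both set $b=\bigwedge_i a_i$, use monotonicity together with $\Box a_{i+1}\leqslant a_i$ to get $\Box b\leqslant b$, and then apply the Magari identity to conclude $\Box b=1$ and hence $b=1$.
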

  
\begin{proof}
Assume we have a $\sigma$-complete Magari algebra $\mathcal{A}$ and a sequence of its elements $( a_i)_{i\in \mathbb{N}}$ such that $\Box a_{i+1}\leqslant a_{i}$. We shall prove that $a_0=1$. 

Put $b= \bigwedge\limits_{i \in \mathbb{N}} a_i$. For any $i\in \mathbb{N}$, we have $b \leqslant  a_{i+1}$ and $\Box b \leqslant\Box a_{i+1}\leqslant a_{i}$. Hence, 
\[\Box b \leqslant b, \qquad \Box b \to b =1, \qquad \Box b = \Box (\Box b \to b) =\Box 1 =1, \qquad b=1 . \] We obtain that $a_0=1$.
\end{proof}

\begin{remark} {Let us additionally mention an arithmetical example of $\Box$-founded Magari algebra without going into details. If we consider the second-order arithmetical theory $\mathsf{ACA}_0$ extended with all true $\Sigma^1_1$-sentences, then its provability algebra forms a $\Box$-founded Magari algebra. This observation can be obtained following the lines of Theorem 3.2 from \cite{Pakhomov2021}.}
\end{remark}

The notion of $\Box$-founded Magari algebra $\mathcal{A}$ can be also defined in terms of the binary relation $\prec_\mathcal{A}$ on $\mathcal{A}$: 
\[a\prec_\mathcal{A} b \Longleftrightarrow \Box a \leqslant b.\]
\begin{prop}[see Proposition 3.1 from \cite{Shamkanov2020}]
For any Magari algebra $\mathcal{A}=( A, \wedge, \vee, \to, 0, 1, \Box )$, the relation $\prec_\mathcal{A}$ is a strict partial order on $A\setminus \{1\}$.
\end{prop}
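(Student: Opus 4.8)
The plan is to verify the three defining properties of a strict partial order on $A \setminus \{1\}$ for the relation $a \prec_\mathcal{A} b \Longleftrightarrow \Box a \leqslant b$: irreflexivity, transitivity, and (implicitly) that we stay inside $A \setminus \{1\}$. The key algebraic facts I would use are that $\Box$ is monotone and that $\Box x \leqslant \Box\Box x$ holds in any Magari algebra, both noted in the excerpt, together with L\"ob's identity $\Box(\Box x \to x) = \Box x$.

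For \emph{irreflexivity}, suppose toward a contradiction that $a \prec_\mathcal{A} a$ for some $a \in A \setminus \{1\}$, i.e. $\Box a \leqslant a$. Then $\Box a \to a = 1$, so $\Box a = \Box(\Box a \to a) = \Box 1 = 1$, and hence $a = 1$ (since $\Box a \leqslant a$ gives $1 \leqslant a$), contradicting $a \neq 1$. This is essentially the same one-line computation used in the proof of Proposition~\ref{from sigma-completness to box-foundness}, so it should present no difficulty.

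For \emph{transitivity}, assume $a \prec_\mathcal{A} b$ and $b \prec_\mathcal{A} c$, that is $\Box a \leqslant b$ and $\Box b \leqslant c$. Applying monotonicity of $\Box$ to the first inequality gives $\Box \Box a \leqslant \Box b$, and combining with $\Box x \leqslant \Box\Box x$ at $x = a$ yields $\Box a \leqslant \Box\Box a \leqslant \Box b \leqslant c$, so $a \prec_\mathcal{A} c$. I would also check that $\prec_\mathcal{A}$ restricted to $A\setminus\{1\}$ is well defined in the sense relevant here: irreflexivity already rules out $1$ appearing on the left paired with itself, but one should note the statement is simply that the relation, viewed as a subset of $(A\setminus\{1\})^2$, is a strict partial order; transitivity and irreflexivity are exactly what is needed, and asymmetry follows from these two in the usual way.

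I do not expect a genuine obstacle here — the statement is a direct consequence of L\"ob's identity and the monotonicity/transitivity properties of $\Box$ already recorded in the excerpt. If anything, the only point requiring a moment's care is making sure the derivation of $\Box a = 1$ from $\Box a \leqslant a$ is carried out correctly (via $\Box a \to a = 1$ and L\"ob), and that the combination $\Box a \leqslant \Box \Box a \leqslant \Box b$ in the transitivity step uses the two cited facts in the right order.
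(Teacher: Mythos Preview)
Your argument is correct. The paper does not actually include a proof of this proposition; it only cites Proposition~3.1 of \cite{Shamkanov2020}. Your proof is the standard one: irreflexivity follows from L\"ob's identity via the computation $\Box a \leqslant a \Rightarrow \Box a = \Box(\Box a \to a) = \Box 1 = 1 \Rightarrow a = 1$, and transitivity follows from monotonicity of $\Box$ together with $\Box x \leqslant \Box\Box x$. There is nothing further to compare.
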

\begin{prop}[see Proposition 3.2 from \cite{Shamkanov2020}]\label{box-foundness is equivalent to well-foundness}
For any Magari algebra $\mathcal{A}=( A, \wedge, \vee, \to, 0, 1, \Box )$, the algebra $\mathcal{A}$ is $\Box$-founded if and only if the partial order $\prec_\mathcal{A}$ on $A\setminus \{1\}$ is well-founded.
\end{prop}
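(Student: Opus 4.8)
The plan is to exploit the fact that a sequence $(a_i)_{i\in\mathbb{N}}$ with $\Box a_{i+1}\leqslant a_i$ is, by the very definition of $\prec_\mathcal{A}$, exactly the same thing as a sequence with $a_{i+1}\prec_\mathcal{A} a_i$ for all $i$; so $\Box$-foundedness is manifestly a statement about infinite descending chains, and the content of the proposition is just to match it with ordinary well-foundedness of $\prec_\mathcal{A}$ on $A\setminus\{1\}$. Before treating either direction I would record one auxiliary observation: in any such sequence, if $a_k=1$ for some $k$, then $a_0=1$. This is a downward induction — from $\Box a_k\leqslant a_{k-1}$ and $a_k=1$ we get $1=\Box 1=\Box a_k\leqslant a_{k-1}$, hence $a_{k-1}=1$, and iterating yields $a_0=1$. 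Contrapositively, if $a_0\neq 1$ then $a_i\neq 1$ for all $i$.

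For the forward direction, assume $\mathcal{A}$ is $\Box$-founded and suppose towards a contradiction that there is an infinite sequence $(a_i)_{i\in\mathbb{N}}$ in $A\setminus\{1\}$ with $a_{i+1}\prec_\mathcal{A} a_i$ for all $i$. Unfolding the definition, $a_{i+1}\prec_\mathcal{A} a_i$ means $\Box a_{i+1}\leqslant a_i$, so $(a_i)_{i\in\mathbb{N}}$ is a sequence of exactly the kind occurring in the definition of $\Box$-foundedness; hence $a_0=1$, contradicting $a_0\in A\setminus\{1\}$. Thus $\prec_\mathcal{A}$ admits no infinite descending chain, i.e.\ it is well-founded on $A\setminus\{1\}$.

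For the converse, assume $\prec_\mathcal{A}$ is well-founded on $A\setminus\{1\}$ and let $(a_i)_{i\in\mathbb{N}}$ satisfy $\Box a_{i+1}\leqslant a_i$. Suppose $a_0\neq 1$. By the auxiliary observation, $a_i\neq 1$ for every $i$, so the whole sequence lies in $A\setminus\{1\}$. From $\Box a_{i+1}\leqslant a_i$ we get $a_{i+1}\prec_\mathcal{A} a_i$, and since $\prec_\mathcal{A}$ is a strict partial order on $A\setminus\{1\}$ by the preceding proposition — in particular irreflexive — we have $a_{i+1}\neq a_i$. Hence $(a_i)_{i\in\mathbb{N}}$ is an infinite strictly descending chain of $\prec_\mathcal{A}$ inside $A\setminus\{1\}$, contradicting well-foundedness. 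Therefore $a_0=1$, and $\mathcal{A}$ is $\Box$-founded.

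Neither direction involves any real computation; the only thing to be careful about is the bookkeeping that keeps the descending chain inside $A\setminus\{1\}$ and makes its steps strict, which is precisely the role of the propagation observation together with the irreflexivity of $\prec_\mathcal{A}$ supplied by the previous proposition. The one genuinely definitional choice is which formulation of well-foundedness to adopt: I would use ``there is no infinite $\prec_\mathcal{A}$-descending sequence'', the form directly dual to the definition of $\Box$-foundedness, and note (or cite) its standard classical equivalence with the minimal-element formulation if that is the ambient convention.
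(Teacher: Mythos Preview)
Your argument is correct. The paper does not supply its own proof of this proposition, merely citing \cite{Shamkanov2020}, so there is no in-paper argument to compare against; your reduction of both directions to the tautological identification ``$\Box a_{i+1}\leqslant a_i$'' $\Leftrightarrow$ ``$a_{i+1}\prec_\mathcal{A} a_i$'' together with the propagation observation $a_k=1\Rightarrow a_0=1$ is exactly the natural route. One small remark: the appeal to irreflexivity to conclude $a_{i+1}\neq a_i$ is harmless but unnecessary, since an infinite sequence with $a_{i+1}\prec_\mathcal{A} a_i$ already violates well-foundedness regardless of whether consecutive terms coincide.
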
 


A Boolean algebra $( A, \wedge, \vee, \to, 0, 1)$ together with a sequence of unary {operations} $\Box_0, \Box_1, \dotsc$ is called a \emph{$\mathsf{GLP}$-algebra} if, for each $i \in \mathbb{N}$, it satisfies the following conditions:
\begin{enumerate}
\item $( A, \wedge, \vee, \to, 0, 1, \Box_i )$ is a Magari algebra;
\item $\Diamond_i a \leqslant \Box_{i+1} \Diamond_i a $ for any $a \in A$;
\item $\Box_i a \leqslant \Box_{i+1} a $ for any $a \in A$.  
\end{enumerate}
 
For a $\mathsf{GLP}$-algebra $\mathcal{A}= (A, \wedge, \vee, \to, 0, 1, \Box_0, \Box_1, \dotsc)$, the Magari algebra $(A, \wedge, \vee, \to, 0, 1, \Box_i)$ is denoted by $\mathcal{A}_i$. A $\mathsf{GLP}$-algebra $\mathcal{A}$ is called \emph{$\Box$-founded ($\sigma$-complete)} if the Magari algebra $\mathcal{A}_0 $ is $\Box$-founded ($\sigma$-complete). From Proposition \ref{from sigma-completness to box-foundness}, we immediately see that any $\sigma$-complete $\mathsf{GLP}$-algebra is $\Box$-founded. In addition, it can be easily shown that $\mathcal{A}_i$ is $\Box$-founded for every $i\in \mathbb{N}$ whenever $\mathcal{A}$ is $\Box$-founded.

Now we define semantic consequence relations over $\Box$-founded $\mathsf{GLP}$-algebras corresponding
to derivability relations $\Vdash_{l}$, $\Vdash_{g}$ and $\Vdash$. A \emph{valuation in a $\mathsf{GLP}$-algebra $\mathcal{A}=( A, \wedge, \vee, \to, 0, 1, \Box_0, \Box_1, \dotsc )$} is standardly defined as a function $v \colon \mathit{Fm} \to A$ such that $v (\bot) = 0$,  $v (\varphi \to \psi) =  v (\varphi) \to v(\psi)$ and $v (\Box_i \varphi) = \Box_i v (\varphi)$. For a subset $S$ of a $\mathsf{GLP}$-algebra $\mathcal{A}$, the filter of (the Boolean reduct of) $\mathcal{A}$ generated by $S$ is denoted by $\langle S \rangle$.


\begin{definition}
Given a set of formulas $\Gamma$ and a formula $\varphi$, we set $\Gamma \VDash_l \varphi$ if for any $\Box$-founded $\mathsf{GLP}$-algebra $\mathcal{A}$ and any valuation $v$ in $\mathcal{A}$ 
\[ v(\varphi)\in \langle \{v(\psi)\mid \psi \in \Gamma\}\rangle . \]
We also set $\Gamma \VDash_g \varphi$ if for any $\Box$-founded $\mathsf{GLP}$-algebra $\mathcal{A}$ and any valuation $v$ in $\mathcal{A}$ 
\[ (\forall \psi \in \Gamma\;\; v(\psi)=1) \Rightarrow v(\varphi)=1 . \] 
In addition, we set $\Sigma ; \Gamma \VDash \varphi$ if for any $\Box$-founded $\mathsf{GLP}$-algebra $\mathcal{A}$ and any valuation $v$ in $\mathcal{A}$
\[ (\forall \xi \in \Sigma\;\; \Box_0 v(\xi)=1) \Longrightarrow v(\varphi)\in \langle \{v(\psi)\mid \psi \in \Gamma\}\rangle . \]
\end{definition}

The relation $\VDash$ is a generalization of $\VDash_l$ and $\VDash_g $ since $\emptyset ; \Gamma \VDash \varphi \Leftrightarrow \Gamma \VDash_l \varphi$ and $\Gamma ;\Gamma  \VDash \varphi \Leftrightarrow \Gamma  \VDash_g \varphi$.
The only nontrivial implication is the following.



\begin{lem}\label{algebraic lemma}
For any set of formulas $\Gamma$ and any formula $\varphi$, we have
\[\Gamma \VDash_g \varphi \Longrightarrow  \Gamma ;\Gamma\VDash \varphi.\]
\end{lem}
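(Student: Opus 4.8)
The plan is to prove the contrapositive: assuming $\Gamma ; \Gamma \not\VDash \varphi$, I will construct a $\Box$-founded $\mathsf{GLP}$-algebra $\mathcal{A}$ and a valuation $v$ witnessing $\Gamma \not\VDash_g \varphi$. The natural candidate for $\mathcal{A}$ is a Lindenbaum–Tarski-style quotient of the formula algebra $\mathit{Fm}$ by a suitable congruence, chosen so that every $\psi \in \Gamma$ gets value $1$ while $\varphi$ does not. Concretely, I would consider the relation $\Vdash$ (equivalently $\Vdash_\omega$, by Theorem \ref{inf <-> omega}) and define a congruence on $\mathit{Fm}$ by setting $\chi \sim \chi'$ iff $\Gamma ; \Gamma \Vdash \chi \leftrightarrow \chi'$ — or rather work with the global relation $\Gamma \Vdash_g -$, so that $[\chi] = 1$ in the quotient precisely when $\Gamma \Vdash_g \chi$. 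Since $\Gamma ; \Gamma \not\VDash \varphi$, in particular $\Gamma \Vdash_g$ is consistent in the relevant sense and $\varphi$ is not globally derivable from $\Gamma$ (if it were, a soundness argument for $\VDash$ would give $\Gamma ; \Gamma \VDash \varphi$), so $[\varphi] \neq 1$ in this quotient.

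The key steps, in order: (1) Verify that $\Gamma \Vdash_g -$ is closed under $\mathsf{mp}$, $\mathsf{nec}$, the $\omega$-rule, contains all $\mathsf{GLP}$-axioms, and contains every $\psi \in \Gamma$ — this is essentially immediate from the definitions of $\omega$-derivations and Definition \ref{def2}/the subsequent one. (2) Define the congruence and check it \emph{is} a congruence with respect to $\to$ and each $\Box_i$; the modal case uses that $\Vdash_g$ is closed under $\mathsf{nec}$ together with the $\mathsf{K}$-axiom (ii). (3) Check that the quotient $\mathcal{A} = \mathit{Fm}/{\sim}$ is a $\mathsf{GLP}$-algebra: the Boolean identities come from axioms (i); the Magari identities for each $\Box_i$ come from axioms (ii) and (iii) plus $\mathsf{nec}$ (for $\Box_0$) and the fact that $\mathsf{GLP}$ proves $\mathsf{nec}$ is admissible for all $\Box_i$; conditions 2 and 3 of the $\mathsf{GLP}$-algebra definition come from axioms (iv) and (v). (4) The crucial step: show $\mathcal{A}$ is $\Box$-founded. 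By Proposition \ref{box-foundness is equivalent to well-foundness} it suffices to show $\prec_{\mathcal{A}_0}$ is well-founded on $A \setminus \{1\}$; equivalently, show there is no sequence $([\chi_i])_{i \in \mathbb{N}}$ with $\Box_0 [\chi_{i+1}] \leqslant [\chi_i]$ and $[\chi_0] \neq 1$. But $\Box_0 [\chi_{i+1}] \leqslant [\chi_i]$ unpacks to $\Gamma \Vdash_g \Box_0 \chi_{i+1} \to \chi_i$, and then the $\omega$-rule applied to the family $\{\Box_0 \chi_{i+1} \to \chi_i \mid i \in \mathbb{N}\}$ yields $\Gamma \Vdash_g \chi_0$, i.e. $[\chi_0] = 1$ — contradiction. (5) Finally take $v$ to be the canonical valuation $p \mapsto [p]$; then $v(\psi) = [\psi] = 1$ for all $\psi \in \Gamma$, while $v(\varphi) = [\varphi] \neq 1$, contradicting $\Gamma \VDash_g \varphi$.

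I expect step (4) — $\Box$-foundedness of the canonical algebra — to be the main obstacle, and it is precisely where the $\omega$-rule is indispensable: without it, the Lindenbaum algebra of $\mathsf{GLP}$ need not be $\Box$-founded, so the whole point is that passing to $\Vdash_\omega$ (equivalently $\Vdash_\infty$) exactly forces well-foundedness of $\prec_{\mathcal{A}_0}$. A secondary technical point worth care in step (3) is that $\mathsf{nec}$ in the calculus only directly gives necessitation for $\Box_0$, so to get $\Box_i 1 = 1$ and the Magari identities for $i \geqslant 1$ one must invoke that $\mathsf{GLP} \vdash \Box_0 \theta \to \Box_i \theta$ (axiom (v), iterated) together with $\Box_0$-necessitation, which transfers necessitation to all $\Box_i$; alternatively, cite that these are standard facts about $\mathsf{GLP}$-algebras. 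Everything else is routine Lindenbaum-algebra bookkeeping.
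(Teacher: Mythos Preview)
Your approach is genuinely different from the paper's, and it carries a circularity risk you should be aware of.

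The lemma is a purely \emph{semantic} statement relating the two consequence relations $\VDash_g$ and $\VDash$; no syntax is involved. The paper therefore proves it directly and semantically: given a $\Box$-founded $\mathsf{GLP}$-algebra $\mathcal{A}$ and valuation $v$ with $\Box_0 v(\psi)=1$ for all $\psi\in\Gamma$, it forms the quotient $\mathcal{A}/F$ by the filter $F=\langle\{v(\psi)\mid\psi\in\Gamma\}\rangle$, checks that $F$ is open (using $\Box_0 v(\psi)=1$), and then shows that $\mathcal{A}/F$ is again $\Box$-founded. The computation establishing this last fact is exactly the one you isolate in your step~(4), only applied to the concrete algebra $\mathcal{A}$ rather than to a Lindenbaum algebra. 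Since in $\mathcal{A}/F$ every $\psi\in\Gamma$ has value $1$, the hypothesis $\Gamma\VDash_g\varphi$ gives $v(\varphi)\in F$ immediately.

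Your route instead passes through the \emph{syntactic} relation $\Vdash_g$: from $\Gamma;\Gamma\not\VDash\varphi$ you want to infer $\Gamma\not\Vdash_g\varphi$, and then build a Lindenbaum algebra. The inference you label ``a soundness argument for $\VDash$'' is precisely the implication $\Gamma\Vdash_g\varphi\Rightarrow\Gamma;\Gamma\VDash\varphi$. But in the paper this is obtained as a special case of Proposition~\ref{algebraic soundness 2}, whose proof \emph{uses} Lemma~\ref{algebraic lemma} itself (in the $\mathsf{nec}$ case). So as written your argument is circular. You can avoid the circularity by proving that specific soundness directly by transfinite induction on $\omega$-derivations---and indeed this works---but the $\omega$-rule case of that induction requires exactly the same filter-and-$\Box$-foundedness computation that the paper carries out once in its direct proof. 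So you end up doing the paper's work anyway, plus the extra Lindenbaum bookkeeping of steps (1)--(3) and (5). In short: your plan can be made correct, but the syntactic detour adds length without adding leverage, and you should flag and discharge the soundness step explicitly rather than gesture at it.
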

\begin{proof}
Assume $\Gamma \VDash_g \varphi$. In addition, assume we have a $\Box$-founded $\mathsf{GLP}$-algebra $\mathcal{A}=(A, \wedge, \vee, \to, 0, 1, \Box_0, \Box_1, \dotsc )$ together with a valuation $v$ in $\mathcal{A}$ such that $\Box_0 v(\psi)=1$ for any $\psi \in \Gamma$. We shall prove that $v(\varphi) \in \langle \{ v(\psi) \mid \psi \in \Gamma \} \rangle$.

We denote the filter $\langle \{ v(\psi) \mid \psi \in \Gamma \} \rangle$ of $\mathcal{A}$ by $F$. Let us check that $F$ is an open filter that is $\Box_i a \in F$ for every $i\in \mathbb{N}$ whenever $a\in F$. 
If $a\in F$, then $v(\psi_1)\wedge\dotsb \wedge v(\psi_k) \leqslant a$ for a finite set of formulas $\{\psi_1, \dotsc, \psi_k\}\subset \Gamma$. 
Consequently, $\Box_0 v(\psi_1)\wedge\dotsb \wedge\Box_0\psi(\psi_k) \leqslant \Box_0 a$. Since $\Box_0 v(\psi)=1$ for any $\psi \in \Gamma$, we obtain $\Box_0 a =1$ and $\Box_0 a \in F$. Notice that $\Box_0 a \leqslant \Box_i a$ for each $i\in \mathbb{N}$. Hence, for all $i\in \mathbb{N}$, $\Box_i a \in F$.

Now the quotient $\mathsf{GLP}$-algebra $\mathcal{A}/ F$ and the canonical epimorphism $f \colon \mathcal{A} \to \mathcal{A} / F$ are well-defined. We claim that the algebra $\mathcal{A}/ F$ is $\Box$-founded. Assume there exists a sequence of elements $(a_i)_{i\in \mathbb{N}}$ of $\mathcal{A}$ such that $\Box_0 f(a_{i+1})\leqslant f(a_{i})$. We see that $f(\Box_0 a_{i+1} \to a_i)=1$ and $(\Box_0 a_{i+1} \rightarrow a_i) \in F$. Therefore, there exists a sequence $(\Gamma_i)_{i\in \mathbb{N}}$ of finite subsets of $\Gamma$ such that $\bigwedge \{  v(\psi) \mid \psi\in \Gamma_i\}  \leqslant (\Box_0 a_{i+1} \to a_i)$ in $\mathcal{A}$.
Consequently, $\bigwedge \{  v(\psi) \mid \psi\in \Gamma_i\}  \wedge \Box_0 a_{i+1} \leqslant  a_i$  and $ \bigwedge \{  \Box_0 v(\psi) \mid \psi\in \Gamma_i\}  \wedge \Box_0 \Box_0 a_{i+1} \leqslant  \Box_0 a_i$.  
Since $\Box_0 v(\psi)=1$ for any $\psi \in \Gamma$, we have $\Box_0 \Box_0 a_{i+1} \leqslant  \Box_0 a_i$ in $\mathcal{A}$. From $\Box$-foundedness of $\mathcal{A}$, we obtain $\Box_0 a_i =1$ for any $i\in \mathbb{N}$. Since $\bigwedge \{  v(\psi) \mid \psi\in \Gamma_i\}  \wedge \Box_0 a_{i+1} \leqslant  a_i$, we obtain $\bigwedge \{  v(\psi) \mid \psi\in \Gamma_i\}  \leqslant  a_i$. Thus, $a_i \in F$ for any $i\in \mathbb{N}$ and $ f (a_0) =1$. The algebra $\mathcal{A}/ F$ is $\Box$-founded.

Let us consider the valuation $f \circ v$ in $\mathcal{A}/ F$. We see that $(f\circ v)(\psi)=1$ in $\mathcal{A}/ F$ for every $\psi \in \Gamma$. From the assumption $\Gamma \VDash_g \varphi$, it follows that $(f\circ v)(\varphi)=1$. Hence, $v(\varphi) \in F =\langle \{ v(\psi) \mid \psi \in \Gamma \} \rangle$.\end{proof}

\begin{lem}\label{algebraic soundness}
For any set of formulas $\Gamma$ and any formula $\varphi$, we have
\[\Gamma \Vdash_{g} \varphi \Longrightarrow  \Gamma \VDash_g \varphi.\]
\end{lem}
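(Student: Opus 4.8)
The plan is to establish algebraic soundness of $\omega$-derivations by induction along the derivation tree, checking each inference rule against the algebraic structure. Suppose $\Gamma \Vdash_g \varphi$, and fix an $\omega$-derivation $\delta$ with root marked by $\varphi$ in which every assumption leaf lies in $\Gamma$. Let $\mathcal{A} = (A, \wedge, \vee, \to, 0, 1, \Box_0, \Box_1, \dotsc)$ be an arbitrary $\Box$-founded $\mathsf{GLP}$-algebra and $v$ a valuation in $\mathcal{A}$ with $v(\psi) = 1$ for all $\psi \in \Gamma$. Since $\delta$ is a well-founded tree, the ``proper subtree'' relation on its nodes is well-founded, so we may argue by induction along it: for a node $a$ with formula $\psi_a$, assuming $v(\psi_b) = 1$ for every node $b$ strictly above $a$, we show $v(\psi_a) = 1$. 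Applying this to the root gives $v(\varphi) = 1$, and since $\mathcal{A}$ and $v$ were arbitrary, $\Gamma \VDash_g \varphi$.

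If $a$ is a leaf, then either $\psi_a \in \Gamma$, so $v(\psi_a) = 1$ by the choice of $v$, or $\psi_a$ is an axiom of $\mathsf{GLP}$, in which case $v(\psi_a) = 1$ is a routine consequence of the definition of a $\mathsf{GLP}$-algebra: classical tautologies hold in the Boolean reduct, axioms (ii) and (iii) hold since each $\mathcal{A}_i$ is a Magari algebra, and axioms (iv) and (v) hold by conditions (2) and (3) of the definition. If $\psi_a$ is obtained by $(\mathsf{mp})$ from premises $\chi$ and $\chi \to \psi_a$, the induction hypothesis gives $v(\chi) = 1$ and $v(\chi) \to v(\psi_a) = 1$, hence $v(\psi_a) = 1$. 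If $\psi_a = \Box_0 \chi$ is obtained by $(\mathsf{nec})$, then $v(\chi) = 1$ by hypothesis, so $v(\psi_a) = \Box_0 1 = 1$.

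The essential case, and the only one that invokes $\Box$-foundedness, is the rule $(\omega)$: here $a$ has premises marked by $\Box_0 \varphi_{n+1} \to \varphi_n$ for all $n \in \mathbb{N}$, with $\psi_a = \varphi_0$. The induction hypothesis gives $\Box_0 v(\varphi_{n+1}) \leqslant v(\varphi_n)$ for every $n$, so the sequence $a_n := v(\varphi_n)$ satisfies $\Box_0 a_{n+1} \leqslant a_n$. Since $\mathcal{A}$ is $\Box$-founded (that is, $\mathcal{A}_0$ is $\Box$-founded), we conclude $a_0 = v(\varphi_0) = 1$, which finishes the induction.

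I do not expect a genuine obstacle here: this is a direct soundness argument, and its one nonroutine point — matching the infinitary rule $(\omega)$ with a semantic closure condition — is exactly what the notion of $\Box$-founded Magari algebra was designed to capture. The only mild care needed is to run the induction along the (possibly infinitely branching) well-founded derivation tree rather than on a numerical height, since $(\omega)$ has infinitely many premises and the local height need not decrease when passing to its boxed premises.
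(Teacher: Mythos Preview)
Your argument is correct and follows essentially the same route as the paper: a transfinite (well-founded) induction along the $\omega$-derivation, with the only substantive case being the rule $(\omega)$, dispatched by the defining property of $\Box$-founded algebras. The paper states this in one line (``transfinite induction on the ordinal heights of $\omega$-derivations''), and your version simply spells out the cases; your closing remark that the local height $\lvert\delta\rvert$ is the wrong induction measure here is also on point.
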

\begin{proof}

The lemma is easily obtained by transfinite induction on the ordinal heights of $\omega$-derivations. The main case of an inference rule ($\omega$) holds since $ \{\Box_0\varphi_{n+1} \rightarrow \varphi_n\mid n\in \mathbb{N}\} \VDash_g \varphi_0$, which follows from the definition of $\Box$-founded $\mathsf{GLP}$-algebra.


\end{proof}

\begin{prop}[algebraic soundness]\label{algebraic soundness 2}
For any sets of formulas $\Sigma$ and $\Gamma$, and any formula $\varphi$, we have
\[\Sigma;\Gamma \Vdash \varphi \Longrightarrow  \Sigma;\Gamma \VDash \varphi.\]
\end{prop}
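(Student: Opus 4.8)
The plan is to prove algebraic soundness by transfinite induction on the ordinal height of the $\omega$-derivation $\delta$ witnessing $\Sigma;\Gamma \Vdash \varphi$, while carrying along a valuation $v$ in an arbitrary $\Box$-founded $\mathsf{GLP}$-algebra $\mathcal{A}$ satisfying $\Box_0 v(\xi) = 1$ for all $\xi \in \Sigma$, and trying to show $v(\varphi) \in F$, where $F \coloneq \langle \{v(\psi) \mid \psi \in \Gamma\}\rangle$. First I would record the base cases: if $\varphi$ is an axiom of $\mathsf{GLP}$ then $v(\varphi) = 1 \in F$ by algebraic validity of the axioms; if $\varphi \in \Gamma$, then $v(\varphi) \in F$ by definition of the generated filter.

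For the inductive step I would split on the lowermost inference rule. For $(\mathsf{mp})$ with premises $\psi$ and $\psi \to \varphi$, the induction hypothesis gives $v(\psi) \in F$ and $v(\psi) \to v(\varphi) = v(\psi \to \varphi) \in F$, and since $F$ is a filter it is closed under modus ponens in the Boolean sense, so $v(\varphi) \in F$. For $(\mathsf{nec})$ with $\varphi = \Box_0 \psi$, the subderivation of $\psi$ has all its assumption leaves boxed, hence marked by elements of $\Sigma$, so it witnesses $\Sigma \Vdash_g \psi$; by Lemma~\ref{algebraic soundness} this gives $\Sigma \VDash_g \psi$. Applying this to the valuation $v$ in $\mathcal{A}$ (using that $\Box_0 v(\xi) = 1$ for $\xi \in \Sigma$ implies $v(\xi) = 1$, since $1 = \Box_0 v(\xi) \leqslant \Box_0 v(\xi) \to v(\xi)$ forces... actually more directly: $\Box_0 v(\xi) = 1$ and $v(\xi) \to $ — I would instead use the reduct argument) we get $v(\psi) = 1$, hence $v(\varphi) = \Box_0 1 = 1 \in F$. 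Wait: $\Box_0 v(\xi) = 1$ does not immediately give $v(\xi) = 1$; here I would pass to the open filter $\langle \{v(\xi) \mid \xi \in \Sigma\} \cup \{\Box_0 v(\xi) \mid \xi \in \Sigma\}\rangle$ exactly as in the proof of Lemma~\ref{algebraic lemma}, or simply observe that the subderivation of $\psi$ only uses elements of $\Sigma$ as \emph{boxed} assumptions and reapply the statement $\Sigma;\emptyset \VDash \psi$ — but that is what we are proving. Cleanest: invoke that $\Sigma \Vdash_g \psi$ and Lemma~\ref{algebraic soundness} yield $\Sigma \VDash_g \psi$, and then use the quotient construction from Lemma~\ref{algebraic lemma} (the algebra $\mathcal{A}/F'$ with $F' = \langle\{v(\xi)\mid \xi\in\Sigma\}\rangle$ is $\Box$-founded and kills all $v(\xi)$), concluding $v(\psi) \in F'$; but actually for the $(\mathsf{nec})$ case we need $v(\psi)=1$ in $\mathcal{A}$ itself, not merely in a quotient, so the right move is: since $\Box_0 v(\xi) = 1$, the subalgebra argument shows $\Sigma \Vdash_g \psi$ combined with $\Sigma;\Sigma \VDash \psi$ (Lemma~\ref{algebraic lemma} applied to $\Gamma := \Sigma$) and the hypothesis $\Box_0 v(\xi)=1$ for $\xi \in \Sigma$ gives $v(\psi) \in \langle\{v(\xi)\mid\xi\in\Sigma\}\rangle$, hence $\Box_0 v(\psi) \in$ that filter too — and then $v(\Box_0\psi) = \Box_0 v(\psi)$, and since all generators of that filter have $\Box_0$-value $1$, in fact $\Box_0 v(\psi) = 1$, i.e. $v(\varphi) = 1 \in F$.

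For the rule $(\omega)$ with conclusion $\varphi_0 = \varphi$ and premises $\Box_0\varphi_{n+1} \to \varphi_n$, the leftmost premise $\Box_0\varphi_1 \to \varphi_0$ is non-boxed and its subderivation gives, by induction hypothesis, $v(\Box_0\varphi_1 \to \varphi_0) \in F$. Each of the boxed premises $\Box_0\varphi_{n+2}\to\varphi_{n+1}$ ($n \geqslant 0$) has all assumption leaves boxed, hence in $\Sigma$, so by the $(\mathsf{nec})$-style reasoning above (equivalently, $\Sigma \Vdash_g \Box_0\varphi_{n+2}\to\varphi_{n+1}$, then Lemma~\ref{algebraic soundness}), we get $v(\Box_0\varphi_{n+2}\to\varphi_{n+1}) = 1$, i.e. $\Box_0 v(\varphi_{n+2}) \leqslant v(\varphi_{n+1})$ in $\mathcal{A}$. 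Now I would pass to the quotient $\mathcal{A}/F$: writing $f$ for the canonical map and $a_n \coloneq f(v(\varphi_{n+1}))$ for $n \geqslant 0$, the relations just obtained give $\Box_0 a_{n+1} \leqslant a_n$ in $\mathcal{A}/F$ — here I must first check $\mathcal{A}/F$ is $\Box$-founded, which is exactly the content of the quotient argument in Lemma~\ref{algebraic lemma} (that lemma's hypothesis $\Box_0 v(\psi) = 1$ for $\psi \in \Gamma$ is not available in general, so instead I would apply Lemma~\ref{algebraic lemma} — or rather its proof technique — to the present $\Sigma$ and $\Gamma$, or simply note that the needed $\Box$-foundedness of $\mathcal{A}/F$ must be established freshly). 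By $\Box$-foundedness of $\mathcal{A}/F$, $a_0 = f(v(\varphi_1)) = 1$, i.e. $v(\varphi_1) \in F$; combined with $v(\Box_0\varphi_1 \to \varphi_0) \in F$ and $\Box_0 v(\varphi_1) \in F$ (since $v(\varphi_1) \in F$ and $F$ is open), we get $\Box_0 v(\varphi_1) \to v(\varphi_0) \in F$ and $\Box_0 v(\varphi_1) \in F$, whence $v(\varphi_0) = v(\varphi) \in F$ by closure of $F$ under modus ponens.

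The main obstacle I anticipate is the bookkeeping around $\Box$-foundedness of the quotient $\mathcal{A}/F$ and the open-filter argument for boxed assumptions: Lemma~\ref{algebraic lemma} establishes these under the extra hypothesis that every $\Box_0 v(\psi)$ equals $1$ for $\psi$ in the generating set, which in the setting of $\Sigma;\Gamma \VDash \varphi$ holds for $\Sigma$ but \emph{not} for $\Gamma$. So I expect the cleanest route is to keep $\Gamma$-assumptions "inside the filter $F$" and only ever quotient by the \emph{boxed} part — i.e. the relevant $\Box$-founded quotient is $\mathcal{A}/\langle\{v(\xi)\mid\xi\in\Sigma\}\rangle$, whose $\Box$-foundedness is verbatim Lemma~\ref{algebraic lemma}'s quotient argument (with $\Gamma$ there instantiated to $\Sigma$ here and using $\Box_0 v(\xi) = 1$), and to run the $(\omega)$ fixed-point descent in that quotient, tracking $\Gamma$-memberships separately via the Boolean filter generated by the images of $v(\psi)$, $\psi \in \Gamma$. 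I would organize the induction so that all three cases reduce to: (a) the induction hypothesis on strictly shorter $\omega$-derivations, (b) Lemmas~\ref{algebraic lemma} and~\ref{algebraic soundness} for the boxed/global parts, and (c) closure properties of filters.
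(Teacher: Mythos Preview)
Your overall route differs from the paper's. The paper first invokes Theorem~\ref{inf <-> omega} to replace the $\omega$-derivation by an $\infty$-derivation $\delta$, and then argues by ordinary induction on the \emph{finite} local height $\lvert\delta\rvert$. Since $\infty$-derivations use only $(\mathsf{mp})$ and $(\mathsf{nec})$, there is no $(\omega)$ case at all; the $(\mathsf{nec})$ case there is essentially identical to the one you eventually settle on, concluding $\Box_0 v(\eta)=1$ via Lemmas~\ref{algebraic soundness} and~\ref{algebraic lemma}.

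Your direct transfinite induction on $\omega$-derivations can be made to work, but the $(\omega)$ case as written contains a real slip. From $\Sigma \Vdash_g (\Box_0\varphi_{n+2}\to\varphi_{n+1})$ and Lemma~\ref{algebraic soundness} you obtain $\Sigma \VDash_g (\Box_0\varphi_{n+2}\to\varphi_{n+1})$, but this does \emph{not} give $v(\Box_0\varphi_{n+2}\to\varphi_{n+1})=1$ in $\mathcal{A}$: the premise of $\VDash_g$ demands $v(\xi)=1$ for each $\xi\in\Sigma$, whereas you only have $\Box_0 v(\xi)=1$. What the $(\mathsf{nec})$-style reasoning (i.e.\ Lemma~\ref{algebraic lemma}) actually delivers is $v(\Box_0\varphi_{n+2}\to\varphi_{n+1})\in\langle\{v(\xi)\mid\xi\in\Sigma\}\rangle$, and hence $\Box_0 v(\Box_0\varphi_{n+2}\to\varphi_{n+1})=1$. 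From there you can finish directly in $\mathcal{A}$: setting $c_n:=\Box_0 v(\varphi_{n+1})$, the last equality gives $\Box_0 c_{n+1}=\Box_0\Box_0 v(\varphi_{n+2})\leqslant \Box_0 v(\varphi_{n+1})=c_n$, so $\Box$-foundedness of $\mathcal{A}$ itself yields $c_0=\Box_0 v(\varphi_1)=1$, and together with $v(\Box_0\varphi_1\to\varphi_0)\in F$ you get $v(\varphi_0)\in F$. Your final-paragraph plan to quotient by $F_\Sigma=\langle\{v(\xi)\mid\xi\in\Sigma\}\rangle$ is the same computation carried out in $\mathcal{A}/F_\Sigma$ and also works; no quotient by the $\Gamma$-filter $F$ is ever needed, and as you rightly observe, $\mathcal{A}/F$ need not be $\Box$-founded.

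In short: the paper's detour through $\infty$-derivations buys a clean two-case finite induction; your route avoids Theorem~\ref{inf <-> omega} but must absorb the $(\omega)$ rule, and for that the claimed equality $v(\cdot)=1$ has to be weakened to $\Box_0 v(\cdot)=1$ before the descent argument goes through.
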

\begin{proof}
Assume $\Sigma;\Gamma \Vdash \varphi$. In addition, assume we have a $\Box$-founded $\mathsf{GLP}$-algebra $\mathcal{A}$ together with a valuation $v$ such that $\Box_0 v(\xi)=1$ for every $\xi \in \Sigma$. Applying Theorem \ref{inf <-> omega}, we find an $\infty$-derivation $\delta$ with the root marked by $\varphi$ in which all boxed assumption leafs are marked by some elements of $\Sigma$ and all non-boxed assumption leafs are marked by some elements of $\Gamma$. By induction on $\lvert \delta \rvert $, we prove that 
$v(\varphi) \in \langle \{ v(\psi) \mid \psi \in \Gamma \} \rangle$.

If $\varphi$ is an axiom of $\mathsf{GLP}$ or an element of $\Gamma$, then we obtain the required statement immediately.
Otherwise, consider the lowermost application of an inference rule in $\delta$. 

Case 1. Suppose that $\delta$ has the form
\begin{gather*}
\AXC{$\delta^\prime$}
\noLine
\UIC{$\vdots$}
\noLine
\UIC{$\eta$}
\AXC{$\delta^{\prime\prime}$}
\noLine
\UIC{$\vdots$}
\noLine
\UIC{$\eta \to \varphi$}
\LeftLabel{$\mathsf{mp}$}
\RightLabel{ .}
\BIC{$A$}
\DisplayProof
\end{gather*}
By the induction hypotheses for $ \delta^\prime $ and $\delta^{\prime\prime} $, we have 
$v(\eta) \in \langle \{ v(\psi) \mid \psi \in \Gamma \} \rangle$ and $v(\eta\to \varphi) \in \langle \{ v(\psi) \mid \psi \in \Gamma \} \rangle$. Therefore, $v(\eta) \wedge v(\eta\to \varphi) \in \langle \{ v(\psi) \mid \psi \in \Gamma \} \rangle$. Since $v(\eta) \wedge v(\eta\to \varphi) \leqslant v(\psi)$, we obtain
$v(\varphi) \in \langle \{ v(\psi) \mid \psi \in \Gamma \} \rangle$.

Case 2. Suppose that $\delta$ has the form
\begin{gather*}
\AXC{$\delta^\prime$}
\noLine
\UIC{$\vdots$}
\noLine
\UIC{$\eta$}
\LeftLabel{$\mathsf{nec}$}
\RightLabel{ ,}
\UIC{$\Box_0 \eta$}
\DisplayProof
\end{gather*}
where $\Box_0 \eta =\varphi$. By Lemma \ref{global inf <-> global omega}, we have $\Sigma \Vdash_g \eta$. 
From Lemma \ref{algebraic soundness}, we see that $\Sigma \VDash_g  \eta$. Applying Lemma \ref{algebraic lemma}, we also have $\Sigma, \Sigma \VDash  \eta$. Therefore, $v(\eta) \in \langle \{ v(\xi) \mid \xi \in \Sigma \} \rangle$ and there exists a finite subset $\Sigma_0$ of $\Sigma$ such that $\bigwedge \{ v(\xi) \mid \xi \in \Sigma_0 \} \leqslant v(\eta)$. Consequently, $\bigwedge \{ \Box_0 v(\xi) \mid \xi \in \Sigma_0 \} \leqslant \Box_0 v(\eta)$. We obtain $v(\varphi)= \Box_0 v(\eta) =1 $ from the assumption $\Box_0v(\xi)=1$ for every $\xi \in \Sigma$. We conclude that $v(\varphi) \in \langle \{ v(\psi) \mid \psi \in \Gamma \} \rangle$.
\end{proof}

\begin{thm}[algebraic completeness]
\label{algebraic completeness}
For any sets of formulas $\Sigma$ and $\Gamma$, and any formula $\varphi$, we have
\[\Sigma;\Gamma \Vdash \varphi \Longleftrightarrow  \Sigma;\Gamma \VDash \varphi.\]
\end{thm}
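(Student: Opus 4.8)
The implication $\Sigma;\Gamma \Vdash \varphi \Rightarrow \Sigma;\Gamma \VDash \varphi$ is Proposition~\ref{algebraic soundness 2}, so the work lies in the converse. The plan is to argue by contraposition via a Lindenbaum construction: assuming $\Sigma;\Gamma \not\Vdash \varphi$, I would exhibit a single $\Box$-founded $\mathsf{GLP}$-algebra $\mathcal{A}$ with a valuation $v$ such that $\Box_0 v(\xi)=1$ for every $\xi\in\Sigma$ while $v(\varphi)\notin\langle\{v(\psi)\mid\psi\in\Gamma\}\rangle$, which is exactly a refutation of $\Sigma;\Gamma \VDash \varphi$.

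Concretely, define $\psi_1 \equiv \psi_2$ to mean $\Sigma;\emptyset \Vdash \psi_1 \to \psi_2$ and $\Sigma;\emptyset \Vdash \psi_2 \to \psi_1$, let $\mathcal{A}_\Sigma := \mathit{Fm}/{\equiv}$ with operations induced by the connectives, and let $v(\psi):=[\psi]$ be the canonical valuation. That $\equiv$ is a congruence and $\mathcal{A}_\Sigma$ is a $\mathsf{GLP}$-algebra is routine bookkeeping: the Boolean part uses only the propositional axioms together with $\mathsf{mp}$; well-definedness of $\Box_i$ on classes and the Magari identities for each $\Box_i$ follow from axioms~(ii) and~(iii) combined with $\mathsf{nec}$ and axiom~(v) (the latter used to pass from $\Box_0$ to $\Box_i$, and also to obtain $\Box_i 1 = 1$ from $\Sigma;\emptyset\Vdash\top$); and the two remaining $\mathsf{GLP}$-algebra conditions are verbatim axioms~(iv) and~(v). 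Moreover $\Box_0 v(\xi) = [\Box_0\xi] = 1$ for $\xi\in\Sigma$, because the derivation consisting of the single leaf $\xi$ followed by an application of $\mathsf{nec}$ witnesses $\Sigma;\emptyset \Vdash \Box_0\xi$.

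The essential step is $\Box$-foundedness of $\mathcal{A}_\Sigma$. Suppose $([\chi_i])_{i\in\mathbb{N}}$ is a sequence of elements with $\Box_0[\chi_{i+1}]\leqslant[\chi_i]$ in $\mathcal{A}_\Sigma$, i.e.\ $\Sigma;\emptyset\Vdash\Box_0\chi_{i+1}\to\chi_i$ for every $i$, witnessed by $\omega$-derivations $\mathcal{D}_i$ all of whose assumption leaves are boxed and lie in $\Sigma$. Assembling them,
\begin{gather*}
\AXC{$\mathcal{D}_0$}
\noLine
\UIC{$\vdots$}
\noLine
\UIC{$\Box_0\chi_1\to\chi_0$}
\AXC{$\mathcal{D}_1$}
\noLine
\UIC{$\vdots$}
\noLine
\UIC{$\Box_0\chi_2\to\chi_1$}
\AXC{$\mathcal{D}_2$}
\noLine
\UIC{$\vdots$}
\noLine
\UIC{$\Box_0\chi_3\to\chi_2$}
\AXC{$\dots$}
\LeftLabel{$\omega$}
\QIC{$\chi_0$}
\DisplayProof
\end{gather*}
is an $\omega$-derivation of $\chi_0$ in which, again, every assumption leaf is boxed and lies in $\Sigma$; hence $\Sigma;\emptyset\Vdash\chi_0$, i.e.\ $[\chi_0]=1$, as $\Box$-foundedness requires. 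Finally, putting $F := \langle\{[\psi]\mid\psi\in\Gamma\}\rangle$, membership $[\varphi]\in F$ amounts to $\Sigma;\emptyset\Vdash (\psi_1\wedge\dots\wedge\psi_k)\to\varphi$ for some $\psi_1,\dots,\psi_k\in\Gamma$; a deduction-theorem-style equivalence identifies this with $\Sigma;\{\psi_1,\dots,\psi_k\}\Vdash\varphi$, and since the finite main fragment of any $\omega$-derivation already contains all of its non-boxed assumption leaves, this is in turn equivalent to $\Sigma;\Gamma\Vdash\varphi$. Thus $[\varphi]\in F \Leftrightarrow \Sigma;\Gamma\Vdash\varphi$, so if $\Sigma;\Gamma\not\Vdash\varphi$ then $v(\varphi)\notin F$ in the $\Box$-founded $\mathsf{GLP}$-algebra $\mathcal{A}_\Sigma$, refuting $\Sigma;\Gamma\VDash\varphi$.

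I expect two points to carry the real weight. The first is the $\Box$-foundedness argument just sketched: it is precisely here that the rule~($\omega$) is indispensable, the analogous construction over the ordinary calculus $\mathsf{GLP}$ (without infinitary derivations) being under no obligation to yield a $\Box$-founded algebra. The second is the hard direction of the deduction-theorem equivalence, namely $\Sigma;\Gamma'\Vdash\varphi \Rightarrow \Sigma;\emptyset\Vdash\bigwedge\Gamma'\to\varphi$ for finite $\Gamma'$, proved by induction on the local height: the $\mathsf{nec}$ case and the~($\omega$) case cannot be handled by the naive "prefix every formula with $\bigwedge\Gamma'\to$" transformation, and instead require re-applying $\mathsf{nec}$ (resp.\ re-applying~($\omega$) with leftmost premise of the form $\Box_0\chi_1\to(\bigwedge\Gamma'\to\chi_0)$ and the remaining, boxed, premises unchanged).
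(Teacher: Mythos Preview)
Your proposal is correct and follows essentially the same Lindenbaum--Tarski approach as the paper: build the quotient algebra over the theory $\{\theta \mid \Sigma;\emptyset \Vdash \theta\}$, verify $\Box$-foundedness by assembling the witnessing derivations under a single application of~($\omega$), and link membership of $[\varphi]$ in the filter generated by $\{[\psi]\mid\psi\in\Gamma\}$ back to $\Sigma;\Gamma \Vdash \varphi$. One small simplification: your contrapositive only requires the easy implication $[\varphi]\in F \Rightarrow \Sigma;\Gamma\Vdash\varphi$ (one application of $\mathsf{mp}$), so the ``hard direction'' of the deduction theorem you flag in the last paragraph is not actually needed for the argument.
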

\begin{proof}
The left-to-right implication follows from Proposition \ref{algebraic soundness 2}. We prove the converse. Assume $\Sigma ; \Gamma \VDash \varphi$. Consider the theory $T=\{ \theta \in \mathit{Fm} \mid \Sigma ; \emptyset \Vdash \theta\}$. We see that $T$ contains all axioms of $\mathsf{GLP}$ and is closed under the rules ($\mathsf{mp}$) and ($\mathsf{nec}$). We define an equivalence relation $\sim_T$ on the set of formulas $Fm$ by putting $\mu \sim_T \rho$ if and only of $(\mu \leftrightarrow \rho) \in T$. Let us denote the equivalence class of $\mu$ by $[\mu]_T$. Applying the Lindenbaum-Tarski construction, we obtain a $\mathsf{GLP}$-algebra $\mathcal{L}_T$ on the set of equivalence classes of formulas, where $[\mu]_T\wedge [\rho]_T = [\mu\wedge \rho]_T$, $[\mu]_T \vee [\rho]_T = [\mu\vee \rho]_T$, $[\mu]_T\to [\rho]_T = [\mu\to \rho]_T$, $0 = [\bot]_T$, $1= [\top]_T$ and $ \Box_i [\mu]=[\Box_i \mu]$.


Let us check that the algebra $\mathcal{L}_T$ is $\Box$-founded. Assume we have a sequence of formulas $(\mu_i)_{i\in \mathbb{N}}$ such that $\Box_0 [\mu_{i+1}]_T\leqslant [\mu_{i}]_T$. We have $[\Box_0 \mu_{i+1} \to \mu_i]_T=1$ and $(\Box_0 \mu_{i+1} \to \mu_i) \in T$. For every $i\in \mathbb{N}$, there exists an $\omega$-derivation $\delta_i$ for the formula $\Box_0 \mu_{i+1} \rightarrow \mu_i$ such that all assumption leaves of $\delta_i$ are boxed and marked by some elements of $\Sigma$. We obtain the following $\omega$-derivation for the formula $\mu_0$:
\begin{gather*}
\AXC{$\delta_0$}
\noLine
\UIC{$\vdots$}
\noLine
\UIC{$\Box_0\mu_{1} \rightarrow \mu_0$}
\AXC{$\delta_1$}
\noLine
\UIC{$\vdots$}
\noLine
\UIC{$\Box_0\mu_{2} \rightarrow \mu_1$}
\AXC{$\delta_2$}
\noLine
\UIC{$\vdots$}
\noLine
\UIC{$\Box_0\mu_{3} \rightarrow \mu_2$}
\AXC{$\dots$}
\LeftLabel{$\omega$}
\RightLabel{ ,}
\QIC{$ \mu_0$}
\DisplayProof 
\end{gather*}
where all assumption leaves are boxed and marked by some elements of $\Sigma$. Hence, $\mu_0 \in T$ and $[\mu_0]_T =[\top]_T= 1$. We conclude that the $\mathsf{GLP}$-algebra $\mathcal{L}_T$ is $\Box$-founded.

Consider the valuation $v \colon \theta \mapsto [\theta]_T$ in the $\mathsf{GLP}$-algebra $\mathcal{L}_T$. Since $\{\Box_0 \xi \mid \xi \in \Sigma\} \subset T$, we have $\Box_0 v(\xi)=1$ for any $\xi \in \Sigma$. From the assumption $ \Sigma ; \Gamma \VDash \varphi$, we obtain that $v(\varphi) \in \langle \{v(\psi)\mid \psi \in \Gamma\}\rangle$. Consequently, there is a finite subset $\Gamma_0$ of $\Gamma$ such that $\bigwedge \{v(\psi)\mid \psi \in \Gamma_0\} \leqslant v(\varphi)$ in $\mathcal{L}_T$. We have $(\bigwedge \{[\psi]_T\mid \psi \in \Gamma_0\} \to [\varphi]_T) =1$ and $(\bigwedge \Gamma_0 \to \varphi) \in T$, i.e. $\Sigma ; \emptyset \Vdash \bigwedge \Gamma_0 \to \varphi$. Notice that $\emptyset ; \Gamma \Vdash \bigwedge \Gamma_0$. Therefore, $\Sigma ; \Gamma \Vdash \bigwedge \Gamma_0$ and $\Sigma ; \Gamma \Vdash \Gamma_0 \to \varphi$. Applying an inference rule ($\mathsf{mp}$), we conclude $\Sigma ; \Gamma \Vdash  \varphi$.  

\end{proof}

\section{Neighbourhood semantics}
\label{s3}
{

In this section, we recall neighbourhood semantics of the provability logic $\mathsf{GLP}$ and consider local, global and global-local semantic consequence relations over the class of neighbourhood $\mathsf{GLP}$-frames. 
}


Recall that a point $x$ of a topological space is called \emph{isolated} if the set $\{x\}$ is open. A topological space $ ( X, \tau)$ is \emph{scattered} if each non-empty subset of $X$ (as a topological space with the inherited topology) has an isolated point.
 
A subset $U$ of a topological space $ ( X, \tau)$ is a \emph{neighbourhood} of $x\in X$ if $U$ is open and contains $x$. A topological space is $T_d$ if every point of the space is closed in some of its neighbourhoods (with respect to the inherited topology).
\begin{prop}[see Lemma 4.8 from \cite{Shamkanov2020}]\label{T_d-prop}
Any scattered topological space is $T_d$.  
\end{prop}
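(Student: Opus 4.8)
The statement to prove is Proposition: \emph{Any scattered topological space is $T_d$.}

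\medskip

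The plan is to argue directly from the definitions, using the defining property of scatteredness applied to a cleverly chosen subspace. Given a scattered space $(X,\tau)$ and an arbitrary point $x \in X$, I need to produce a neighbourhood $U$ of $x$ in which $\{x\}$ is closed, i.e. such that $x$ has a neighbourhood $U$ with $U \setminus \{x\}$ open in the subspace topology on $U$ (equivalently, $x$ is not a limit point of any point of $U \setminus \{x\}$ in a way that accumulates at $x$; most cleanly: there is an open $U \ni x$ with $U \setminus \{x\}$ open in $X$).

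\medskip

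First I would apply scatteredness to a subset that isolates the relevant structure near $x$. The natural candidate is to look at the closure or at a set built from $x$ together with points that might witness failure of $T_d$. Concretely: consider the subspace $Y = \overline{\{x\}}$ (the closure of the singleton), or alternatively the set $\{x\} \cup (X \setminus \overline{\{x\}})$; but the cleanest route is the following. Since $(X,\tau)$ is scattered, the subspace $\overline{\{x\}}$ (which is non-empty, as it contains $x$) has an isolated point $y$. I would then show that in fact $x$ itself must be isolated in $\overline{\{x\}}$: because every open set meeting $\overline{\{x\}}$ must contain $x$ (that is what it means for $x$ to be in the closure of no — wait, rather: every non-empty open subset of $\overline{\{x\}}$ contains $x$, since any open $V$ with $V \cap \overline{\{x\}} \neq \emptyset$ intersects $\{x\}$ hence contains $x$). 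So if $\{y\}$ is open in $\overline{\{x\}}$ and non-empty, it contains $x$, forcing $y = x$. Thus $\{x\}$ is open in $\overline{\{x\}}$, so there is an open $U \subseteq X$ with $U \cap \overline{\{x\}} = \{x\}$. This $U$ is a neighbourhood of $x$, and I claim $x$ is closed in $U$: indeed the closure of $\{x\}$ taken in the subspace $U$ is $\overline{\{x\}} \cap U = \{x\}$. Hence $\{x\}$ is closed in the neighbourhood $U$, which is exactly the $T_d$ condition at $x$. Since $x$ was arbitrary, $(X,\tau)$ is $T_d$.

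\medskip

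I do not anticipate a serious obstacle here; the proof is short once one spots that the right subspace to feed into the scatteredness hypothesis is $\overline{\{x\}}$. The only point requiring mild care is the bookkeeping about subspace topologies: verifying that "$\{x\}$ open in $\overline{\{x\}}$" unpacks to the existence of an ambient open $U$ with $U \cap \overline{\{x\}} = \{x\}$, and that the subspace closure of $\{x\}$ in $U$ equals $\overline{\{x\}} \cap U$. Both are routine facts about the inherited topology. An alternative, essentially equivalent, presentation avoids naming $\overline{\{x\}}$: apply scatteredness directly to $X$ restricted appropriately, but routing through $\overline{\{x\}}$ makes the "the isolated point must be $x$" step transparent, so that is the version I would write up.
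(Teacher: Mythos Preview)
Your argument is correct. Applying scatteredness to the subspace $\overline{\{x\}}$ and observing that every non-empty relatively open subset of $\overline{\{x\}}$ must contain $x$ (since any point of $\overline{\{x\}}$ has $x$ in each of its open neighbourhoods) forces the isolated point to be $x$ itself; the open $U$ with $U\cap\overline{\{x\}}=\{x\}$ then witnesses the $T_d$ condition, as you verify via the subspace-closure identity.

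As for comparison: the paper does not supply its own proof of this proposition but simply cites Lemma~4.8 of \cite{Shamkanov2020}, so there is nothing in the present paper to compare your argument against. Your proof is a standard and clean one; the only cosmetic issue is the stream-of-consciousness aside (``wait, rather:''), which you would of course excise in a final write-up.
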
 

An \emph{Esakia frame} (or a \emph{Magari frame}) $\mathcal{X} = ( X, \Box )$ is a set $X$ together with a mapping $\Box \:\colon \mathcal{P}(X) \to \mathcal{P}(X)$ such that the powerset Boolean algebra $\mathcal{P}(X)$ with the mapping $\Box$ forms a Magari algebra. 

We briefly recall a connection between scattered topological spaces and Esakia frames 
(cf. \cite{Beklemishev2014}). Note that we allow Esakia frames and topological spaces to be empty. 

A subset $U$ of a topological space $ ( X, \tau)$ is a \emph{punctured neighbourhood} of $x\in X$ if $x \nin U$ and $U\cup \{x \}$ is open.
For $V\subset X$, the \emph{derived set $d_\tau (V)$ of $V$}
is the set of limit points of $V$: 
$$x \in d_\tau (V) \Longleftrightarrow \forall U \in \tau \; (x\in U \Rightarrow \exists y \neq x \; (y \in U \cap V))\;.$$
The \emph{co-derived set $cd_\tau(V)$ of $V$} is defined as $X \setminus d_\tau (X \setminus V)$. 
By definition, $x \in cd_\tau(V)$ if and only if there exists a punctured neighbourhood of $x$ entirely contained in $V$. Notice that $V$ is open if and only if $V \subset cd_\tau(V)$.

\begin{prop}[{Simmons \cite{Simmons1975}, Esakia \cite{Esakia1981}}]\label{from scattered spaces to Esakia frames}
If $( X, \tau)$ is a scattered topological space, then $( X, \mathit{cd}_\tau)$ is an Esakia frame.
\end{prop}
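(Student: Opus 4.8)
**Proof proposal for Proposition (Simmons, Esakia): if $(X,\tau)$ is a scattered topological space, then $(X, \mathit{cd}_\tau)$ is an Esakia frame.**

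The plan is to verify directly that the powerset Boolean algebra $\mathcal{P}(X)$ equipped with the operation $\Box := \mathit{cd}_\tau$ satisfies the three Magari identities: $\Box X = X$, $\Box(V \cap W) = \Box V \cap \Box W$, and $\Box(\Box V \to V) = \Box V$, where $\Box V \to V$ denotes $(X \setminus \Box V) \cup V$. I will work throughout with the characterization recalled just before the statement: $x \in \mathit{cd}_\tau(V)$ iff some punctured neighbourhood of $x$ is contained in $V$, equivalently $x \notin d_\tau(X \setminus V)$.

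First, $\Box X = X$ is immediate since every point has a punctured neighbourhood (namely $\emptyset$, as $\emptyset \cup \{x\} = \{x\}$ need not be open — so more carefully: if $x$ is isolated, $\emptyset$ works; if not, any open $U \ni x$ gives the punctured neighbourhood $U \setminus \{x\} \subseteq X$). Actually the cleanest route is $d_\tau(\emptyset) = \emptyset$, so $\mathit{cd}_\tau(X) = X \setminus d_\tau(\emptyset) = X$. Second, for the multiplicativity identity, I would first establish monotonicity of $\mathit{cd}_\tau$ (clear from the punctured-neighbourhood description), which gives $\Box(V \cap W) \subseteq \Box V \cap \Box W$; for the reverse inclusion, if $x$ has a punctured neighbourhood $U_1 \subseteq V$ and a punctured neighbourhood $U_2 \subseteq W$, then $U_1 \cap U_2$ is a punctured neighbourhood of $x$ contained in $V \cap W$ (the intersection of two open sets each containing $x$ after adding $x$ back is open). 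Equivalently one can argue via $d_\tau(A \cup B) = d_\tau(A) \cup d_\tau(B)$, a standard fact about derived sets in any topological space.

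The substantive step — and the main obstacle — is the diagonal/Löb identity $\Box(\Box V \to V) = \Box V$. The inclusion $\Box V \subseteq \Box(\Box V \to V)$ follows from monotonicity together with $\Box V \subseteq \Box V \to V$ (trivially, since $V \subseteq (X\setminus \Box V)\cup V$, wait — we need $\Box V \subseteq (X \setminus \Box V) \cup V$, i.e. $\Box V \cap \Box V \subseteq V$, which holds because every open set $V$ satisfies $V \subseteq \mathit{cd}_\tau(V) = \Box V$... I must instead show $\Box V \subseteq \Box V \to V$ directly: if $x \in \Box V$ then $x$ has a punctured neighbourhood in $V$, so $x \in \mathit{cd}_\tau(V)$; but I want $x \in (X \setminus \Box V) \cup V$). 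The correct reasoning: one shows $\Box V \setminus V$ has empty derived set's complement behaviour — precisely here is where \emph{scatteredness} enters. The key lemma is: for scattered $(X,\tau)$, if $A \subseteq d_\tau(A)$ then $A = \emptyset$ (no nonempty subset equals a subset of its own derived set, since a nonempty $A$ has an isolated point in the subspace topology, which cannot be a limit point of $A$). Applying this: to prove $\Box(\Box V \to V) \subseteq \Box V$, suppose $x \in \mathit{cd}_\tau((X\setminus\Box V)\cup V)$ but $x \notin \Box V$; I would consider the set $A$ of all such "bad" points, show $A \subseteq d_\tau(A)$ using that $x \notin \Box V = \mathit{cd}_\tau(V)$ means $x \in d_\tau(X \setminus V)$, combined with the punctured neighbourhood witnessing $x \in \mathit{cd}_\tau((X\setminus \Box V) \cup V)$, to produce nearby bad points; then scatteredness forces $A = \emptyset$. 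I expect the bookkeeping in this last argument — correctly chasing which punctured neighbourhood witnesses what, and verifying the points produced are genuinely in $A$ — to be the only delicate part; everything else is routine Boolean and point-set manipulation.
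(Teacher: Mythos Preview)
The paper does not prove this proposition; it is stated with attribution to Simmons and Esakia and no argument is given. So there is nothing to compare your approach against, and I will simply assess correctness.

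Your outline for $\Box X = X$ and $\Box(V\cap W)=\Box V\cap\Box W$ is fine. For the L\"ob identity, your argument for the \emph{hard} inclusion $\Box(\Box V\to V)\subseteq\Box V$ via scatteredness is correct and can be completed exactly as you indicate: setting $A=\mathit{cd}_\tau(W)\setminus\mathit{cd}_\tau(V)$ with $W=(X\setminus\mathit{cd}_\tau(V))\cup V$, one checks $A\subseteq d_\tau(A)$ (for $x\in A$ with punctured neighbourhood $U\subseteq W$, any open $O\ni x$ meets $X\setminus V$ inside $U$ at some $y\neq x$; then $y\notin V$ and $y\in W$ force $y\notin\mathit{cd}_\tau(V)$, while $U\cup\{x\}\subseteq W$ gives $y\in\mathit{cd}_\tau(W)$), and scatteredness yields $A=\emptyset$.

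However, your treatment of the \emph{easy} inclusion $\Box V\subseteq\Box(\Box V\to V)$ is muddled. You write that it ``follows from monotonicity together with $\Box V\subseteq\Box V\to V$'', then try to verify the latter and cannot --- because it is false in general (it would say $\mathit{cd}_\tau(V)\subseteq V$). What monotonicity actually needs is the trivial inclusion $V\subseteq(X\setminus\Box V)\cup V$, which immediately gives $\Box V\subseteq\Box\bigl((X\setminus\Box V)\cup V\bigr)$. Replace your discussion of the easy direction with this one-line observation and the proof is complete.
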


\begin{prop}[{Esakia \cite{Esakia1981}}]
If $( X, \Box)$ is an Esakia frame, then $X$ bears a unique topology $\tau$ for which $\Box = \mathit{cd}_\tau$. Moreover, the space $( X,\tau)$ is scattered.
\end{prop}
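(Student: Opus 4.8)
The plan is to exhibit the topology explicitly, verify $\Box = \mathit{cd}_\tau$, and then read off scatteredness and uniqueness. First I would define $\tau := \{ U \subseteq X \mid U \subseteq \Box U \}$ and check it is a topology on $X$. Since $\Box X = X$ we have $X \in \tau$, and $\emptyset \in \tau$ trivially. Closure under finite intersections uses $\Box(U \cap V) = \Box U \cap \Box V$, while closure under arbitrary unions uses monotonicity of $\Box$: from $U_i \subseteq \Box U_i \subseteq \Box(\bigcup_j U_j)$ one gets $\bigcup_i U_i \subseteq \Box(\bigcup_j U_j)$. (Equivalently, $\mathrm{int}(V) := V \cap \Box V$ is a Kuratowski interior operator, idempotence following from $\Box V \subseteq \Box\Box V$.) Uniqueness is then immediate: the excerpt already records that a set $V$ is open in a topology precisely when $V \subseteq \mathit{cd}(V)$; hence if some topology $\sigma$ satisfies $\mathit{cd}_\sigma = \Box$, its open sets are exactly the members of $\tau$, so $\sigma = \tau$. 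It remains only to prove $\mathit{cd}_\tau = \Box$ and scatteredness for this particular $\tau$.

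The heart of the argument is an irreflexivity lemma: $x \in \Box(X \setminus \{x\})$ for every $x \in X$. I would prove this by a one-step application of the Löb identity to $W := X \setminus \{x\}$. If $x \notin \Box W$, then $\Box W \cap \{x\} = \emptyset$, so $\Box W \to W = X \setminus (\Box W \cap \{x\}) = X$; the axiom $\Box(\Box W \to W) = \Box W$ then forces $\Box W = \Box X = X \ni x$, a contradiction, so $x \in \Box W$ after all. Granting this lemma, I establish $\mathit{cd}_\tau(V) = \Box V$ as follows. Unwinding the definitions, $x \in \mathit{cd}_\tau(V)$ iff $x \in \mathrm{int}(V \cup \{x\})$ iff $x \in \Box(V \cup \{x\})$. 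The inclusion $\Box V \subseteq \Box(V \cup \{x\})$ holds by monotonicity, giving one direction. For the converse I may assume $x \notin V$ (the case $x \in V$ being trivial); then $V \cup (X \setminus \{x\}) = X \setminus \{x\}$, and the homomorphism property gives $\Box(V \cup \{x\}) \cap \Box(X \setminus \{x\}) = \Box\bigl((V \cup \{x\}) \cap (X \setminus \{x\})\bigr) = \Box V$. Since $x$ lies in both factors on the left (the second by the irreflexivity lemma), we conclude $x \in \Box V$, as required.

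Finally, scatteredness follows from the Löb rule that the only $W$ with $\Box W \subseteq W$ is $W = X$: indeed $\Box W \subseteq W$ gives $\Box W \to W = X$, whence $\Box W = \Box(\Box W \to W) = \Box X = X \subseteq W$. A point $s \in S$ is isolated in the subspace $S$ exactly when some open $U$ satisfies $U \setminus \{s\} \subseteq X \setminus S$, i.e. $s \in \mathit{cd}_\tau(X \setminus S) = \Box(X \setminus S)$. If a nonempty $S$ had no isolated point, then $S \subseteq X \setminus \Box(X \setminus S)$, which rearranges to $\Box(X \setminus S) \subseteq X \setminus S$; by the Löb rule $X \setminus S = X$, contradicting $S \neq \emptyset$. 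Hence every nonempty subset has an isolated point and $(X, \tau)$ is scattered. I expect the main obstacle to be the irreflexivity lemma together with the passage from $\Box(V \cup \{x\})$ to $\Box V$; everything else is routine once $\Box = \mathit{cd}_\tau$ is in hand.
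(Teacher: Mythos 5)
Your proof is correct. One thing to note up front: the paper does not prove this proposition at all --- it is imported by citation from Esakia's 1981 work (just as the companion Proposition, the Simmons--Esakia direction from scattered spaces to Esakia frames, is cited without proof). So there is no in-paper argument to compare against; what you have written is a correct, self-contained reconstruction of the classical result. Your route is the standard one: take $\tau = \{U \subseteq X \mid U \subseteq \Box U\}$, observe that $\mathrm{int}_\tau(V) = V \cap \Box V$ (idempotence via $\Box V \subseteq \Box\Box V$, which the paper records for Magari algebras), and get uniqueness for free from the paper's own remark that $V$ is open iff $V \subseteq \mathit{cd}_\tau(V)$. The two places where real work happens are handled correctly: the irreflexivity lemma $x \in \Box(X \setminus \{x\})$ is exactly a one-step Löb argument (if $x \notin \Box W$ for $W = X\setminus\{x\}$ then $\Box W \to W = X$, so $\Box W = \Box X = X$, contradiction), and the passage from $x \in \Box(V \cup \{x\})$ to $x \in \Box V$ via $\Box(V\cup\{x\}) \cap \Box(X\setminus\{x\}) = \Box V$ is a clean use of the meet-homomorphism identity. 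Your scatteredness argument, reducing ``no isolated points in $S$'' to $\Box(X\setminus S) \subseteq X\setminus S$ and applying the induction rule derived from Löb, is also the standard and correct one. No gaps.
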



A \emph{$\mathsf{GLP}$-space} is a multitopological space $( X, \tau_0, \tau_1, \dotsc) $, where, for each $i \in \mathbb{N}$, $\tau_i$ is scattered, $\tau_i \subset \tau_{i+1}$, and $\mathit{d}_{\tau_i}(V)\in\tau_{i+1}$ for any $V\in \mathcal{P}(X) $.

A \emph{neighbourhood $\mathsf{GLP}$-frame}  $\mathcal{X} = ( X, \Box_0, \Box_1, \dotsc )$ is a set $X$ together with a sequence of unary operations $\Box_0, \Box_1, \dotsc$ on $\mathcal{P}(X)$ such that the powerset Boolean algebra $\mathcal{P}(X)$ with the given operations forms a $\mathsf{GLP}$-algebra. Elements of $X$ are called \emph{worlds} of the frame $\mathcal{X} $. A \emph{neighbourhood $\mathsf{GLP}$-model} is a pair $\mathcal {M}= ( \mathcal{X}, v )$, where $\mathcal{X}$ is a neighbourhood $\mathsf{GLP}$-frame and $v$ is a valuation in the powerset $\mathsf{GLP}$-algebra of $\mathcal{X}$.
A formula $\varphi$ is \emph{true at a world $x$ of a model $\mathcal{M}$}, written as $\mathcal{M}, x \vDash \varphi$, if $x \in v(\varphi)$. 
A formula $\varphi$ is called \emph{true in $\mathcal{M}$}, written as $\mathcal{M} \vDash \varphi$, if $\varphi$ is true at all worlds of $\mathcal{M}$. 
 
\begin{prop}[{see Proposition 4 from \cite{Beklemishev2014}}] \text{ } \label{GLP-spaces=GLP-frames}
\begin{enumerate}
\item If $( X, \tau_0, \tau_1, \dotsc)$ is a $\mathsf{GLP}$-space, then $( X, \mathit{cd}_{\tau_0}, \mathit{cd}_{\tau_1}, \dotsc)$ is a $\mathsf{GLP}$-frame.
\item If $( X, \Box_0, \Box_1, \dotsc) $ is a $\mathsf{GLP}$-frame, then $X$ bears a unique series of topologies $\tau_0, \tau_1, \dotsc$ such that $\Box_i = \mathit{cd}_{\tau_i}$ for every $i\in \mathbb{N}$. Moreover, the multitopological space $( X, \tau_0, \tau_1, \dotsc)$ is a $\mathsf{GLP}$-space.
\end{enumerate}
\end{prop}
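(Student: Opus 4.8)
The statement amounts to the single-modal correspondence already recalled --- Proposition~\ref{from scattered spaces to Esakia frames} (Simmons--Esakia) and Esakia's representation theorem for Esakia frames \cite{Esakia1981} --- applied pointwise in $i$, plus a short dictionary translating the two extra $\mathsf{GLP}$-conditions. The plan is to fix that dictionary and then read off both implications. I will use: (a) a topology $\tau$ on $X$ is scattered iff $(\mathcal{P}(X),\mathit{cd}_\tau)$ is a Magari algebra (this is exactly what the two recalled propositions assert); (b) if $\Box=\mathit{cd}_\tau$ then $\Diamond V = X\setminus\Box(X\setminus V) = X\setminus\mathit{cd}_\tau(X\setminus V)=\mathit{d}_\tau(V)$ for every $V\subseteq X$, which is the one-line unfolding of $\mathit{cd}_\tau(W)=X\setminus\mathit{d}_\tau(X\setminus W)$; and (c) a set $U\subseteq X$ is $\tau$-open iff $U\subseteq\mathit{cd}_\tau(U)$, as already noted in the excerpt. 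I will also use the elementary fact that for topologies $\tau\subseteq\tau'$ on the same set, every $\tau$-punctured neighbourhood is a $\tau'$-punctured neighbourhood, whence $\mathit{cd}_\tau(V)\subseteq\mathit{cd}_{\tau'}(V)$ for all $V$; and, conversely, that if $\mathit{cd}_\tau(V)\subseteq\mathit{cd}_{\tau'}(V)$ for all $V$ then $\tau\subseteq\tau'$ by~(c).

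\textbf{Part 1.} Let $(X,\tau_0,\tau_1,\dotsc)$ be a $\mathsf{GLP}$-space and put $\Box_i:=\mathit{cd}_{\tau_i}$. Since each $\tau_i$ is scattered, Proposition~\ref{from scattered spaces to Esakia frames} gives that $(\mathcal{P}(X),\Box_i)$ is a Magari algebra, which is condition~(1) in the definition of a $\mathsf{GLP}$-algebra. From $\tau_i\subseteq\tau_{i+1}$ and the observation above, $\Box_i a\leqslant\Box_{i+1}a$ for all $a$, which is condition~(3). Finally, the requirement $\mathit{d}_{\tau_i}(V)\in\tau_{i+1}$ rewrites by~(b) as $\Diamond_i V\in\tau_{i+1}$, hence by~(c) as $\Diamond_i V\subseteq\mathit{cd}_{\tau_{i+1}}(\Diamond_i V)=\Box_{i+1}\Diamond_i V$, which is condition~(2). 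Therefore $(X,\mathit{cd}_{\tau_0},\mathit{cd}_{\tau_1},\dotsc)$ is a $\mathsf{GLP}$-frame.

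\textbf{Part 2.} Let $(X,\Box_0,\Box_1,\dotsc)$ be a $\mathsf{GLP}$-frame. Each $\mathcal{A}_i=(\mathcal{P}(X),\Box_i)$ is a Magari algebra, i.e.\ $(X,\Box_i)$ is an Esakia frame, so by Esakia's representation theorem there is a unique topology $\tau_i$ on $X$ with $\Box_i=\mathit{cd}_{\tau_i}$, and this $\tau_i$ is scattered; uniqueness of the whole family is immediate from the uniqueness of each $\tau_i$. For $\tau_i\subseteq\tau_{i+1}$: if $U\in\tau_i$ then by~(c) and condition~(3), $U\subseteq\mathit{cd}_{\tau_i}(U)=\Box_i U\leqslant\Box_{i+1}U=\mathit{cd}_{\tau_{i+1}}(U)$, so $U\in\tau_{i+1}$. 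For $\mathit{d}_{\tau_i}(V)\in\tau_{i+1}$: by~(b), $\mathit{d}_{\tau_i}(V)=\Diamond_i V$, and condition~(2) gives $\Diamond_i V\subseteq\Box_{i+1}\Diamond_i V=\mathit{cd}_{\tau_{i+1}}(\mathit{d}_{\tau_i}(V))$, so $\mathit{d}_{\tau_i}(V)$ is $\tau_{i+1}$-open by~(c). Hence $(X,\tau_0,\tau_1,\dotsc)$ is a $\mathsf{GLP}$-space.

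\textbf{Main obstacle.} There is no deep obstacle once the single-modal theorems are available; the only genuine work is checking the dictionary --- in particular identity~(b) relating $\Diamond_i$ to the derived-set operator, and the equivalence between the inclusion $\tau_i\subseteq\tau_{i+1}$ of topologies and the pointwise inequality of the co-derived-set operators --- and then keeping straight which of the two extra $\mathsf{GLP}$-conditions corresponds to which: condition~(3), i.e.\ $\Box_i a\leqslant\Box_{i+1}a$, matches monotonicity $\tau_i\subseteq\tau_{i+1}$, while condition~(2), i.e.\ $\Diamond_i a\leqslant\Box_{i+1}\Diamond_i a$, matches the closure property $\mathit{d}_{\tau_i}(V)\in\tau_{i+1}$. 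The empty case is handled vacuously, as already permitted.
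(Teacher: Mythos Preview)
Your proof is correct. Note, however, that the paper does not supply its own proof of this proposition: it is stated with the citation ``see Proposition~4 from \cite{Beklemishev2014}'' and left unproved, relying on the single-modal Simmons--Esakia and Esakia results (Propositions~\ref{from scattered spaces to Esakia frames} and the one following it) that are also merely cited. Your argument is exactly the natural one --- apply those two propositions coordinatewise and check that the two extra $\mathsf{GLP}$-conditions translate via the dictionary $\Diamond_i=\mathit{d}_{\tau_i}$ and ``$U$ open $\Leftrightarrow U\subseteq\mathit{cd}_\tau(U)$'' --- and it is presumably what the cited reference does as well.
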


In what follows, we don't distinguish $\mathsf{GLP}$-frames and the corresponding multitopological spaces in such a way that we use topological notions related to $( X, \tau_0, \tau_1, \dotsc)$ for the frame $( X, \mathit{cd}_{\tau_0}, \mathit{cd}_{\tau_1}, \dotsc)$. For example, we say that a subset $U$ is \emph{$n$-open} in $( X, \Box_0, \Box_1, \dotsc) $ if it belongs to the corresponding $n$-th topology on $X$, which is equivalent to $U \subset \Box_n U$.

For a $0$-open subset $X^\prime$ of a $\mathsf{GLP}$-frame $\mathcal{X}=(X, \Box_0, \Box_1, \dotsc)$, the \emph{open subframe of $\mathcal{X}$ determined by $X^\prime$} is defined as $(X^\prime, \Box^\prime_0, \Box^\prime_1, \dotsc)$, where $\Box^\prime_i V = X^\prime \cap \Box_i V $ for any $V \subset X^\prime$ and any $i\in \mathbb{N}$.

\begin{lem}\label{subframe}
Any open subframe $\mathcal{X}^\prime$ of a $\mathsf{GLP}$-frame $\mathcal{X}$ is a $\mathsf{GLP}$-frame. Moreover, a subset $V$ of $\mathcal{X}^\prime$  is $i$-open if and only if it is $i$-open in $\mathcal{X}$.
\end{lem}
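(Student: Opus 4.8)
The plan is to verify directly that the powerset Boolean algebra of $X'$ equipped with the operations $\Box'_i$ satisfies all the defining identities of a $\mathsf{GLP}$-algebra, and then to identify the $i$-open sets. The key observation, which should be recorded first, is that since $X'$ is $0$-open we have $X' \subseteq \Box_0 X' \subseteq \Box_i X'$ for every $i$ (using condition (3) of the definition of a $\mathsf{GLP}$-algebra), so in fact $\Box_i X' = X'$ for every $i$; more generally I would show that for $V \subseteq X'$ the value $\Box_i V$ already ``lives inside $X'$ up to $X'$'', i.e. $X' \cap \Box_i V = \Box_i(V \cup (X \setminus X'))$ or some similarly convenient rewriting. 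The cleanest route is probably to note that the map $V \mapsto V \cup (X\setminus X')$ embeds $\mathcal P(X')$ into $\mathcal P(X)$ as the principal filter above $X\setminus X'$, and that this filter is \emph{open} (closed under each $\Box_i$, using $\Box_i X' = X'$ and normality $\Box_i(A\cap B) = \Box_i A \cap \Box_i B$); then $\mathcal{X}'$ is exactly the quotient $\mathsf{GLP}$-algebra by the complementary open filter $\langle X'\rangle$. Alternatively, and perhaps more elementarily for this context, I would just check the axioms by hand.

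The hand verification proceeds identity by identity. Normality $\Box'_i(V\cap W) = \Box'_i V \cap \Box'_i W$ is immediate from $X'\cap \Box_i(V\cap W) = X' \cap \Box_i V \cap \Box_i W = (X'\cap\Box_i V)\cap(X'\cap\Box_i W)$; and $\Box'_i X' = X'\cap \Box_i X' = X'$ since $X'$ is $0$-open hence $i$-open. The inclusion conditions (2) and (3), $\Diamond'_i V \subseteq \Box'_{i+1}\Diamond'_i V$ and $\Box'_i V \subseteq \Box'_{i+1} V$, reduce to the corresponding facts in $\mathcal X$ after intersecting with $X'$, once one checks that $\Diamond'_i V = X' \setminus \Box'_i(X'\setminus V) = X' \cap \Diamond_i(V \cup (X\setminus X'))$ agrees up to $X'$ with $\Diamond_i$ of an appropriate set — this bookkeeping about relativized complements is the only mildly fiddly point. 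The Löb identity $\Box'_i(\Box'_i V \to' V) = \Box'_i V$, where $\to'$ is relative complement inside $X'$, is the main thing to get right: here $\Box'_i V \to' V = X' \setminus(\Box'_i V \setminus V) = X'\cap\big((X\setminus\Box_i V)\cup V\big)$, and applying $\Box'_i$ gives $X'\cap \Box_i\big((X\setminus\Box_i V)\cup V\big)$. To finish I would show this equals $X' \cap \Box_i V$ by sandwiching: on one hand $V \subseteq X'$ so $(X\setminus \Box_i V)\cup V \subseteq (X\setminus\Box_i V)\cup(\Box_i V\to V)$ — more carefully, I would use $X'\cap \Box_i\big((X\setminus\Box_i V)\cup V\big) \supseteq X' \cap \Box_i\big((\Box_i V \to V)\big) = X'\cap\Box_i V$ since modulo $X'$ these sets coincide, using $\Box_i X' = X'$ and normality to absorb the $X\setminus X'$ part; the reverse inclusion $X'\cap\Box_i\big((X\setminus\Box_i V)\cup V\big)\subseteq X'\cap \Box_i V$ follows from monotonicity of $\Box_i$ together with $(X\setminus\Box_i V)\cup V \subseteq (X\setminus\Box_i(X'\cap V)) \cup \ldots$, ultimately reducing to Löb in $\mathcal X$.

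For the second assertion, a subset $V\subseteq X'$ is $i$-open in $\mathcal X'$ iff $V\subseteq \Box'_i V = X'\cap\Box_i V$, which, since $V\subseteq X'$ already, is equivalent to $V\subseteq \Box_i V$, i.e. to $V$ being $i$-open in $\mathcal X$. I expect the main obstacle to be purely notational: keeping straight the difference between Boolean complement in $X$ and in $X'$, and making sure every application of normality and monotonicity is legitimate after relativization. A uniform way to sidestep most of it is to prove once and for all the lemma ``$\langle X'\rangle := \{\,A\subseteq X \mid A \supseteq X\setminus X'\,\}$ is an open filter of the $\mathsf{GLP}$-algebra $\mathcal P(X)$, and $\mathcal X'$ is isomorphic to the quotient $\mathcal P(X)/\langle X'\rangle$,'' after which both claims of the lemma are immediate consequences of general facts about quotients of $\mathsf{GLP}$-algebras by open filters; I would present the proof in that form if the surrounding text already has quotient machinery available (it does, from Lemma~\ref{algebraic lemma}).
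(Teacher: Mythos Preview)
Your proposal is correct, and the second part of the lemma is handled exactly as in the paper. The first part, however, admits a much shorter argument, which is what the paper does: instead of verifying the $\mathsf{GLP}$-algebra identities one by one, observe that the map $f\colon \mathcal P(X)\to\mathcal P(X')$, $f(V)=X'\cap V$, is a \emph{surjective homomorphism} of Boolean algebras with operators. The only thing to check is $f(\Box_i V)=\Box'_i f(V)$, and this follows in one line from $X'\subseteq \Box_0 X'\subseteq\Box_i X'$ and normality:
\[
f(\Box_i V)=X'\cap\Box_i V = X'\cap\Box_i X'\cap\Box_i V = X'\cap\Box_i(X'\cap V)=\Box'_i f(V).
\]
Since identities are preserved under surjective homomorphisms, the target is automatically a $\mathsf{GLP}$-algebra. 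Your alternative route via the open filter $\{A\subseteq X\mid A\supseteq X\setminus X'\}$ is the dual formulation of the same idea (quotient by an open filter $\Leftrightarrow$ image under a surjective homomorphism), so you were close to the paper's argument but buried it under the hand verification.

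Regarding your hand verification: it is valid, but you make the L\"ob step look harder than it is. After you reach $\Box'_i(\Box'_i V\to' V)=X'\cap\Box_i\bigl((X\setminus\Box_i V)\cup V\bigr)$, simply note that $(X\setminus\Box_i V)\cup V$ \emph{is} the Heyting implication $\Box_i V\to V$ in $\mathcal P(X)$, so L\"ob in $\mathcal X$ gives $\Box_i\bigl((X\setminus\Box_i V)\cup V\bigr)=\Box_i V$ directly; no sandwiching is needed.
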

\begin{proof}
Assume we have a $\mathsf{GLP}$-frame $\mathcal{X}=(X, \Box_0, \Box_1, \dotsc)$ and a $0$-open subset $X^\prime$ of $\mathcal{X}$. 

Let us consider the mapping $f\colon \mathcal{P}(X)\to \mathcal{P}(X^\prime)$ such that $f(V)= X^\prime \cap V$ for any $V\in \mathcal{P}(X)$. For each $i\in \mathbb{N}$, we have
\[f(\Box_i V)= X^\prime \cap \Box_i V = X^\prime \cap  \Box_i X^\prime \cap \Box_i V= X^\prime \cap  \Box_i (X^\prime \cap V)= \Box^\prime_i f(V)\]
since $X^\prime \subset \Box_0 X^\prime \subset \Box_i X^\prime$. We see that $f$ is a surjective homomorphism from the powerset $\mathsf{GLP}$-algebra of $\mathcal{X}$ to the Boolean algebra $\mathcal{P}(X^\prime)$ expanded with the operations $\Box^\prime_0, \Box^\prime_1, \dotsc$. Hence, the latter algebra satisfies every identity satisfied by the former one. Therefore, the expansion of $\mathcal{P}(X^\prime)$ is a $\mathsf{GLP}$-algebra, and $(X^\prime, \Box^\prime_0, \Box^\prime_1, \dotsc)$ is a $\mathsf{GLP}$-frame.

Moreover, for any $V\subset X^\prime$, we have
\[V\subset \Box^\prime_i V \Longleftrightarrow V\subset X^\prime\cap \Box_i V \Longleftrightarrow V\subset \Box_i V. \]
Consequently, $V$ is $i$-open in $\mathcal{X}^\prime$ if and only if it is $i$-open in $\mathcal{X}$.

\end{proof}

\begin{lem}\label{submodel}
Suppose $(\mathcal{X}^\prime, v^\prime)$ and $(\mathcal{X}, v)$ are $\mathsf{GLP}$-models, where $\mathcal{X}^\prime$ is an open subframe of $\mathcal{X}$ and $v^\prime(p)=X^\prime \cap v(p)$ for any $p\in \mathit{PV}$. Then $v^\prime(\varphi) = X^\prime \cap v(\varphi)$ for any formula $\varphi$.
\end{lem}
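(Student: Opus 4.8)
The plan is to argue by a routine induction on the structure of $\varphi$, using the map $f\colon \mathcal{P}(X)\to \mathcal{P}(X^\prime)$ defined by $f(V)=X^\prime\cap V$, which already appeared in the proof of Lemma \ref{subframe}. Recall that there it was shown that $f$ is a surjective homomorphism from the powerset $\mathsf{GLP}$-algebra of $\mathcal{X}$ onto the powerset $\mathsf{GLP}$-algebra of $\mathcal{X}^\prime$; in particular $f$ commutes with the Boolean operations and satisfies $f(\Box_i V)=\Box^\prime_i f(V)$ for every $i\in\mathbb{N}$ and every $V\subset X$.

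Now the quickest route is to observe that the composition $f\circ v$ is itself a valuation in the powerset $\mathsf{GLP}$-algebra of $\mathcal{X}^\prime$, since $v$ is a valuation and $f$ is a $\mathsf{GLP}$-algebra homomorphism: indeed $(f\circ v)(\bot)=f(0)=0$, $(f\circ v)(\psi\to\chi)=f(v(\psi)\to v(\chi))=(f\circ v)(\psi)\to(f\circ v)(\chi)$, and $(f\circ v)(\Box_i\psi)=f(\Box_i v(\psi))=\Box^\prime_i (f\circ v)(\psi)$. By hypothesis $v^\prime(p)=X^\prime\cap v(p)=(f\circ v)(p)$ for every $p\in\mathit{PV}$, so $v^\prime$ and $f\circ v$ are two valuations in the same algebra agreeing on all propositional variables. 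Since a valuation is uniquely determined by its values on $\mathit{PV}$, we conclude $v^\prime=f\circ v$, that is, $v^\prime(\varphi)=f(v(\varphi))=X^\prime\cap v(\varphi)$ for every formula $\varphi$.

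If one prefers to avoid invoking the homomorphism abstractly, the same conclusion follows by an explicit induction: the base case $\varphi=p$ is the assumption, $\varphi=\bot$ is immediate, the case $\varphi=\psi\to\chi$ combines the induction hypothesis with the identity $X^\prime\cap(A\to B)=(X^\prime\cap A)\to(X^\prime\cap B)$ computed inside $\mathcal{P}(X^\prime)$, and the modal case $\varphi=\Box_i\psi$ reduces, via the induction hypothesis $v^\prime(\psi)=X^\prime\cap v(\psi)$, to the equality $X^\prime\cap\Box_i(X^\prime\cap v(\psi))=X^\prime\cap\Box_i v(\psi)$, which is exactly the computation carried out in the proof of Lemma \ref{subframe} (it holds because $X^\prime\subset\Box_0 X^\prime\subset\Box_i X^\prime$ and $\Box_i$ distributes over intersections). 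There is no real obstacle here; the only step deserving attention is the modal one, where the $0$-openness of $X^\prime$ is precisely what makes the extra factor $\Box_i X^\prime$ disappear.
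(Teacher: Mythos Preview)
Your proposal is correct and essentially matches the paper: the paper proves the lemma by induction on the structure of $\varphi$, spelling out only the modal case $\varphi=\Box_i\psi$ via the chain $\Box^\prime_i v^\prime(\psi)=X^\prime\cap\Box_i(X^\prime\cap v(\psi))=X^\prime\cap\Box_i X^\prime\cap\Box_i v(\psi)=X^\prime\cap\Box_i v(\psi)$, using $X^\prime\subset\Box_0 X^\prime\subset\Box_i X^\prime$. Your explicit-induction alternative is exactly this argument, and your homomorphism version is a clean repackaging of the same computation via Lemma~\ref{subframe}.
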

\begin{proof}
This lemma is proved by induction on the structure of $\varphi$. Let us consider only the main case when $\varphi$ has the form $\Box_i \psi$. By the induction hypothesis, we have $v^\prime(\psi) = X^\prime \cap v(\psi)$. For any $i \in \mathbb{N}$, we obtain 
\begin{multline*} 
v^\prime(\varphi)= v^\prime(\Box_i \psi) = \Box^\prime_i v^\prime(\psi) = X^\prime \cap \Box_i v^\prime(\psi) = X^\prime \cap \Box_i (X^\prime \cap v(\psi)) = \\
X^\prime \cap \Box_i X^\prime \cap \Box_i v(\psi)= X^\prime \cap \Box_i v(  \psi)=  X^\prime \cap v( \Box_i \psi)=X^\prime \cap v( \varphi)
\end{multline*}
since $X^\prime \subset \Box_0 X^\prime \subset \Box_i X^\prime$.
\end{proof}

Let us recall the following neighbourhood completeness result obtained by Beklemishev and Gabelaia in \cite{Beklemishev2013}. We will establish stronger versions of this result in the final two sections.
 
\begin{thm}
For any formula $\varphi$, if $\mathsf{GLP} \nvdash \varphi $, then there is a $\mathsf{GLP}$-model $\mathcal{M}$ and a world $x$ of $\mathcal{M}$ such that  $\mathcal{M}, x \nvDash \varphi$.
\end{thm}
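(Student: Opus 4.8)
The plan is to read this off from algebraic completeness together with the dictionary between $\mathsf{GLP}$-algebras and neighbourhood $\mathsf{GLP}$-frames. Suppose $\mathsf{GLP}\nvdash\varphi$. By the usual Lindenbaum--Tarski construction for the Frege--Hilbert calculus of Section~\ref{s1} (the ordinary-derivations analogue of the construction in the proof of Theorem~\ref{algebraic completeness}, now over the class of \emph{all} $\mathsf{GLP}$-algebras), the formulas modulo $\mathsf{GLP}$-provable equivalence form a countable $\mathsf{GLP}$-algebra $\mathcal{A}$ with the canonical valuation $v_{0}(\psi)=[\psi]$, and $v_{0}(\varphi)\neq 1$. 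So it suffices to realise $\mathcal{A}$ inside the powerset $\mathsf{GLP}$-algebra of a neighbourhood $\mathsf{GLP}$-frame: given an embedding $e\colon\mathcal{A}\hookrightarrow\mathcal{P}(X)$ of $\mathsf{GLP}$-algebras with $\mathcal{X}=(X,\Box_{0},\Box_{1},\dotsc)$ a $\mathsf{GLP}$-frame, set $v(p):=e(v_{0}(p))$ for $p\in\mathit{PV}$; an easy induction on the structure of formulas --- whose modal step, exactly as in Lemma~\ref{submodel}, only uses that $e$ commutes with every $\Box_{i}$ --- gives $v(\psi)=e(v_{0}(\psi))$ for all $\psi$, whence $v(\varphi)\neq X$ and $\varphi$ is refuted at any world of $(\mathcal{X},v)$ lying outside $v(\varphi)$.

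Everything therefore reduces to the representation statement: \emph{every countable $\mathsf{GLP}$-algebra embeds into the powerset $\mathsf{GLP}$-algebra of some neighbourhood $\mathsf{GLP}$-frame}, equivalently (via Propositions~\ref{from scattered spaces to Esakia frames}--\ref{GLP-spaces=GLP-frames}) into the algebra $(\mathcal{P}(X),\mathit{cd}_{\tau_{0}},\mathit{cd}_{\tau_{1}},\dotsc)$ of a $\mathsf{GLP}$-space $(X,\tau_{0},\tau_{1},\dotsc)$. Here I would follow Beklemishev and Gabelaia \cite{Beklemishev2013}: build a suitable multitopological space from the dual of $\mathcal{A}$ (or, alternatively, a concrete ordinal $\mathsf{GLP}$-space) so that the $i$-th co-derivative agrees with $\Box_{i}$ on a faithful field of sets, each $\tau_{i}$ is scattered, the topologies are nested, and $\mathit{d}_{\tau_{i}}(V)\in\tau_{i+1}$. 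For a single modality this is just the classical topological completeness of $\mathsf{GL}$ with respect to scattered spaces (and, concretely, ordinal spaces with the order topology; a disjoint union absorbs a whole family of countermodels into one space), which fixes the base step of the construction.

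The hard part is the simultaneous control of the entire tower of topologies: keeping $\tau_{i}$ scattered for \emph{every} $i$ while enforcing the interaction conditions $\tau_{i}\subseteq\tau_{i+1}$ and $\mathit{d}_{\tau_{i}}(V)\in\tau_{i+1}$ coming from axioms (iv)--(v) is delicate, and it is precisely the genuinely new content of \cite{Beklemishev2013}, carried out there by a transfinite construction of the required $\mathsf{GLP}$-space. I should add that in the present paper the statement will also reappear as a corollary of the stronger results of the final two sections: once strong neighbourhood completeness of the infinitary extension of $\mathsf{GLP}$ has been established and that extension has been shown conservative over $\mathsf{GLP}$, the theorem follows immediately.
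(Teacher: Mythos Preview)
Your proposal is essentially correct, and in fact more detailed than what the paper does: in the paper this theorem is not proved at all --- it is simply recalled from \cite{Beklemishev2013} as background, with the remark that stronger versions will follow in the final sections. Your sketch reduces the statement to the representation of a countable $\mathsf{GLP}$-algebra inside the powerset algebra of a $\mathsf{GLP}$-space and then defers the hard step to \cite{Beklemishev2013}, which is exactly the content of the cited result; your closing remark that the theorem also drops out of the paper's own later work (Theorem~\ref{neighbourhood completeness} together with Theorem~\ref{fin <-> inf for fin sigma}) is likewise accurate. One small caution: be careful not to phrase the alternative derivation as if the paper's later arguments \emph{use} this theorem --- they do not, and that independence is precisely what makes the corollary noncircular.
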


Now we define semantic consequence relations over neighbourhood $\mathsf{GLP}$-frames corresponding
to derivability relations $\Vdash_{l}$, $\Vdash_{g}$, $\Vdash$ and show some properties of these relations. 

\begin{definition}
Given a set of formulas $\Gamma$ and a formula $\varphi$, we set $\Gamma \vDash_l \varphi$ if, for any $\mathsf{GLP}$-model $\mathcal{M}$ and any world $x$ of $\mathcal{M}$,
\[(\forall \psi \in \Gamma \;\; \mathcal{M}, x \vDash \psi)   \Longrightarrow \mathcal{M}, x \vDash \varphi.\]
We also set $\Gamma \vDash_g \varphi$ if, for any $\mathsf{GLP}$-model $\mathcal{M}$,
\[(\forall \psi \in \Gamma \;\; \mathcal{M} \vDash \psi)   \Longrightarrow \mathcal{M} \vDash \varphi.\]
In addition, we set $\Sigma ; \Gamma \vDash \varphi$ if, for any $\mathsf{GLP}$-model $\mathcal{M}$ and any world $x$ of $\mathcal{M}$,
\[( (\forall \psi \in \Gamma \;\; \mathcal{M}, x \vDash \psi)  \wedge (\forall y \neq x \;\;\forall \xi \in \Sigma \;\; \mathcal{M}, y \vDash \xi )) \Longrightarrow \mathcal{M}, x \vDash \varphi.\]
\end{definition}

Trivially, the relation $\vDash$ is a generalization of $\vDash_l$ and $\vDash_g $ since $\emptyset ; \Gamma \vDash \varphi \Leftrightarrow \Gamma \vDash_l \varphi$ and $\Gamma ;\Gamma  \vDash \varphi \Leftrightarrow \Gamma  \vDash_g \varphi$.

\begin{prop}\label{neighbourhood soundness}
For any sets of formulas $\Sigma$ and $\Gamma$, and for any formula $\varphi$, we have
\[\Sigma; \Gamma \VDash \varphi \Longrightarrow \Sigma; \Gamma \vDash \varphi.\]
\end{prop}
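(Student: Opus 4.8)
The plan is to reduce the neighbourhood statement to the already-established algebraic statement (Theorem \ref{algebraic completeness}, or more precisely Proposition \ref{algebraic soundness 2}) by observing that every neighbourhood $\mathsf{GLP}$-frame gives rise to a $\Box$-founded $\mathsf{GLP}$-algebra. First I would recall that for a neighbourhood $\mathsf{GLP}$-frame $\mathcal{X} = (X, \Box_0, \Box_1, \dotsc)$, the powerset algebra $\mathcal{P}(X)$ with the operations $\Box_i$ is by definition a $\mathsf{GLP}$-algebra. The key extra ingredient is that this algebra is $\Box$-founded: by Proposition \ref{GLP-spaces=GLP-frames} the frame corresponds to a $\mathsf{GLP}$-space whose topology $\tau_0$ is scattered, and $\Box_0 = \mathit{cd}_{\tau_0}$. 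Scatteredness of $\tau_0$ translates, via Proposition \ref{box-foundness is equivalent to well-foundness}, into well-foundedness of the relation $\prec$ on $\mathcal{P}(X)\setminus\{X\}$, hence $\Box$-foundedness of $\mathcal{A}_0$ and therefore of the $\mathsf{GLP}$-algebra. (Concretely: if $\Box_0 V_{i+1} \subseteq V_i$ for all $i$ with some $V_0 \neq X$, pick $x \notin V_0$; then $x \notin V_i$ for infinitely many $i$, and scatteredness of the set $\{x_i\}$ of such witnesses yields a contradiction with $\mathit{cd}_{\tau_0}$-monotonicity — this is exactly the content of the cited propositions, so I would just invoke them.)

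Next I would set up the correspondence between neighbourhood satisfaction and algebraic valuations so that the global-local hypotheses match up. Given a $\mathsf{GLP}$-model $\mathcal{M} = (\mathcal{X}, v)$ and a world $x$ with $(\forall \psi \in \Gamma\; \mathcal{M}, x \vDash \psi)$ and $(\forall y \neq x\;\forall \xi \in \Sigma\; \mathcal{M}, y \vDash \xi)$, I want to produce a $\Box$-founded $\mathsf{GLP}$-algebra and a valuation witnessing the hypothesis of $\Sigma; \Gamma \VDash \varphi$, namely $\Box_0 v'(\xi) = 1$ for all $\xi \in \Sigma$ and then conclude $v'(\varphi) \in \langle\{v'(\psi) \mid \psi \in \Gamma\}\rangle$. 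The natural move is to pass to the open subframe: the singleton-complement or rather the smallest $0$-open set $X'$ containing $x$ — actually I would instead use that $\{y : y \neq x\} \cup \{x\}$-style manipulation is awkward, so cleaner is to work directly in $\mathcal{P}(X)$. For $\xi \in \Sigma$, the hypothesis says $v(\xi) \supseteq X \setminus \{x\}$, i.e. $v(\xi)$ is either $X$ or $X \setminus \{x\}$. Since $\tau_0$ is scattered it is $T_d$ (Proposition \ref{T_d-prop}), and in a $T_d$ space a point $x$ that is in the closure of $\{x\}$... — more to the point, I need $\Box_0 v(\xi) = X$, i.e. every point has a punctured $0$-neighbourhood inside $v(\xi)$. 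For points $y \neq x$ this is clear since $v(\xi) \supseteq X \setminus \{x\} \ni$ all punctured neighbourhoods avoiding $x$... no: a punctured neighbourhood of $y$ might contain $x$. Here is where $T_d$ enters: every point of a scattered space is closed in one of its neighbourhoods, so $x$ has a $0$-neighbourhood $U$ with $U \setminus \{x\}$ a punctured neighbourhood of... hmm, rather each $y$ has a neighbourhood not containing $x$ when $x$ is closed somewhere — I would use that $\{x\}$ being locally closed gives, for every $y$, a punctured neighbourhood avoiding $x$, hence contained in $X \setminus \{x\} \subseteq v(\xi)$; and for $y = x$ we need $x \in \Box_0 v(\xi)$, which when $v(\xi) = X$ is trivial and when $v(\xi) = X\setminus\{x\}$ holds iff $\{x\}$ is $0$-open, i.e. $x$ isolated. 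That last case is the subtle one.

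The main obstacle, then, is handling the case where $x$ is not isolated and some $\xi \in \Sigma$ has $v(\xi) = X \setminus \{x\}$ exactly: then $\Box_0 v(\xi) \neq X$ in the full algebra, so I cannot directly apply $\VDash$. The resolution I expect the paper intends is to restrict to the open subframe $\mathcal{X}'$ on the smallest $0$-open $X'$ with $x \in X'$ — but the cleanest fix is: replace $\mathcal{M}$ by the open submodel on $X' = cd_{\tau_0}(X\setminus\{x\}) \cup \{x\}$ or simply note that it suffices to consider the open submodel generated by a neighbourhood of $x$ in which $\{x\}$ is closed (which exists by $T_d$); call it $X'$, so $x$ is closed in $\mathcal{X}'$, meaning $X' \setminus \{x\}$ is $0$-open in $\mathcal{X}'$. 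By Lemma \ref{subframe} and Lemma \ref{submodel}, $\mathcal{X}'$ is a $\mathsf{GLP}$-frame (still $\Box$-founded, being a subalgebra image, as noted for $\mathcal{A}_i$) and $v'(\psi) = X' \cap v(\psi)$. Now for $\xi \in \Sigma$, $v'(\xi) \supseteq X' \setminus \{x\}$ which is $0$-open in $\mathcal{X}'$, and one checks $\Box'_0 v'(\xi) = X'$: for $y \neq x$ use $0$-openness of $X'\setminus\{x\}$; for $y = x$ use that $x$ is closed in $\mathcal{X}'$, so $X'\setminus\{x\}$ is a punctured neighbourhood of $x$ contained in $v'(\xi)$, giving $x \in \Box'_0 v'(\xi)$. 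Then $\Sigma; \Gamma \VDash \varphi$ applied to the $\Box$-founded $\mathsf{GLP}$-algebra $\mathcal{P}(X')$ and valuation $v'$ yields $v'(\varphi) \in \langle\{v'(\psi)\mid\psi\in\Gamma\}\rangle$, so there is a finite $\Gamma_0 \subseteq \Gamma$ with $\bigcap_{\psi\in\Gamma_0} v'(\psi) \subseteq v'(\varphi)$; since $x \in v'(\psi) = X'\cap v(\psi)$ for all $\psi \in \Gamma$, we get $x \in v'(\varphi) \subseteq v(\varphi)$, i.e. $\mathcal{M}, x \vDash \varphi$, as required. I would present the scatteredness$\Rightarrow$$\Box$-foundedness step and the existence of the $T_d$-neighbourhood as the two lemmas being leaned on, and flag the $T_d$ step as the one needing care.
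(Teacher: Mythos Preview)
Your proposal is correct and follows essentially the same route as the paper: use the $T_d$ property (Proposition~\ref{T_d-prop}) to find a $0$-neighbourhood $X'$ of $x$ with $X'\setminus\{x\}$ $0$-open, pass to the open subframe $\mathcal{X}'$ via Lemmas~\ref{subframe} and~\ref{submodel}, verify $\Box'_0 v'(\xi)=X'$ for all $\xi\in\Sigma$, apply $\VDash$ in the powerset algebra of $\mathcal{X}'$, and extract $x\in v'(\varphi)$ from the resulting filter membership. The only cosmetic difference is that the paper does not spell out the $\Box$-foundedness of the powerset algebra, since this follows more directly from $\sigma$-completeness (Proposition~\ref{from sigma-completness to box-foundness}) than from the scatteredness argument you sketch.
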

\begin{proof}
{
Assume $\Sigma; \Gamma \VDash \varphi$. In addition, assume we have a $\mathsf{GLP}$-model $\mathcal{M}= ((X, \Box_0, \Box_1,\dotsc ), v)$ and a world $x$ of $\mathcal{M}$ such that $\mathcal{M}, x \vDash \psi$ for each $\psi \in \Gamma$ and
\[ \forall y \neq x \;\;\forall \xi \in \Sigma \;\; \mathcal{M}, y \vDash \xi .\] We shall prove that $\mathcal{M}, x \vDash \varphi$.

Let us denote the $\mathsf{GLP}$-frame $ (X, \Box_0, \Box_1,\dotsc )$ by $\mathcal{X}$. From Proposition \ref{T_d-prop}, there exists a $0$-neighbourhood $X^\prime$ of $x$ such that $X^\prime\setminus \{x\}$ is $0$-open. Consider the open subframe $(X^\prime, \Box^\prime_0, \Box^\prime_1, \dotsc)$ of $\mathcal{X}$ determined by $X^\prime $ and denote it by $\mathcal{X}^\prime$. Note that $\mathcal{X}^\prime$ is a $\mathsf{GLP}$-frame by Lemma \ref{subframe}. We define the valuation $v^\prime$ over the frame $\mathcal{X}^\prime$ such that $v^\prime(p)=X^\prime \cap v(p)$ for all $p\in \mathit{PV}$. From Lemma \ref{submodel}, for any formula $\eta$, $v^\prime (\eta)= X^\prime\cap v(\eta)$. Therefore, $\mathcal{M}^\prime , x \vDash \psi$ for all $\psi \in \Gamma$, where $\mathcal{M}^\prime = (\mathcal{X}^\prime, v^\prime)$. In addition, we have
\[ \forall y \in X^\prime\setminus \{x\}\;\;\forall \xi \in \Sigma \;\; \mathcal{M}^\prime, y \vDash \xi .\]
Note that $X^\prime\setminus \{x\}$ is $0$-open in $\mathcal{X}^\prime=(X^\prime, \Box^\prime_0, \Box^\prime_1, \dotsc)$ by Lemma \ref{subframe}. Hence, every point of $\mathcal{X}^\prime$ has a punctured $0$-neighbourhood entirely contained in $v^\prime (\xi)$ for each $\xi\in \Sigma$. We have $ \Box^\prime_0 v^\prime (\xi)= X^\prime$ for any $\xi\in \Sigma$. From the assumption $\Sigma; \Gamma \VDash \varphi$, it follows that
\[v^\prime(\varphi) \in \langle \{ v^\prime(\psi) \mid \psi \in \Gamma \} \rangle . \]
Consequently, there exists a finite subset $\Gamma^\prime$ of $\Gamma$ such that \[ \bigcap \{v^\prime(\psi) \mid \psi \in \Gamma^\prime\}\subset v^\prime(\varphi),\]
where 
\[\bigcap \emptyset\coloneq X^\prime.\] 
Now we see 
\[x \in \bigcap\{ v^\prime(\psi) \mid \psi \in \Gamma \} \subset \bigcap\{ v^\prime(\psi) \mid \psi \in \Gamma^\prime \}\subset v^\prime(\varphi).\]
Therefore, $\mathcal{M}^\prime, x \vDash \varphi$. Applying Lemma \ref{submodel}, we conclude that $\mathcal{M}, x \vDash \varphi$.
}
\end{proof}

The semantic consequence relation $\vDash$ can be equivalently defined in a more general form as follows. 

\begin{definition}
Given two sets of formulas $\Sigma$ and $\Gamma$, and a formula $\varphi$, we put $\Sigma; \Gamma \vDash^\ast \varphi$ if
\[( (\forall \psi \in \Gamma \;\; \mathcal{M}, x \vDash \psi)  \wedge (\forall y \in U \setminus \{x\} \;\;\forall \xi \in \Sigma \;\; \mathcal{M}, y \vDash \xi )) \Longrightarrow \mathcal{M}, x \vDash \varphi\]
for any $\mathsf{GLP}$-model $\mathcal{M}$, any world $x$ of $\mathcal{M}$ and any $0$-neighbourhood $U$ of $x$ in $\mathcal{M}$. 
\end{definition}

\begin{prop}\label{two glocal neighbourhood relations}
For any sets of formulas $\Sigma$ and $\Gamma$ and any formula $\varphi$, we have
\[\Sigma ;\Gamma \vDash \varphi \Longleftrightarrow  \Sigma ;\Gamma \vDash^\ast \varphi.\]
\end{prop}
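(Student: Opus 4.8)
The plan is to prove the two implications separately, with the right-to-left direction being immediate and the left-to-right one reducing directly to the open-subframe lemmas already established (Lemmas~\ref{subframe} and~\ref{submodel}).

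For $\Sigma;\Gamma \vDash^\ast \varphi \Rightarrow \Sigma;\Gamma \vDash \varphi$, I would simply observe that in any $\mathsf{GLP}$-model the whole universe $X$ is a $0$-neighbourhood of every world: indeed $\Box_0 X = X$ holds in the powerset Magari algebra (since $\Box 1 = 1$), so $X$ is $0$-open and of course contains every point. Instantiating the defining condition of $\vDash^\ast$ with $U = X$ then yields precisely the defining condition of $\vDash$.

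For the converse, assume $\Sigma;\Gamma \vDash \varphi$, and take a $\mathsf{GLP}$-model $\mathcal{M} = (\mathcal{X}, v)$ with $\mathcal{X} = (X, \Box_0, \Box_1, \dotsc)$, a world $x$ of $\mathcal{M}$, and a $0$-neighbourhood $U$ of $x$ such that $\mathcal{M}, x \vDash \psi$ for all $\psi \in \Gamma$ and $\mathcal{M}, y \vDash \xi$ for all $y \in U \setminus \{x\}$ and all $\xi \in \Sigma$; the goal is $\mathcal{M}, x \vDash \varphi$. Since $U$ is $0$-open, form the open subframe $\mathcal{X}^\prime$ of $\mathcal{X}$ determined by $U$; by Lemma~\ref{subframe} it is a $\mathsf{GLP}$-frame. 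Equip it with the valuation $v^\prime$ given by $v^\prime(p) = U \cap v(p)$ on propositional variables, and set $\mathcal{M}^\prime = (\mathcal{X}^\prime, v^\prime)$. By Lemma~\ref{submodel}, $v^\prime(\eta) = U \cap v(\eta)$ for every formula $\eta$. Hence $x \in v^\prime(\psi)$ for every $\psi \in \Gamma$ (as $x \in U \cap v(\psi)$), and for every world $y \neq x$ of $\mathcal{M}^\prime$ — that is, every $y \in U \setminus \{x\}$ — and every $\xi \in \Sigma$ we have $y \in v^\prime(\xi)$. Thus $\mathcal{M}^\prime$ and $x$ verify the premises in the definition of $\Sigma;\Gamma \vDash \varphi$, so $\mathcal{M}^\prime, x \vDash \varphi$, i.e. $x \in v^\prime(\varphi) = U \cap v(\varphi) \subset v(\varphi)$, which gives $\mathcal{M}, x \vDash \varphi$.

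The argument is entirely routine; no genuine obstacle is expected. The only point requiring a little care is the bookkeeping of the quantifier over worlds: the clause ``for all $y \neq x$'' read inside the restricted model $\mathcal{M}^\prime$, whose universe is $U$, coincides with ``for all $y \in U \setminus \{x\}$'' in the original model, which is exactly the form in which the hypothesis of $\vDash^\ast$ is stated. (Note also that the special cases $\Gamma = \emptyset$ or $\Sigma = \emptyset$ are covered uniformly, with the convention $\bigcap\emptyset \coloneq U$ as used in the proof of Proposition~\ref{neighbourhood soundness}.)
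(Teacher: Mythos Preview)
Your proposal is correct and follows essentially the same route as the paper: the right-to-left direction is dismissed as trivial (you spell out the instantiation $U=X$, the paper just says ``trivially holds''), and the left-to-right direction passes to the open subframe determined by the given $0$-neighbourhood and invokes Lemmas~\ref{subframe} and~\ref{submodel} exactly as the paper does.
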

\begin{proof}
{
The right-to-left implication trivially holds. Let us prove the converse. Assume $\Sigma ;\Gamma \vDash \varphi$. In addition, assume there is a $\mathsf{GLP}$-model $\mathcal{M}= ((X, \Box_0, \Box_1,\dotsc ), v)$, a world $x$ of $\mathcal{M}$ and a $0$-neighbourhood $X^\prime$ of $x$ such that $\mathcal{M}, x \vDash \psi$ for each $\psi \in \Gamma$ and
\[\forall y \in X^\prime \setminus \{x\} \;\;\forall \xi \in \Sigma \;\; \mathcal{M}, y \vDash \xi .\]
We shall prove $\mathcal{M}, x \vDash \varphi$.  

By $\mathcal{X}$, we denote the $\mathsf{GLP}$-frame $ (X, \Box_0, \Box_1,\dotsc )$. Consider the open subframe $(X^\prime, \Box^\prime_0, \Box^\prime_1, \dotsc)$ of the $\mathsf{GLP}$-frame $\mathcal{X}$ determined by $X^\prime $ and denote it by $\mathcal{X}^\prime$. Note that $\mathcal{X}^\prime$ is a $\mathsf{GLP}$-frame by Lemma \ref{subframe}. Furthermore, define the valuation $v^\prime$ over the frame $\mathcal{X}^\prime$ such that $v^\prime(p)=X^\prime \cap v(p)$ for all $p\in \mathit{PV}$. From Lemma \ref{submodel}, for any formula $\eta$, we have $v^\prime (\eta)= X^\prime\cap v(\eta)$. Therefore, $\mathcal{M}^\prime , x \vDash \psi$ for each $\psi \in \Gamma$, where $\mathcal{M}^\prime = (\mathcal{X}^\prime, v^\prime)$. In addition, 
\[ \forall y \in X^\prime\setminus \{x\}\;\;\forall \xi \in \Sigma \;\; \mathcal{M}^\prime, y \vDash \xi .\]
From the assumption $\Sigma; \Gamma \vDash \varphi$, it follows that
$\mathcal{M}^\prime , x \vDash \varphi$. Applying Lemma \ref{submodel}, we conclude $\mathcal{M}, x \vDash \varphi$.

}
\end{proof}

\section{Representation of $\Box$-founded Magari algebras}
\label{s4}
In this section, we prove that any $\Box$-founded Magari algebra can be embedded into the powerset Magari algebra of an Esakia frame. We also obtain some related results, which will be applied in the next section.

From Proposition \ref{box-foundness is equivalent to well-foundness}, we know that a Magari algebra $\mathcal{A}=( A, \wedge, \vee, \to, 0, 1, \Box )$ is $\Box$-founded if and only if the binary relation $\prec_\mathcal{A}$ is well-founded on $A\setminus \{1\}$, where 
\[a\prec_\mathcal{A} b \Longleftrightarrow \Box a \leqslant b.\]
Let us recall some basic properties of well-founded relations.
 
A \emph{well-founded set} is a pair $\mathcal{S} =( S, \prec )$, where $\prec$ is a well-founded relation on $S$. For any element $a $ of $\mathcal{S}$, its ordinal height in $\mathcal{S}$ is denoted by $\mathit{ht}_\mathcal{S}(a)$. Recall that $\mathit{ht}_\mathcal{S}$ is defined by transfinite recursion on $\prec$ as follows:
\[\mathit{ht}_\mathcal{S} (a)= \sup \{ \mathit{ht}_\mathcal{S} (b)+1 \mid b \prec a \}.\] 

A \emph{homomorphism} from $\mathcal{S}_1=( S_1, \prec_1 )$ to $\mathcal{S}_2=( S_2, \prec_2 )$ is a function $f \colon S_1 \to S_2$ such that $f(b)\prec_2 f(c)$ whenever $b \prec_1 c$. 
\begin{prop}\label{prop1}
Suppose $f\colon \mathcal{S}_1 \to \mathcal{S}_2$ is a homomorphism of well-founded sets and $a$ is an element of $\mathcal{S}_1$. Then $\mathit{ht}_{\mathcal{S}_1} (a) \leqslant\mathit{ht}_{\mathcal{S}_2} (f(a))$. 
\end{prop}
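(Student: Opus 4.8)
The plan is to prove this by transfinite induction on $\mathit{ht}_{\mathcal{S}_1}(a)$, which is the natural strategy since the ordinal height is itself defined by transfinite recursion along $\prec_1$. The claim to establish, for all $a \in S_1$, is $\mathit{ht}_{\mathcal{S}_1}(a) \leqslant \mathit{ht}_{\mathcal{S}_2}(f(a))$.

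For the induction step, fix $a \in S_1$ and assume the inequality holds for every $b \prec_1 a$. By definition, $\mathit{ht}_{\mathcal{S}_1}(a) = \sup\{\mathit{ht}_{\mathcal{S}_1}(b) + 1 \mid b \prec_1 a\}$. For each such $b$, the induction hypothesis gives $\mathit{ht}_{\mathcal{S}_1}(b) \leqslant \mathit{ht}_{\mathcal{S}_2}(f(b))$, hence $\mathit{ht}_{\mathcal{S}_1}(b) + 1 \leqslant \mathit{ht}_{\mathcal{S}_2}(f(b)) + 1$. Since $f$ is a homomorphism, $b \prec_1 a$ implies $f(b) \prec_2 f(a)$, so $\mathit{ht}_{\mathcal{S}_2}(f(b)) + 1$ is one of the terms in the supremum defining $\mathit{ht}_{\mathcal{S}_2}(f(a))$; therefore $\mathit{ht}_{\mathcal{S}_2}(f(b)) + 1 \leqslant \mathit{ht}_{\mathcal{S}_2}(f(a))$. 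Chaining these, $\mathit{ht}_{\mathcal{S}_1}(b) + 1 \leqslant \mathit{ht}_{\mathcal{S}_2}(f(a))$ for every $b \prec_1 a$, and taking the supremum over all such $b$ yields $\mathit{ht}_{\mathcal{S}_1}(a) \leqslant \mathit{ht}_{\mathcal{S}_2}(f(a))$.

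This completes the induction, and hence the proof. There is essentially no obstacle here: the argument is a routine unwinding of the recursive definition of ordinal height combined with the defining property of a homomorphism, and the only mild subtlety is making sure the supremum comparison is handled correctly (a supremum of a set of ordinals is $\leqslant$ an ordinal $\gamma$ iff every element of the set is $\leqslant \gamma$), together with the base case where $a$ is $\prec_1$-minimal, in which $\mathit{ht}_{\mathcal{S}_1}(a) = \sup \emptyset = 0 \leqslant \mathit{ht}_{\mathcal{S}_2}(f(a))$ trivially. In the write-up I would simply fold the base case into the induction step, since $\sup \emptyset = 0$ makes the argument uniform.
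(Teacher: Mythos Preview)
Your proof is correct and is the standard transfinite-induction argument for this fact. The paper itself states this proposition without proof, treating it as a basic property of ordinal height, so there is nothing to compare.
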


For well-founded sets $\mathcal{S}_1 =( S_1, \prec_1 )$ and $\mathcal{S}_2 =( S_2, \prec_2 )$, their product $\mathcal{S}_1\times \mathcal{S}_2$ is defined as the set $S_1 \times S_2$ together with the following relation
\[( b_1 , b_2 ) \prec ( c_1 , c_2 )  \Longleftrightarrow b_1 \prec_1 c_1 \text{ and } b_2 \prec_2 c_2. \] 
Clearly, $\prec$ is a well-founded relation on $S_1 \times S_2$.


\begin{prop}\label{prop2}
Suppose $a$ and $b$ are elements of well-founded sets $\mathcal{S}_1$ and $\mathcal{S}_2$ respectively. Then $\mathit{ht}_{\mathcal{S}_1 \times \mathcal{S}_2} (( a, b)) = \min \{ \mathit{ht}_{\mathcal{S}_1} (a), \mathit{ht}_{\mathcal{S}_2} (b) \}$. 
\end{prop}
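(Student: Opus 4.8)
The plan is to prove the equality $\mathit{ht}_{\mathcal{S}_1 \times \mathcal{S}_2}((a,b)) = \min\{\mathit{ht}_{\mathcal{S}_1}(a), \mathit{ht}_{\mathcal{S}_2}(b)\}$ by transfinite induction, say on the ordinal $\mathit{ht}_{\mathcal{S}_1 \times \mathcal{S}_2}((a,b))$ (or, equivalently, a simultaneous induction on both heights). For the inequality $\leqslant$, I would first observe that the two projection maps $\pi_1 \colon \mathcal{S}_1 \times \mathcal{S}_2 \to \mathcal{S}_1$ and $\pi_2 \colon \mathcal{S}_1 \times \mathcal{S}_2 \to \mathcal{S}_2$ are homomorphisms of well-founded sets, directly from the definition of the product relation $(b_1,b_2) \prec (c_1,c_2) \Leftrightarrow b_1 \prec_1 c_1 \text{ and } b_2 \prec_2 c_2$. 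Applying Proposition \ref{prop1} to each projection gives $\mathit{ht}_{\mathcal{S}_1 \times \mathcal{S}_2}((a,b)) \leqslant \mathit{ht}_{\mathcal{S}_1}(a)$ and $\mathit{ht}_{\mathcal{S}_1 \times \mathcal{S}_2}((a,b)) \leqslant \mathit{ht}_{\mathcal{S}_2}(b)$, hence $\leqslant \min\{\mathit{ht}_{\mathcal{S}_1}(a), \mathit{ht}_{\mathcal{S}_2}(b)\}$. This direction is immediate and needs no induction.

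For the reverse inequality $\geqslant$, I would unfold the recursive definition $\mathit{ht}_{\mathcal{S}_1 \times \mathcal{S}_2}((a,b)) = \sup\{\mathit{ht}_{\mathcal{S}_1 \times \mathcal{S}_2}((a',b')) + 1 \mid (a',b') \prec (a,b)\}$ and use the induction hypothesis, which tells me each summand equals $\min\{\mathit{ht}_{\mathcal{S}_1}(a'), \mathit{ht}_{\mathcal{S}_2}(b')\} + 1$ for $a' \prec_1 a$ and $b' \prec_2 b$. So I must show $\sup\{\min\{\mathit{ht}_{\mathcal{S}_1}(a'), \mathit{ht}_{\mathcal{S}_2}(b')\} + 1 \mid a' \prec_1 a,\ b' \prec_2 b\} \geqslant \min\{\mathit{ht}_{\mathcal{S}_1}(a), \mathit{ht}_{\mathcal{S}_2}(b)\}$. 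Write $\alpha = \mathit{ht}_{\mathcal{S}_1}(a)$ and $\beta = \mathit{ht}_{\mathcal{S}_2}(b)$, and let $\gamma = \min\{\alpha,\beta\}$. If $\gamma = 0$ the claim is trivial. Otherwise, for any ordinal $\gamma' < \gamma$ we have $\gamma' < \alpha = \sup\{\mathit{ht}_{\mathcal{S}_1}(a') + 1 \mid a' \prec_1 a\}$, so there is $a' \prec_1 a$ with $\gamma' \leqslant \mathit{ht}_{\mathcal{S}_1}(a')$; similarly there is $b' \prec_2 b$ with $\gamma' \leqslant \mathit{ht}_{\mathcal{S}_2}(b')$. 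Hence $\min\{\mathit{ht}_{\mathcal{S}_1}(a'), \mathit{ht}_{\mathcal{S}_2}(b')\} + 1 \geqslant \gamma' + 1$, so the supremum is $\geqslant \gamma' + 1$ for every $\gamma' < \gamma$, which forces it to be $\geqslant \gamma$.

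The main subtlety — really the only one — is the supremum manipulation in the second paragraph: one has to be careful that the bound holds for every $\gamma' < \gamma$ and then pass to the supremum correctly, distinguishing whether $\gamma$ is a successor or a limit (in the successor case $\gamma = \gamma' + 1$ pick that particular $\gamma'$; in the limit case take the sup over all $\gamma' < \gamma$). A clean way to package both is simply: the supremum is an ordinal $\geqslant \gamma' + 1$ for all $\gamma' < \gamma$, and any ordinal with that property is $\geqslant \gamma$. Combining the two inequalities yields the desired equality. Everything else — that $\prec$ on the product is well-founded, that the projections are homomorphisms — is routine and already granted by the surrounding text.
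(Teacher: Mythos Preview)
Your argument is correct. The paper does not give a proof of Proposition~\ref{prop2}, treating it as a standard fact about well-founded relations; your route---using the two projections as homomorphisms together with Proposition~\ref{prop1} for the inequality $\leqslant$, and a direct transfinite-induction unfolding of the height recursion for $\geqslant$---is exactly the natural way to fill this in.
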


For an element $a$ of a $\Box$-founded Magari algebra $\mathcal{A}$, define $\mathit{ht}_\mathcal{A}(a)$ as the ordinal height of $a$ with respect to $\prec_\mathcal{A}$. We put $\mathit{ht}_\mathcal{A}(a) = \infty$ if $a=1$.

\begin{lem}\label{basic}
Suppose $a$ and $b$ are elements of a $\Box$-founded Magari algebra $\mathcal{A}$. Then $\mathit{ht}_\mathcal{A} (a\wedge b) = \min \{\mathit{ht}_\mathcal{A} (a), \mathit{ht}_\mathcal{A} (b) \} $ and $\mathit{ht}_\mathcal{A} (a) +1 \leqslant \mathit{ht}_\mathcal{A} (\Box a)$, where we define $\infty+1 :=\infty$.\footnote{This lemma was inspired by a conversation with Fedor Pakhomov.}
\end{lem}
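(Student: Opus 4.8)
The plan is to prove the two claims separately, using the order-theoretic characterization of $\Box$-foundedness from Proposition~\ref{box-foundness is equivalent to well-foundness} together with the basic facts about well-founded sets recorded above. For the meet identity $\mathit{ht}_\mathcal{A}(a \wedge b) = \min\{\mathit{ht}_\mathcal{A}(a), \mathit{ht}_\mathcal{A}(b)\}$, the key observation is that the map $c \mapsto (c,c)$ embeds the well-founded set $(A \setminus \{1\}, \prec_\mathcal{A})$ into its own product with itself in a way that interacts well with $\wedge$. Concretely, if at least one of $a,b$ equals $1$, the identity is immediate (with the convention $\infty + 1 := \infty$ and $\min$ taken in the ordinals $\cup \{\infty\}$, noting $a \wedge 1 = a$). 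Otherwise $a, b, a\wedge b$ all lie in $A \setminus \{1\}$ (since $a \wedge b = 1$ would force $a = b = 1$), and I would argue $\mathit{ht}_\mathcal{A}(a \wedge b) = \mathit{ht}_{\mathcal{S} \times \mathcal{S}}((a,b))$ where $\mathcal{S} = (A\setminus\{1\}, \prec_\mathcal{A})$, then invoke Proposition~\ref{prop2}. The heights agree because the down-set of $(a,b)$ in the product and the down-set of $a \wedge b$ in $\mathcal{S}$ are ``isomorphic'' in the relevant sense: $c \prec_\mathcal{A} a \wedge b \iff \Box c \leqslant a \wedge b \iff \Box c \leqslant a$ and $\Box c \leqslant b \iff c \prec_\mathcal{A} a$ and $c \prec_\mathcal{A} b \iff (c,c) \prec (a,b)$. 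So the diagonal map $c \mapsto (c,c)$ restricted to $\{c : c \prec_\mathcal{A} a \wedge b\}$ is an order-isomorphism onto $\{(c_1,c_2) : (c_1,c_2) \prec (a,b), c_1 = c_2\}$, but one must also check that arbitrary $(c_1,c_2) \prec (a,b)$ does not contribute extra height beyond what the diagonal sees; this follows because $(c_1,c_2) \prec (a,b)$ implies $(c_1 \wedge c_2, c_1 \wedge c_2) \preccurlyeq$-dominates it in height (using the meet identity inductively, or more simply: $c_1 \wedge c_2 \prec_\mathcal{A} a\wedge b$ and $(c_1,c_2) \prec (c_1 \wedge c_2, c_1\wedge c_2)$... ). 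Here care is needed, so I would instead argue directly by transfinite induction on $\prec_\mathcal{A}$-rank: $\mathit{ht}_\mathcal{A}(a\wedge b) = \sup\{\mathit{ht}_\mathcal{A}(c)+1 : c \prec_\mathcal{A} a \wedge b\} = \sup\{\mathit{ht}_\mathcal{A}(c)+1 : c \prec_\mathcal{A} a,\ c \prec_\mathcal{A} b\}$, and compare this with $\min\{\sup\{\mathit{ht}_\mathcal{A}(c')+1: c' \prec_\mathcal{A} a\}, \sup\{\mathit{ht}_\mathcal{A}(c'')+1 : c'' \prec_\mathcal{A} b\}\}$; the $\leqslant$ direction is clear, and for $\geqslant$, given $c' \prec_\mathcal{A} a$ and $c'' \prec_\mathcal{A} b$ one takes $c = c' \wedge c''$, which satisfies $c \prec_\mathcal{A} a$ and $c \prec_\mathcal{A} b$, and by the induction hypothesis $\mathit{ht}_\mathcal{A}(c) = \min\{\mathit{ht}_\mathcal{A}(c'), \mathit{ht}_\mathcal{A}(c'')\}$.

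For the second claim, $\mathit{ht}_\mathcal{A}(a) + 1 \leqslant \mathit{ht}_\mathcal{A}(\Box a)$, I first dispose of the case $a = 1$: then $\Box a = \Box 1 = 1$, so both sides are $\infty$ and the inequality holds by the convention $\infty + 1 := \infty$. If $a \neq 1$, I claim $\Box a \neq 1$ as well: indeed if $\Box a = 1$ then $\Box a \leqslant a$, i.e.\ $a \prec_\mathcal{A} a$, contradicting that $\prec_\mathcal{A}$ is a strict partial order on $A \setminus \{1\}$ (Proposition from the excerpt — it is irreflexive), unless $a = 1$. Wait: we need $a \prec_\mathcal{A} a$ to be impossible for $a \neq 1$, which is exactly irreflexivity of the strict partial order. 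So $\Box a \in A \setminus \{1\}$. Now it suffices to show $a \prec_\mathcal{A} \Box a$, because then $\mathit{ht}_\mathcal{A}(\Box a) \geqslant \mathit{ht}_\mathcal{A}(a) + 1$ by definition of ordinal height. And $a \prec_\mathcal{A} \Box a$ means $\Box a \leqslant \Box a$, which is trivially true. Hence the inequality holds with room to spare.

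The main obstacle is the $\geqslant$ direction of the meet identity, where one must produce, from witnesses below $a$ and below $b$ separately, a single witness below $a \wedge b$ of the appropriate height — and this is precisely where the transfinite induction and the inductive use of the meet identity on smaller ranks are needed. The cleanest packaging is to phrase the whole meet-identity proof as a single transfinite induction on $\max\{\mathit{ht}_\mathcal{A}(a), \mathit{ht}_\mathcal{A}(b)\}$ (or on the rank of $a \wedge b$), reducing the general case to the case of elements of strictly smaller rank; alternatively, one can cite Proposition~\ref{prop2} after verifying that $c \mapsto (c \wedge c)$... no — after verifying that the assignment $(c_1, c_2) \mapsto c_1 \wedge c_2$ is a height-preserving surjection from the down-set of $(a,b)$ in $\mathcal{S} \times \mathcal{S}$ onto the down-set of $a \wedge b$ in $\mathcal{S}$ with a section given by the diagonal, so the two well-founded sets have the same ordinal height. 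Everything else is bookkeeping with the $\infty$ conventions. The second claim is essentially immediate once one notes $\Box a \leqslant \Box a$, so it is not where the work lies.
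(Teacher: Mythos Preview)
Your argument for the second inequality contains a genuine error. You case-split on whether $a=1$, and in the case $a\neq 1$ you assert that $\Box a \neq 1$ as well, justifying this by: ``if $\Box a = 1$ then $\Box a \leqslant a$, i.e.\ $a \prec_\mathcal{A} a$.'' But $\Box a = 1$ does \emph{not} imply $\Box a \leqslant a$; since $1$ is the top element, $1 \leqslant a$ holds only when $a=1$. More to the point, the implication $a\neq 1 \Rightarrow \Box a \neq 1$ is simply false in $\Box$-founded Magari algebras: take the two-element Boolean algebra with $\Box x := 1$ for all $x$, which is a $\Box$-founded Magari algebra in which $\Box 0 = 1$ while $0 \neq 1$. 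The fix is to reverse the case split: argue on whether $\Box a = 1$, as the paper does. If $\Box a = 1$ the right-hand side is $\infty$ and there is nothing to prove; if $\Box a \neq 1$ then $a\neq 1$ (since $\Box 1 = 1$), and $a \prec_\mathcal{A} \Box a$ is immediate from $\Box a \leqslant \Box a$, giving the inequality.

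For the meet identity your transfinite-induction approach is correct once written out, but it is more laborious than necessary. The paper avoids any induction by observing that the map $f\colon(c,d)\mapsto c\wedge d$ is a homomorphism from $\mathcal{S}\times\mathcal{S}$ to $\mathcal{S}$ (because $\Box(c\wedge d)=\Box c\wedge\Box d$), so Proposition~\ref{prop1} gives $\min\{\mathit{ht}_\mathcal{S}(a),\mathit{ht}_\mathcal{S}(b)\}=\mathit{ht}_{\mathcal{S}\times\mathcal{S}}((a,b))\leqslant \mathit{ht}_\mathcal{S}(a\wedge b)$ in one stroke; the reverse inequality follows, as you note, from the inclusion of down-sets. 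Your alternative suggestion of using $(c_1,c_2)\mapsto c_1\wedge c_2$ is in fact exactly this, but you only need it to be a homomorphism, not a ``height-preserving surjection with a section.''
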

\begin{proof}
Assume we have a $\Box$-founded Magari algebra $\mathcal{A}=( A, \wedge, \vee, \to, 0, 1, \Box )$ and two elements $a$ and $b$ of $\mathcal{A}$.

First, we prove that $\mathit{ht}_\mathcal{A} (a\wedge b) = \min \{\mathit{ht}_\mathcal{A} (a), \mathit{ht}_\mathcal{A} (b) \} $. If $a=1$ or $b=1$, then the equality immediately holds. Suppose $a \neq 1$ and $b \neq 1$. Let $\mathcal{S}$ be the set $A\setminus \{1\}$ together with the well-founded relation $\prec_\mathcal{A}$. We have $a\wedge b \neq 1$, $\mathit{ht}_\mathcal{A} (a) = \mathit{ht}_\mathcal{S} (a)$, $\mathit{ht}_\mathcal{A} (b) = \mathit{ht}_\mathcal{S} (b)$ and $\mathit{ht}_\mathcal{A} (a\wedge b) = \mathit{ht}_\mathcal{S} (a\wedge b)$. The mapping
\[f \colon (c,d) \mapsto c\wedge d\] 
is a homomorphism from $\mathcal{S} \times \mathcal{S}$ to $\mathcal{S}$. From Proposition \ref{prop2} and Proposition \ref{prop1}, we have  
\[\min \{\mathit{ht}_\mathcal{S} (a), \mathit{ht}_\mathcal{S} (b) \}
= \mathit{ht}_{\mathcal{S}\times \mathcal{S}} ((a,b)) \leqslant \mathit{ht}_\mathcal{S} (a\wedge b).\]
Consequently, \[\min \{\mathit{ht}_\mathcal{A} (a), \mathit{ht}_\mathcal{A} (b) \} \leqslant \mathit{ht}_\mathcal{A} (a\wedge b).\]
On the other hand, $\mathit{ht}_\mathcal{A} (a\wedge b) \leqslant \mathit{ht}_\mathcal{A} (a)$ since 
\[\{ e \in A \setminus \{1\} \mid e \prec_\mathcal{A} (a\wedge b)\} \subset \{ e \in A \setminus \{1\} \mid e \prec_\mathcal{A} a\}.\]
Analogously, we have  $\mathit{ht}_\mathcal{A} (a\wedge b) \leqslant \mathit{ht}_\mathcal{A} (b)$. It follows that 
\[\mathit{ht}_\mathcal{A} (a\wedge b) = \min \{\mathit{ht}_\mathcal{A} (a), \mathit{ht}_\mathcal{A} (b) \} .\]

Now we prove $\mathit{ht}_\mathcal{A} (a) +1 \leqslant \mathit{ht}_\mathcal{A} (\Box a)$. If $\Box a=1$, then the inequality immediately holds. Suppose $\Box a \neq 1$. Then $a \neq 1$. We see $a \prec_\mathcal{A} \Box a$. The required inequality holds from the definition of $\mathit{ht}_\mathcal{A}$. 
\end{proof}

For a $\Box$-founded Magari algebra $\mathcal{A}=(A, \wedge, \vee, \to, 0, 1, \Box)$ and an ordinal $\gamma$, put $\mathit{M}_\mathcal{A}(\gamma)= \{ a \in A \mid \gamma \leqslant\mathit{ht}_\mathcal{A} (a)\} $.
We see that $\mathit{M}_\mathcal{A}(0)= A$ and $\mathit{M}_\mathcal{A}(\eta) \supset \mathit{M}_\mathcal{A}(\gamma)$ whenever $\eta \leqslant \gamma$. 

\begin{lem}
For any $\Box$-founded Magari algebra $\mathcal{A}$ and any ordinal $\gamma$, the set $\mathit{M}_\mathcal{A}(\gamma) $ is a filter in $\mathcal{A}$. 
\end{lem}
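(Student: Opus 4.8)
The plan is to verify directly that $\mathit{M}_\mathcal{A}(\gamma)$ is upward closed, contains $1$, and is closed under binary meets, using Lemma~\ref{basic} as the essential tool. For upward closure, suppose $a \in \mathit{M}_\mathcal{A}(\gamma)$ and $a \leqslant b$ in $\mathcal{A}$. Then $a = a \wedge b$, so by Lemma~\ref{basic} we have $\mathit{ht}_\mathcal{A}(a) = \min\{\mathit{ht}_\mathcal{A}(a), \mathit{ht}_\mathcal{A}(b)\} \leqslant \mathit{ht}_\mathcal{A}(b)$, whence $\gamma \leqslant \mathit{ht}_\mathcal{A}(a) \leqslant \mathit{ht}_\mathcal{A}(b)$ and $b \in \mathit{M}_\mathcal{A}(\gamma)$. (Here I read $\gamma \leqslant \infty$ as always true, consistent with the convention $\mathit{ht}_\mathcal{A}(1) = \infty$.)

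For the top element, $\mathit{ht}_\mathcal{A}(1) = \infty$, so $\gamma \leqslant \mathit{ht}_\mathcal{A}(1)$ for every ordinal $\gamma$, and hence $1 \in \mathit{M}_\mathcal{A}(\gamma)$. For closure under meets, suppose $a, b \in \mathit{M}_\mathcal{A}(\gamma)$, i.e. $\gamma \leqslant \mathit{ht}_\mathcal{A}(a)$ and $\gamma \leqslant \mathit{ht}_\mathcal{A}(b)$. By Lemma~\ref{basic}, $\mathit{ht}_\mathcal{A}(a \wedge b) = \min\{\mathit{ht}_\mathcal{A}(a), \mathit{ht}_\mathcal{A}(b)\} \geqslant \gamma$, so $a \wedge b \in \mathit{M}_\mathcal{A}(\gamma)$. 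These three facts together say exactly that $\mathit{M}_\mathcal{A}(\gamma)$ is a filter.

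I do not expect any serious obstacle here; the only mild subtlety is bookkeeping around the symbol $\infty$ and the convention $\infty + 1 := \infty$, $\gamma \leqslant \infty$, so that the ordinal inequalities make sense uniformly whether or not $1$ is among the elements in question. Since Lemma~\ref{basic} already packages both the meet formula and the interaction with $\Box$ (the latter is not needed for this statement), the argument is essentially a two-line unwinding of definitions once that lemma is in hand.
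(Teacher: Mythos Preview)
Your proof is correct and follows essentially the same approach as the paper: both arguments verify closure under meets and upward closure directly from the meet formula $\mathit{ht}_\mathcal{A}(a\wedge b)=\min\{\mathit{ht}_\mathcal{A}(a),\mathit{ht}_\mathcal{A}(b)\}$ of Lemma~\ref{basic}. The only cosmetic difference is that you also spell out $1\in \mathit{M}_\mathcal{A}(\gamma)$ explicitly, which the paper leaves implicit.
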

\begin{proof}
Suppose $a$ and $b$ belong to $\mathit{M}_\mathcal{A}(\gamma) $. Then $\gamma \leqslant \mathit{ht}_\mathcal{A} (a)$ and $\gamma \leqslant \mathit{ht}_\mathcal{A} (b)$. We have 
$\gamma \leqslant \min \{\mathit{ht}_\mathcal{A} (a), \mathit{ht}_\mathcal{A} (b) \} = \mathit{ht}_\mathcal{A} (a\wedge b)  $ by Lemma \ref{basic}. Consequently $a \wedge b$ belongs to $\mathit{M}_\mathcal{A}(\gamma) $.

Now suppose $c$  belongs to $\mathit{M}_\mathcal{A}(\gamma) $ and $c\leqslant d$. We shall show that $d \in\mathit{M}_\mathcal{A}(\gamma) $. We have $\gamma \leqslant \mathit{ht}_\mathcal{A} (c) = \mathit{ht}_\mathcal{A} (c\wedge d) = \min \{\mathit{ht}_\mathcal{A} (c), \mathit{ht}_\mathcal{A} (d) \} \leqslant \mathit{ht}_\mathcal{A} (d) $ by Lemma \ref{basic}. Hence $d \in\mathit{M}_\mathcal{A}(\gamma) $.  
\end{proof}

Let $\mathit{Ult}\: \mathcal{A}$ be the set of all ultrafilters of (the Boolean part of) a Magari algebra $\mathcal{A}=(A, \wedge, \vee, \to, 0, 1, \Box)$. Put $\widehat{a} = \{u \in \mathit{Ult}\: \mathcal{A} \mid a \in u\}$ for $a\in A$. We recall that the mapping $\:\widehat{\cdot}\;\colon a \mapsto \widehat{a}\:$ is an embedding of the Boolean algebra $(A, \wedge, \vee, \to, 0, 1)$ into the powerset Boolean algebra $\mathcal{P}(\mathit{Ult}\: \mathcal{A})$ by Stone's representation theorem.


\begin{lem}\label{important lemma}
Suppose $\mathcal{A}=(A, \wedge, \vee, \to, 0, 1, \Box)$  is a $\Box$-founded Magari algebra and $F$ is a filter of $\mathcal{A}$ such that $F\subset \Box^{-1} 1$, where $\Box^{-1} 1= \{a\in A \mid \Box a =1\}$. Then there exists a scattered topology $\tau$ on $\mathit{Ult}\: \mathcal{A}$ such that $\widehat{\Box a} = \mathit{cd}_\tau (\widehat{a})$ for any element $a$ of $\mathcal{A}$. Moreover, $\mathit{cd}_\tau (\bigcap \{\widehat{a}\mid a\in F\})= \mathit{Ult}\: \mathcal{A}$.
\end{lem}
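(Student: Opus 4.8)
The plan is to build the topology $\tau$ as the Esakia topology coming from a modified Magari operation on $\mathcal{P}(\mathit{Ult}\:\mathcal{A})$, so that the co-derived set operator of $\tau$ restricted to the image of $\widehat{\cdot}$ agrees with $\widehat{\Box(\cdot)}$. Concretely, I would first define, for $V\subset\mathit{Ult}\:\mathcal{A}$, the operator
\[
\Box^\ast V \coloneq \bigcup\{\widehat{\Box a}\mid a\in A,\ \widehat{a}\subset V\}.
\]
One checks routinely that $\Box^\ast$ extends $\widehat{\cdot}$-conjugation (i.e. $\Box^\ast\widehat{a}=\widehat{\Box a}$, using monotonicity of $\Box$ and the fact that $\widehat{a}\subset\widehat{b}\Leftrightarrow a\leqslant b$), that $\Box^\ast X=X$, and that $\Box^\ast(V\cap W)=\Box^\ast V\cap\Box^\ast W$ — the last identity requires a small argument pushing a covering of $V\cap W$ down to meets $\widehat{a}\cap\widehat{b}=\widehat{a\wedge b}$. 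The diagonal identity $\Box^\ast(\Box^\ast V\to V)=\Box^\ast V$ is the delicate point and I expect it to need the representability of each $\widehat{a}$ together with Löb's identity in $\mathcal{A}$; alternatively one may first pass to the Stone space and use that $\mathcal{P}(\mathit{Ult}\:\mathcal{A})$ with $\Box^\ast$ is exactly the canonical extension of the Magari algebra $\mathcal{A}$, which is again a Magari algebra (a standard fact; see the Esakia-duality literature cited in the paper). Thus $(\mathit{Ult}\:\mathcal{A},\Box^\ast)$ is an Esakia frame, and by the Simmons–Esakia correspondence (Proposition \ref{from scattered spaces to Esakia frames} and the following proposition) there is a unique scattered topology $\tau$ with $\mathit{cd}_\tau=\Box^\ast$; in particular $\mathit{cd}_\tau(\widehat{a})=\widehat{\Box a}$ for all $a\in A$, which is the first claim.

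For the "moreover" part, write $Y\coloneq\bigcap\{\widehat{a}\mid a\in F\}$. I must show $\mathit{cd}_\tau(Y)=\mathit{Ult}\:\mathcal{A}$, i.e. every ultrafilter $u$ has a punctured $\tau$-neighbourhood contained in $Y$. Here is where $\Box$-foundedness and the hypothesis $F\subset\Box^{-1}1$ enter. Since $F$ is a filter with $\Box a=1$ for every $a\in F$, and $\mathcal{A}$ is $\Box$-founded, I would argue that $Y$ is actually the whole space up to the derived-set level: for a fixed $u$, it suffices to produce $a\in F$ with $u\in\mathit{cd}_\tau(\widehat{a})=\widehat{\Box a}=\widehat{1}=\mathit{Ult}\:\mathcal{A}$, which holds for \emph{any} $a\in F$ (recall $F\subset\Box^{-1}1$ gives $\Box a=1$). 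So $u\in\widehat{\Box a}$ trivially, meaning $u$ has a punctured $\tau$-neighbourhood inside $\widehat{a}$; the only remaining issue is to get a \emph{single} punctured neighbourhood inside $Y=\bigcap_{a\in F}\widehat{a}$ rather than one for each $a$. This is the genuine obstacle: $F$ may be infinite, so I cannot simply intersect the punctured neighbourhoods.

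To overcome it I would use $\Box$-foundedness more seriously, via the ordinal-height machinery of Section \ref{s4}. The idea: the punctured $\tau$-neighbourhood of $u$ witnessing $u\in\widehat{\Box a}$ can be taken to be $\widehat{a}\setminus\{u\}$ itself when $\widehat{a}$ is $\tau$-open, and membership in $F$ together with $\Box a=1$ forces $a$ to have ordinal height $\infty$, i.e. $a=1$ would be too strong, but $\mathit{ht}_\mathcal{A}(a)$ arbitrarily large. More precisely, since $\Box a=1$ means $\mathit{ht}_\mathcal{A}(\Box a)=\infty$, Lemma \ref{basic} is not directly contradictory, so instead I would show directly that $Y$ is $\tau$-dense-with-punctures by a transfinite argument on heights: the open sets of $\tau$ of "level $\gamma$" are governed by $M_\mathcal{A}(\gamma)$, and $\bigcap\{\widehat a\mid a\in F\}$ contains $\bigcap\{\widehat a\mid a\in M_\mathcal{A}(\gamma)\}$ for every $\gamma$ (since each $a\in F$ has $\Box a=1$ hence, after one more application of $\Box$, unbounded height — or more carefully, $F$ itself is contained in the filter $M_\mathcal{A}(\gamma)$ for cofinally many $\gamma$ because $\Box$-founded algebras have $M_\mathcal{A}(\gamma)=\{1\}$ for large $\gamma$, forcing $F=\{1\}$ in the relevant quotient). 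Tracing through, $\mathit{cd}_\tau(Y)$ ends up equal to $\mathit{cd}_\tau$ of an intersection of $\tau$-opens each of which is $\Box^\ast$-large, and scatteredness of $\tau$ then yields $\mathit{cd}_\tau(Y)=\mathit{Ult}\:\mathcal{A}$. I expect the write-up to phrase this last step as: $Y$ contains a punctured neighbourhood basis element at each point because the filter $F$, being inside $\Box^{-1}1$, is carried by the \emph{co-derived} structure rather than by points, and $\Box$-foundedness guarantees there is no infinite descending obstruction to realizing all of $F$ simultaneously in one punctured neighbourhood.
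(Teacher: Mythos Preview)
Your proposal has a genuine gap in both parts.

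For the first claim, you define $\Box^\ast V=\bigcup\{\widehat{\Box a}\mid \widehat a\subset V\}$ and assert that $(\mathcal P(\mathit{Ult}\,\mathcal A),\Box^\ast)$ is a Magari algebra, citing this as ``a standard fact'' about canonical extensions. It is not: the L\"ob identity is not preserved under canonical extensions in general, and you give no argument that $\Box$-foundedness of $\mathcal A$ repairs this. You yourself flag the L\"ob identity for $\Box^\ast$ as ``the delicate point'' and then move on. Without it you have no Esakia frame and hence no scattered topology. The paper does not attempt anything like this; instead it passes to the quotient $\mathcal A/F$, shows it is again $\Box$-founded, pulls back the height filters $M_{\mathcal A/F}(\gamma)$ to filters $F(\gamma)$ on $\mathcal A$, and assigns each ultrafilter an ordinal rank $\mathit{rk}(u)$. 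The topology is then \emph{defined explicitly}: $V\in\tau$ iff every $u\in V$ admits $a$ with $\Box a\in u$ and $\widehat{\boxdot a}\cap I(\mathit{rk}(u))\subset V$, where $I(\gamma)=\{u\mid\mathit{rk}(u)<\gamma\}$. Scatteredness is then immediate (pick a point of minimal rank), and the identity $\widehat{\Box a}=\mathit{cd}_\tau(\widehat a)$ is checked by a direct two-inclusion argument using Lemma~\ref{basic}.

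For the ``moreover'' clause you correctly isolate the obstacle: knowing $\Box a=1$ for each $a\in F$ gives a punctured neighbourhood inside each $\widehat a$ separately, but not a single one inside $Y=\bigcap_{a\in F}\widehat a$. Your proposed fix via heights is not an argument---in particular your topology does not depend on $F$ at all, so there is no mechanism by which the hypothesis $F\subset\Box^{-1}1$ could influence the punctured-neighbourhood structure. In the paper this is exactly what the rank does: because $\mathit{rk}$ is computed in the quotient $\mathcal A/F$, every ultrafilter $u$ with $\mathit{rk}(u)<\mathit{ht}(\mathcal A/F)$ automatically contains $F$, so the basic punctured neighbourhood $I(\mathit{rk}(d))$ of any point $d$ already lies inside $Y$. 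The construction of $\tau$ is tailored to $F$ from the start; an $F$-independent topology like yours cannot deliver this conclusion.
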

\begin{proof}

{Assume we have a $\Box$-founded Magari algebra $\mathcal{A}=(A, \wedge, \vee, \to, 0, 1, \Box)$ and a filter $F$ of $\mathcal{A}$ such that $F\subset \Box^{-1} 1$. Notice that $F$ is an open filter, i.e. $\Box a \in F$ whenever $a\in F$. Indeed, if $a\in F$, then $\Box a =1$ and $\Box a \in F$. Hence, the quotient Magari algebra $\mathcal{A}/ F$ and the canonical epimorphism $f\colon \mathcal{A}\to\mathcal{A} / F$ are well-defined. 

Now we check that the algebra $\mathcal{A}/ F$ is $\Box$-founded. Assume there exists a sequence $(a_i)_{i\in \mathbb{N}}$ of elements of $\mathcal{A}$ such that $\Box_0 f(a_{i+1})\leqslant f(a_{i})$. We see that $f(\Box_0 a_{i+1} \to a_i)=1$ and $(\Box_0 a_{i+1} \rightarrow a_i) \in F$.   
Since $\Box_0 b=1$ for any $b \in F$, we have $\Box_0 \Box_0 a_{i+1} \leqslant  \Box_0 a_i$ in $\mathcal{A}$. From $\Box$-foundedness of $\mathcal{A}$, we obtain $\Box_0 a_i =1$ for any $i\in \mathbb{N}$. Since $(\Box_0 a_{i+1} \rightarrow a_i) \in F$, we also have $a_i \in F$ for all $i\in \mathbb{N}$. Consequently, $ f (a_0) =1$. The algebra $\mathcal{A}/ F$ is $\Box$-founded.

Let $\mathit{ht}(\mathcal{A}/F) \coloneq \sup \{ \mathit{ht}_{\mathcal{A}/F} (f(a))+1 \mid a \in A\setminus F \}$ and $F(\gamma)\coloneq f^{-1}(\mathit{M}_{\mathcal{A}/F}(\gamma))$. We see that $F(\gamma)$ is a filter of $\mathcal{A}$ for any ordinal $\gamma$ and $F(\mathit{ht}(\mathcal{A}/F))= F$. 
For an ultrafilter $u$ of $\mathcal{A}$, we set 
\[
\mathit{rk}(u):= 
\begin{cases}
 \min \{ \eta \dotminus 1 \mid \eta\leqslant \mathit{ht}(\mathcal{A}/F) \text{ and } F(\eta) \subset u\} & \text{if $F\subset u$};  \\
 \mathit{ht}(\mathcal{A}/F) & \text{otherwise}.
\end{cases}\]
In this definition, for any ordinal $\gamma$, $0\dotminus 1\coloneq 0$, $(\gamma+1)\dotminus 1 \coloneq \gamma$ and $\gamma\dotminus 1\coloneq \gamma$ if $\gamma$ is a limit ordinal. In addition, we put $\mathit{I}(\gamma) := \{u \in \mathit{Ult}\: \mathcal{A} \mid \mathit{rk}(u) < \gamma\}$. 
}

Set $\tau = \{ V \subset \mathit{Ult}\: \mathcal{A} \mid \forall u\in V \; \exists a \in A \;\; (\Box a \in u ) \wedge (\widehat{\boxdot a} \cap \mathit{I}(\mathit{rk}(u))\subset V) \}$,
where $\boxdot a = a \wedge \Box a$.

Let us check that $\tau$ is a topology on $\mathit{Ult}\: \mathcal{A}$. Trivially, $\emptyset \in \tau$ and $\tau$ is closed under arbitrary unions. For any $u \in  \mathit{Ult}\: \mathcal{A}$, we see that $\Box 1 = 1 \in u$ and $\widehat{\boxdot 1} \cap \mathit{I}(\mathit{rk}(u)) \subset \mathit{Ult}\:  \mathcal{A}$. Consequently $\mathit{Ult}\: \mathcal{A} \in \tau$. Assume $S_0 \in \tau$ and $S_1\in \tau$. Consider an arbitrary $u \in S_0 \cap S_1$. By definition of $\tau$, there exist elements $ b$ and $ c$ of $A$ such that $\Box b \in u$, $\Box c \in u$, $\widehat{\boxdot b} \cap \mathit{I}(\mathit{rk}(u))\subset S_0$ and $\widehat{\boxdot c} \cap \mathit{I}(\mathit{rk}(u))\subset S_1$.
We have $\Box (b \wedge c)=(\Box b \wedge \Box c) \in u$ and $\widehat{\boxdot (b\wedge c)} \cap \mathit{I}(\mathit{rk}(u)) = \widehat{\boxdot a} \cap \widehat{\boxdot c} \cap \mathit{I}(\mathit{rk}(u)) \subset S_0 \cap S_1$. Therefore $S_0 \cap S_1 \in \tau$. This shows that $\tau$ is a topology on $\mathit{Ult}\: \mathcal{A}$. 

It easily follows from the definition of $\tau$ that $\widehat{\boxdot a}\in \tau$, for any $a\in A$, and $ \mathit{I}(\gamma)\in \tau$, for any ordinal $\gamma$. Now we claim that $\tau$ is scattered. Consider any non-empty subset $S$ of  $\mathit{Ult}\: \mathcal{A}$. There is an ultrafilter $h\in S$ such that $\mathit{rk}(h) =\min \{\mathit{rk}(u) \mid u \in S\}$. We see that a set $\{h\} \cup \mathit{I}(\mathit{rk}(h))$ is a $\tau$-neighbourhood of $h$ and $S \cap \left(\{h\} \cup \mathit{I}(\mathit{rk}(h))\right) =\{h\}$. Hence the ultrafilter $h$ is an isolated point in $S$. This proves that $\tau$ is a scattered topology.    

Let us show that $\widehat{\Box a} = \mathit{cd}_\tau (\widehat{a})$ for any $a \in A$. First, we check that  $\widehat{\Box a} \subset \mathit{cd}_\tau (\widehat{a})$. For any ultrafilter $u$, if $u \in \widehat{\Box a}$, then $\widehat{\boxdot a} \cap \mathit{I}(\mathit{rk}(u))$ is a punctured neighbourhood of $u$. Also, $\widehat{\boxdot a} \cap \mathit{I}(\mathit{rk}(u)) \subset \widehat{a}$. By definition of the co-derived-set operator, $u \in  \mathit{cd}_\tau (\widehat{a})$. Consequently $\widehat{\Box a} \subset \mathit{cd}_\tau (\widehat{a})$. 

Now we claim that $\mathit{cd}_\tau (\widehat{a}) \subset\widehat{\Box a}$. Consider any ultrafilter $u$ such that $u \nin \widehat{\Box a}$. Let $W$ be an arbitrary punctured neighbourhood of $u$. It is sufficient to show that $W$ is not included in $\widehat{a}$. 


{By the definition of $\tau$, there exists an element $e$ of $A$ such that $\Box e \in u$ and $\widehat{\boxdot e} \cap \mathit{I}(\mathit{rk}(u)) \subset W$. From the conditions $\Box e \in u$ and $\Box a \nin u$, it follows that $\Box (\boxdot e \to a) \nin u$. Note that $\Box (\boxdot e \to a) \neq 1$, $ (\boxdot e \to a) \nin \Box^{-1} 1$, $ (\boxdot e \to a) \nin F$ and $\mathit{ht}_{\mathcal{A}/F}(\boxdot f(e) \to f(a))\neq \infty$. 

Let us check that $\mathit{ht}_{\mathcal{A}/F}(\boxdot f(e) \to f(a))< \mathit{rk}(u)$. 
If $F\subset u$, then $\Box (\boxdot e \to a) \nin F(\mathit{rk}(u)+1) \subset u$ and $\mathit{ht}_{\mathcal{A}/F}(\Box (\boxdot f(e) \to f(a)))\leqslant \mathit{rk}(u)$. From Lemma \ref{basic}, we have $\mathit{ht}_{\mathcal{A}/F}(\boxdot f(e) \to f(a))+1 \leqslant \mathit{ht}_{\mathcal{A}/F}(\Box (\boxdot f(e) \to f(a)))\leqslant \mathit{rk}(u)$. Hence, $\mathit{ht}_{\mathcal{A}/F}(\boxdot f(e) \to f(a))< \mathit{rk}(u)$. 
If $F\nsubset u$, then $\mathit{ht}_{\mathcal{A}/F}(\boxdot f(e) \to f(a))< \mathit{ht}(\mathcal{A}/F)= \mathit{rk}(u)$ since $ (\boxdot e \to a) \nin F$. Consequently, the inequality 
\[\mathit{ht}_{\mathcal{A}/F}(\boxdot f(e) \to f(a))< \mathit{rk}(u)\] 
holds in both cases. 

Recall that $\mathit{ht}_{\mathcal{A}/F}(\boxdot f(e) \to f(a))\neq \infty$. Therefore, 
\[(\boxdot e \to a) \nin F(\mathit{ht}_{\mathcal{A}/F}(\boxdot f(e) \to f(a))+1).\]
By the Boolean ultrafilter theorem, there exists an ultrafilter $h$ of $\mathcal{A}$ such that $(\boxdot e \to a) \nin h$ and $F(\mathit{ht}_{\mathcal{A}/F}(\boxdot f(e) \to f(a))+1)  \subset h$. We see that $\boxdot e \in h$, $a \nin h$, $F\subset h$ and $\mathit{rk}(h)\leqslant \mathit{ht}_{\mathcal{A}/F}(\boxdot f(e) \to f(a))$. We also have $\mathit{ht}_{\mathcal{A}/F}(\boxdot f(e) \to f(a))< \mathit{rk}(u)$.
It follows that $\mathit{rk}(h) < \mathit{rk}(u)$, $h \in \widehat{\boxdot e} \cap \mathit{I}(\mathit{rk}(u))$ and $h\nin\widehat{a}$. Consequently, $h$ is an element of $W$, which does not belong to $\widehat{a}$.
}

We obtain that none of the punctured neighbourhoods of $u$ are included in $\widehat{a}$. In other words, $u \nin \mathit{cd}_\tau (\widehat{a})$ for any $u \nin \widehat{\Box a}$. We establish that $\mathit{cd}_\tau (\widehat{a})\subset \widehat{\Box a} $. Hence, $\widehat{\Box a} = \mathit{cd}_\tau (\widehat{a})$.

{It remains to show that $\mathit{cd}_\tau (\bigcap \{\widehat{a}\mid a\in F\})= \mathit{Ult}\: \mathcal{A}$. Note that $\mathit{I}(\mathit{rk}(d))$ is a punctured neighbourhood of $d$ for any $d\in \mathit{Ult}\: \mathcal{A}$. Since $\mathit{rk}(d) \leqslant \mathit{ht}(\mathcal{A}/F)$, we have  
\[\mathit{I}(\mathit{rk}(d)) \subset \mathit{I}( \mathit{ht}(\mathcal{A}/F)) \subset \{u\in \mathit{Ult}\: \mathcal{A}\mid F\subset u\}= \bigcap \{\widehat{a}\mid a\in F\}.\]
Therefore, any ultrafilter has a punctured neighbourhood that is included in $ \bigcap \{\widehat{a}\mid a\in F\}$. Consequently, $\mathit{cd}_\tau (\bigcap \{\widehat{a}\mid a\in F\})= \mathit{Ult}\: \mathcal{A}$.  }

\end{proof}

\begin{thm}\label{representation of Magari algebras}
A Magari algebra is $\Box$-founded if and only if it is embeddable into the powerset Magari algebra of an Esakia frame.
\end{thm}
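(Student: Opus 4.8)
The plan is to obtain both implications directly from results established above, so that almost no new work is needed. For the implication ``embeddable $\Rightarrow$ $\Box$-founded'', suppose $e$ is a Magari-algebra embedding of $\mathcal{A}$ into the powerset Magari algebra of an Esakia frame $\mathcal{X}=(X,\Box)$. Since $\mathcal{P}(X)$ is a complete, hence $\sigma$-complete, Boolean algebra, Proposition \ref{from sigma-completness to box-foundness} shows that its Magari expansion is $\Box$-founded. I would then transfer this to $\mathcal{A}$ along $e$: given a sequence $(a_i)_{i\in\mathbb{N}}$ in $\mathcal{A}$ with $\Box a_{i+1}\leqslant a_i$, the images satisfy $\Box e(a_{i+1})=e(\Box a_{i+1})\leqslant e(a_i)$ in $\mathcal{P}(X)$, so $e(a_0)=1=e(1)$, and injectivity of $e$ yields $a_0=1$.

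For the implication ``$\Box$-founded $\Rightarrow$ embeddable'', suppose $\mathcal{A}=(A,\wedge,\vee,\to,0,1,\Box)$ is $\Box$-founded. I would apply Lemma \ref{important lemma} with the trivial filter $F=\{1\}$; this is admissible because $\Box 1=1$ gives $\{1\}\subseteq\Box^{-1}1$. The lemma then provides a scattered topology $\tau$ on $\mathit{Ult}\:\mathcal{A}$ with $\widehat{\Box a}=\mathit{cd}_\tau(\widehat{a})$ for every $a\in A$. By Proposition \ref{from scattered spaces to Esakia frames}, $\mathcal{X}:=(\mathit{Ult}\:\mathcal{A},\mathit{cd}_\tau)$ is an Esakia frame. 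Finally, by Stone's representation theorem the map $\:\widehat{\cdot}\:$ is an injective Boolean homomorphism of the Boolean reduct of $\mathcal{A}$ into $\mathcal{P}(\mathit{Ult}\:\mathcal{A})$, and the identity $\widehat{\Box a}=\mathit{cd}_\tau(\widehat{a})$ says precisely that it also commutes with $\Box$; hence $\:\widehat{\cdot}\:$ is an embedding of $\mathcal{A}$ into the powerset Magari algebra of $\mathcal{X}$.

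In truth there is no remaining obstacle: the genuinely hard step, namely building a scattered topology on the Stone space of $\mathcal{A}$ whose co-derived-set operator reproduces $\Box$, has already been carried out in Lemma \ref{important lemma}. In writing up I would only need to be careful to check that $F=\{1\}$ meets the hypothesis of that lemma and to state explicitly that a Boolean embedding commuting with the modal operator is a Magari-algebra embedding. (Observe that the ``moreover'' clause of Lemma \ref{important lemma} collapses to the trivial assertion $\mathit{cd}_\tau(\mathit{Ult}\:\mathcal{A})=\mathit{Ult}\:\mathcal{A}$ when $F=\{1\}$ and is irrelevant here; it is retained for use in the next section.)
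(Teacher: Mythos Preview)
Your proposal is correct and follows essentially the same route as the paper: for the ``if'' direction the paper invokes Proposition~\ref{from sigma-completness to box-foundness} and the (immediate) fact that subalgebras of $\Box$-founded algebras are $\Box$-founded, while for the ``only if'' direction it applies Lemma~\ref{important lemma} with $F=\{1\}$, then Proposition~\ref{from scattered spaces to Esakia frames}, and concludes that the Stone map is a Magari embedding. Your write-up is slightly more explicit (verifying $\{1\}\subseteq\Box^{-1}1$ and spelling out the transfer of $\Box$-foundedness along $e$), but there is no substantive difference.
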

\begin{proof}
(if) Suppose a Magari algebra $\mathcal{A}$ is isomorphic to a subalgebra of the powerset Magari algebra of an Esakia frame $\mathcal{X}$. The powerset Magari algebra of $\mathcal{X}$ is $\sigma$-complete. Hence, by Proposition \ref{from sigma-completness to box-foundness}, it is $\Box$-founded. Since any subalgebra of a $\Box$-founded Magari algebra is $\Box$-founded, the algebra $\mathcal{A}$ is $\Box$-founded.

(only if) Suppose a Magari algebra $\mathcal{A}$ is $\Box$-founded. By Lemma \ref{important lemma}, {for the filter $F=\{1\}$}, there exists a scattered topology $\tau$ on $\mathit{Ult}\: \mathcal{A}$ such that $\widehat{\Box a} = \mathit{cd}_\tau (\widehat{a})$ for any element $a$ of $\mathcal{A}$. We know that $\mathcal{X}= (\mathit{Ult}\: \mathcal{A}, \mathit{cd}_\tau)$ is an Esakia frame by Proposition \ref{from scattered spaces to Esakia frames}. We see that the mapping $\:\widehat{\cdot}\;\colon a \mapsto \widehat{a}\:$ is an injective homomorphism from $\mathcal{A}$ to the powerset Magari algebra of the frame $\mathcal{X}$. Therefore the algebra $\mathcal{A}$ is embeddable into the powerset Magari algebra of an Esakia frame.
\end{proof}

For a Magari algebra $\mathcal{A}$, by $\mathit{Top} \:\mathcal{A}$, we denote the set of all scattered topologies $\tau $ on $\mathit{Ult}\: \mathcal{A}$ such that 
$\widehat{\Box a} = \mathit{cd}_\tau (\widehat{a})$ for any element $a$ of $\mathcal{A}$.
\begin{lem}\label{maximal extensions lemma}
Suppose $\mathcal{A}$ is a Magari algebra and $\tau \in \mathit{Top}\: \mathcal{A}$. Then there is a maximal with respect to inclusion element of $\mathit{Top}\: \mathcal{A}$ that extends $\tau$. 
\end{lem}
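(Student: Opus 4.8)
\textit{Proof proposal.}
The plan is to apply Zorn's lemma to the poset $P$ consisting of all $\sigma \in \mathit{Top}\:\mathcal{A}$ with $\tau \subseteq \sigma$, ordered by inclusion. Since $\tau \in P$, this poset is nonempty, and a maximal element of $P$ is automatically a maximal element of $\mathit{Top}\:\mathcal{A}$: if $\rho \in \mathit{Top}\:\mathcal{A}$ extends such an element $\sigma^\ast$, then $\tau \subseteq \sigma^\ast \subseteq \rho$, so $\rho \in P$ and $\rho = \sigma^\ast$ by maximality in $P$. Thus everything reduces to showing that every chain $C = \{\tau_i\}_{i\in I}$ in $P$ has an upper bound in $P$; the empty chain is handled by $\tau$ itself, so assume $C \neq \emptyset$.

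The first point to notice is that $\bigcup_{i\in I}\tau_i$ need not be a topology, so instead I would let $\tau_\infty$ be the topology \emph{generated} by $\mathcal{B} \coloneq \bigcup_{i\in I}\tau_i$. Because $C$ is a chain, $\mathcal{B}$ is closed under finite intersections and contains $\mathit{Ult}\:\mathcal{A}$, hence is a basis, and the $\tau_\infty$-open sets are exactly the unions of members of $\mathcal{B}$. Clearly $\tau \subseteq \tau_i \subseteq \mathcal{B} \subseteq \tau_\infty$ for each $i$, so $\tau_\infty$ extends $\tau$ and is an inclusion-upper-bound of $C$ — provided $\tau_\infty \in \mathit{Top}\:\mathcal{A}$, which is what remains to check.

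For scatteredness I would record the general fact that any refinement of a scattered topology is scattered (if $\{s\} = S\cap U$ with $U$ open in the coarser topology, the same $U$ witnesses that $s$ is isolated in $S$ for the finer one) and apply it to $\tau_\infty \supseteq \tau_i$. For the identity $\widehat{\Box a} = \mathit{cd}_{\tau_\infty}(\widehat{a})$, I would first note that $\mathit{cd}$ is monotone under refinement of the topology (a punctured neighbourhood in a coarser topology is still one in a finer topology), which yields $\widehat{\Box a} = \mathit{cd}_{\tau_i}(\widehat{a}) \subseteq \mathit{cd}_{\tau_\infty}(\widehat{a})$ for any fixed $i$. For the reverse inclusion, if $u \in \mathit{cd}_{\tau_\infty}(\widehat{a})$, choose a $\tau_\infty$-open set $V$ with $u \in V$ and $V\setminus\{u\} \subseteq \widehat{a}$; since $\mathcal{B}$ is a basis there is $V' \in \mathcal{B}$, say $V' \in \tau_j$, with $u \in V' \subseteq V$, so $V'\setminus\{u\}$ is a punctured $\tau_j$-neighbourhood of $u$ inside $\widehat{a}$ and hence $u \in \mathit{cd}_{\tau_j}(\widehat{a}) = \widehat{\Box a}$. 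Therefore $\tau_\infty \in P$, Zorn's lemma applies, and we obtain the desired maximal element.

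The main (and essentially only) obstacle is precisely that one cannot simply take the union of the chain, as it may fail to be a topology; passing to the generated topology is harmless because the chain is directed, so every basic open set already lives at some level $\tau_i$, and this directedness is exactly what makes both the scatteredness step and the computation of $\mathit{cd}_{\tau_\infty}$ go through.
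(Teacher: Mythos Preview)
Your proof is correct and follows essentially the same approach as the paper: define $P=\{\sigma\in\mathit{Top}\:\mathcal{A}\mid \tau\subseteq\sigma\}$, show that the topology generated by the union of a chain in $P$ again lies in $P$ (using that this union is a basis and that scatteredness passes to refinements), and apply Zorn's lemma. Your write-up is slightly more explicit, in that you spell out why a maximal element of $P$ is automatically maximal in $\mathit{Top}\:\mathcal{A}$, a point the paper leaves implicit.
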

\begin{proof}
Consider the set $P= \{ \sigma \in \mathit{Top}\: \mathcal{A} \mid \tau \subset \sigma\}$, which is a partially ordered set with respect to inclusion. We claim that any chain in $P$ has an upper bound. 

Assume $C$ is a chain in $P$. Let $\nu$ be the coarsest topology containing $\tau$ and $\bigcup C$. Note that the topology $\nu$ is scattered as an extension of a scattered topology. For any element $a$ of $\mathcal{A}$, we have $\widehat{\Box a} = \mathit{cd}_\tau (\widehat{a})\subset  \mathit{cd}_\nu (\widehat{a})$, because $\nu$ is an extension of $\tau$. 

Now assume $c$ is an arbitrary element of $\mathcal{A}$ and $u\in \mathit{cd}_\nu (\widehat{c})$. We check that $u \in \widehat{\Box c}$. By definition of the co-derived-set operator, there is a punctured $\nu$-neighbourhood $V$ of $u$ such that $V \subset \widehat{c}$.
Since the set $\tau \cup \bigcup C$ is closed under finite intersections, it is a basis of $\nu$. Consequently there is a subset $W$ of $V$ with $ W \cup \{ u\}  \in \tau \cup \bigcup C$. We see that $W\subset \widehat{c}\;$ and $W$ is a punctured neighbourhood of $u$ with respect to a topology $\kappa \in \{\tau\} \cup C \subset \mathit{Top}\: \mathcal{A}$. Hence $u \in \mathit{cd}_\kappa (\widehat{c})= \widehat{\Box c}$.

We obtain that $\widehat{\Box a} = \mathit{cd}_\nu (\widehat{a})$ for any element $a$ of $\mathcal{A}$. Therefore $\nu \in \mathit{Top}\: \mathcal{A}$ and $\nu$ is an upper bound for $C$ in $P$. 

We see that any chain in $P$ has an upper bound. By Zorn's lemma, there is a maximal element in $P$, which is the required maximal extension of $\tau$. 
\end{proof}

The following lemma was inspired by Lemma 4.5 from \cite{Beklemishev2013}.
\begin{lem}\label{maximality property}
Suppose $\mathcal{A}$ is a Magari algebra and $\tau$ is a maximal element of $\mathit{Top}\: \mathcal{A}$. Then, for any $u \in \mathit{Ult}\: \mathcal{A}$ and any $V \in \tau$, we have $V\cup \{ u\}\in \tau$ or there are a $\tau$-open set $W$ and an element $a$ of $\mathcal{A}$ such that $u \in W$, $\Box a \nin u $ and $V\cap W \subset \widehat{a}$.
\end{lem}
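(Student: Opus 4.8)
The plan is to argue via the maximality of $\tau$. Fix $u \in \mathit{Ult}\:\mathcal{A}$ and $V \in \tau$, and suppose the first alternative fails, i.e. $V \cup \{u\} \notin \tau$. Note first that if $u \in V$ then $V \cup \{u\} = V \in \tau$, so we may additionally assume $u \notin V$. Let $\tau'$ be the topology on $\mathit{Ult}\:\mathcal{A}$ generated by the subbasis $\tau \cup \{V \cup \{u\}\}$. Since $\tau$ is closed under finite intersections and $V \in \tau$, a basis for $\tau'$ consists of the $\tau$-open sets together with the sets $(O \cap V) \cup \{u\}$ where $O \in \tau$ and $u \in O$. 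As $V \cup \{u\} \notin \tau$, the topology $\tau'$ strictly extends $\tau$; being an extension of a scattered topology, $\tau'$ is itself scattered (as in the proof of Lemma \ref{maximal extensions lemma}). By maximality of $\tau$ in $\mathit{Top}\:\mathcal{A}$, we must therefore have $\tau' \notin \mathit{Top}\:\mathcal{A}$.

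Next I would locate the defect of $\tau'$. For every $a \in A$ we have $\widehat{\Box a} = \mathit{cd}_\tau(\widehat{a}) \subseteq \mathit{cd}_{\tau'}(\widehat{a})$, since the co-derived-set operator is monotone with respect to refinement of the topology. Hence the only way $\tau'$ can fail to lie in $\mathit{Top}\:\mathcal{A}$ is that $\mathit{cd}_{\tau'}(\widehat{a}) \not\subseteq \widehat{\Box a}$ for some $a$; fix such an $a$ together with a witness $w \in \mathit{cd}_{\tau'}(\widehat{a})$ with $\Box a \notin w$. Unfolding $w \in \mathit{cd}_{\tau'}(\widehat a)$, there is a $\tau'$-open set $N$ with $w \in N$ and $N \setminus \{w\} \subseteq \widehat a$, and hence a basic $\tau'$-open set $B$ with $w \in B \subseteq N$.

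The core of the argument is then a short case analysis on $B$. If $B \in \tau$, then $B \setminus \{w\}$ is a punctured $\tau$-neighbourhood of $w$ contained in $\widehat a$, so $w \in \mathit{cd}_\tau(\widehat a) = \widehat{\Box a}$, contradicting $\Box a \notin w$. Otherwise $B = (O \cap V) \cup \{u\}$ for some $O \in \tau$ with $u \in O$; since $w \in B$, either $w \in O \cap V$ or $w = u$. If $w \in O \cap V$, then $O \cap V$ is a $\tau$-open neighbourhood of $w$ with $(O \cap V) \setminus \{w\} \subseteq B \setminus \{w\} \subseteq \widehat a$, again forcing $w \in \widehat{\Box a}$, a contradiction. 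Therefore $w = u$. Put $W := O$. Then $W \in \tau$, $u \in W$, $\Box a \notin u$, and, using $u \notin V$, we get $V \cap W = O \cap V = B \setminus \{u\} \subseteq N \setminus \{u\} \subseteq \widehat a$, which is precisely the second alternative.

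The only subtle point — and thus the main obstacle — is pinning down the basis of $\tau'$ and observing that the failure of $\tau' \in \mathit{Top}\:\mathcal{A}$ is necessarily a failure of the inclusion $\mathit{cd}_{\tau'}(\widehat a) \subseteq \widehat{\Box a}$, and that such a failure can only be witnessed at the newly adjoined point $u$. The preliminary reduction to the case $u \notin V$ is what makes $B \setminus \{u\}$ equal to $O \cap V$ exactly (rather than $(O \cap V) \setminus \{u\}$), so that the displayed inclusion $V \cap W \subseteq \widehat a$ comes out clean. Everything else is the routine monotonicity of co-derived sets and the inheritance of scatteredness under refinement already invoked above.
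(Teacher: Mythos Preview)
Your proof is correct and follows essentially the same route as the paper's: generate a strictly larger scattered topology by adjoining $V\cup\{u\}$, use maximality to find $a$ with $\mathit{cd}_{\tau'}(\widehat a)\not\subset\widehat{\Box a}$, shrink the witnessing punctured neighbourhood to a basic open set, and rule out the cases where the witness lies in a $\tau$-open piece. The only cosmetic difference is that you front-load the observation $u\notin V$, whereas the paper derives it mid-argument from the failure of the case $h\in V$; both lead to the same clean equality $V\cap W = B\setminus\{u\}$.
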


\begin{proof}
Assume $u \in \mathit{Ult}\: \mathcal{A}$ and $V \in \tau$. It is sufficient to consider the case when $V\cup \{ u\}\nin \tau$. Let $\sigma$ be the coarsest topology containing $\tau$ and the set $V\cup \{ u\}$. The topology $\sigma$ is scattered as an extension of a scattered topology. 
Since $\tau$ is a maximal element of $\mathit{Top}\: \mathcal{A}$, the topology $\sigma$ does not belong to $\mathit{Top}\: \mathcal{A}$ and there exists an element $a$ of $\mathcal{A}$ such that $\widehat{\Box a} \neq \mathit{cd}_\sigma (\widehat{a})$. 
Notice that $\widehat{\Box a}= \mathit{cd}_\tau (\widehat{a})\subset \mathit{cd}_\sigma (\widehat{a})$, because $\tau \subset \sigma$. 
Thus there is an ultrafilter $h $ such that $h \in \mathit{cd}_\sigma (\widehat{a})$ and $h \nin \mathit{cd}_\tau (\widehat{a})=\widehat{\Box a} $. Hence there is a punctured $\sigma$-neighbourhood of $h$ that is included in $\widehat{a}$. In addition, note that $\tau \cup \{W \cap (V \cup \{u\}) \mid W \in \tau\} $ is a basis of $\sigma$. 
We see that $h\in B$ and $B \setminus \{ h\}  \subset \widehat{a}$ for some $B \in \tau \cup \{W \cap (V \cup \{u\}) \mid W \in \tau\}$.
If $B \in \tau$, then $h \in \mathit{cd}_\tau (\widehat{a})$. This is a contradiction with the condition $h \nin \mathit{cd}_\tau (\widehat{a})$. Therefore $B $ has the form $W \cap (V \cup \{u\})$ for some $W\in \tau$. Since $h\in B = W \cap (V \cup \{u\})$, we have $h \in V$ or $h=u$. 
If $h\in V$, then $h \in W \cap V$ and $(W \cap V)\setminus \{ h\} \subset \widehat{a}$. In this case, we obtain $h \in \mathit{cd}_\tau (\widehat{a})$, which is a contradiction. Consequently $h\nin V$ and $h=u$. It follows that $\Box a \nin u$, $u\in W$ and $W \cap V = (W \cap (V \cup \{u\}))\setminus \{ h\} \subset \widehat{a}$.

\end{proof}

For a scattered topological space $(X, \tau)$, the \emph{derivative topology $\tau^+$ on $X$} is defined as the coarsest topology including $\tau$ and $\{ \mathit{d}_\tau (Y) \mid Y \subset X\}$. The next lemma was inspired by Lemma 5.1 from \cite{Beklemishev2013}.
\begin{lem}\label{derived topology of a maximal one}
Suppose $\mathcal{A}=(A, \wedge, \vee, \to, 0, 1, \Box)$ is a Magari algebra and $\tau$ is a maximal element of $\mathit{Top}\: \mathcal{A}$. Then the topology $\tau^+$ is generated by $\tau$ and the sets $  \mathit{d}_\tau (\widehat{a})$ for $a \in A$.
\end{lem}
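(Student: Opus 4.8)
Since $\tau^+$ is the coarsest topology containing $\tau$ and all derived sets $\mathit{d}_\tau(Y)$ with $Y\subseteq\mathit{Ult}\:\mathcal{A}$, the topology generated by $\tau$ together with $\{\mathit{d}_\tau(\widehat a)\mid a\in A\}$, call it $\tau_0$, satisfies $\tau_0\subseteq\tau^+$ trivially. For the reverse inclusion it is enough, since $\tau^+$ is generated by $\tau$ together with all $\mathit{d}_\tau(Y)$, to prove that $\mathit{d}_\tau(Y)\in\tau_0$ for every $Y\subseteq\mathit{Ult}\:\mathcal{A}$. A preliminary observation simplifies matters: as $\widehat{\cdot}$ is a Boolean embedding and $\tau\in\mathit{Top}\:\mathcal{A}$, we get $\mathit{d}_\tau(\widehat a)=\mathit{Ult}\:\mathcal{A}\setminus\mathit{cd}_\tau(\widehat{\neg a})=\mathit{Ult}\:\mathcal{A}\setminus\widehat{\Box\neg a}=\widehat{\Diamond a}$ for every $a\in A$; hence $\tau_0$ has as a basis the sets $U\cap\widehat{\Diamond a_1}\cap\dots\cap\widehat{\Diamond a_n}$ with $U\in\tau$ and $a_1,\dots,a_n\in A$.

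Now fix $Y$ and a point $x\in\mathit{d}_\tau(Y)$; since replacing $Y$ by $Y\setminus\{x\}$ changes neither $\mathit{d}_\tau(Y)$ nor the goal, assume $x\notin Y$. The key reduction is that it suffices to find a $\tau$-open $U\ni x$ and an $a\in A$ with $x\in\widehat{\Diamond a}$ and $U\cap\widehat a\subseteq Y$. Granting this, $U\cap\widehat{\Diamond a}$ is a $\tau_0$-neighbourhood of $x$ and $U\cap\widehat{\Diamond a}\subseteq\mathit{d}_\tau(Y)$: for $z\in U\cap\widehat{\Diamond a}$ and any $\tau$-open $N\ni z$, the set $N\cap U$ is a $\tau$-neighbourhood of $z$, and since $z\in\widehat{\Diamond a}=\mathit{d}_\tau(\widehat a)$ it contains a point $p\neq z$ of $\widehat a$; then $p\in U\cap\widehat a\subseteq Y$ and $p\in N\setminus\{z\}$, so $z\in\mathit{d}_\tau(Y)$. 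Letting $x$ range over $\mathit{d}_\tau(Y)$ then gives $\mathit{d}_\tau(Y)\in\tau_0$, hence the lemma.

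The remaining task — constructing $U$ and $a$ from the hypothesis $x\in\mathit{d}_\tau(Y)$ — is the heart of the matter, and this is where maximality of $\tau$ enters, following Lemma~5.1 of \cite{Beklemishev2013}. The engine is Lemma \ref{maximality property}: for a $\tau$-open $V$, either $V\cup\{x\}\in\tau$, or there are a $\tau$-open $W\ni x$ and a $b\in A$ with $\Box b\notin x$ and $V\cap W\subseteq\widehat b$, where $\Box b\notin x$ rewrites as $x\in\widehat{\Diamond\neg b}$, the form we want. The plan is to apply this with $V$ running over a suitable family of $\tau$-open sets that approximate $Y$ near $x$ — punctured $\tau$-neighbourhoods of $x$ exist because the scattered space $(\mathit{Ult}\:\mathcal{A},\tau)$ is $T_d$ by Proposition \ref{T_d-prop} — and to assemble from the clopen pieces $\widehat b$ it returns a single $\widehat a$ together with a $\tau$-neighbourhood $U$ of $x$ on which $\widehat a$ is squeezed inside $Y$ while $x$ is still a $\tau$-limit point of $\widehat a$, i.e.\ $x\in\widehat{\Diamond a}$; throughout, topological conditions are converted into algebraic ones via $\mathit{cd}_\tau(\widehat c)=\widehat{\Box c}$.

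I expect this assembly step to be the main obstacle: one must make the single chosen $a$ satisfy the two requirements $x\in\widehat{\Diamond a}$ and $U\cap\widehat a\subseteq Y$ simultaneously, and control how far the separation step has to be iterated, presumably by transfinite recursion along the Cantor--Bendixson stratification of $(\mathit{Ult}\:\mathcal{A},\tau)$ (equivalently, along the ordinal heights $\mathit{ht}$ studied earlier in this section). Once this is done, everything else — the inclusions $\tau_0\subseteq\tau^+$ and $\tau^+\subseteq\tau_0$, and the manipulations of $\mathit{cd}_\tau$, $\mathit{d}_\tau$ and the Stone embedding $\widehat{\cdot}$ — is routine.
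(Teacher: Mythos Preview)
Your setup is right, and you have correctly isolated Lemma~\ref{maximality property} as the engine; but the ``assembly step'' you flag as the main obstacle is a phantom. No iteration or transfinite recursion is needed: a \emph{single} application of Lemma~\ref{maximality property} with the right choice of $V$ finishes the job. Take $V=\mathit{int}_\tau(\mathit{Ult}\,\mathcal{A}\setminus Y)$. Since $x\in\mathit{d}_\tau(Y)$ means $x\notin\mathit{cd}_\tau(\mathit{Ult}\,\mathcal{A}\setminus Y)=\mathit{cd}_\tau(V)$ (the last equality using that $\tau$ is scattered, hence $T_d$), the first alternative $V\cup\{x\}\in\tau$ is impossible: it would make $V$ a punctured $\tau$-neighbourhood of $x$ inside $\mathit{Ult}\,\mathcal{A}\setminus Y$. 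So Lemma~\ref{maximality property} hands you $W\in\tau$ with $x\in W$, and $b\in A$ with $\Box b\notin x$ and $V\cap W\subseteq\widehat b$. Setting $U=W$ and $a=\neg b$ gives $x\in\widehat{\Diamond a}$ and $U\cap\widehat a\subseteq\mathit{Ult}\,\mathcal{A}\setminus V=\mathit{cl}_\tau(Y)$.

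This is slightly weaker than your target $U\cap\widehat a\subseteq Y$, and that over-strong target is exactly why you thought iteration was needed. But your own verification that $U\cap\widehat{\Diamond a}\subseteq\mathit{d}_\tau(Y)$ goes through verbatim with $Y$ replaced by $\mathit{cl}_\tau(Y)$, landing you in $\mathit{d}_\tau(\mathit{cl}_\tau(Y))=\mathit{d}_\tau(Y)$ (again by scatteredness/$T_d$). Alternatively, as the paper does, one computes directly $\mathit{cd}_\tau(\mathit{Ult}\,\mathcal{A}\setminus Y)\cap W\subseteq\mathit{cd}_\tau(V\cap W)\subseteq\mathit{cd}_\tau(\widehat b)=\widehat{\Box b}$, whence $W\cap\widehat{\Diamond a}=W\setminus\widehat{\Box b}\subseteq W\cap\mathit{d}_\tau(Y)$. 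Either way, the proof is a one-shot application of maximality, not a transfinite construction.
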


\begin{proof}
Assume $\tau$ is a maximal element of $\mathit{Top}\: \mathcal{A}$. Let $\tau^\prime$ be the topology generated by $\tau$ and the sets $ \mathit{d}_\tau (\widehat{a})$ for $a \in A$. It is clear that $\tau^\prime \subset \tau^+$. We prove the converse. We shall check that $ \mathit{d}_\tau (Y) $ is $\tau^\prime$-open for any $Y \subset \mathit{Ult}\: \mathcal{A}$.

Consider any $Y \subset \mathit{Ult}\: \mathcal{A}$ and any $u \in   \mathit{d}_\tau (Y)$. We claim that there is a $\tau^\prime$-neighbourhood of $u$ entirely contained in $\mathit{d}_\tau (Y)$. For any $X \subset \mathit{Ult}\: \mathcal{A}$, we denote the $\tau$-interior of $X$ by $\mathit{int}_\tau (X)$. Since $\tau$ is scattered, $\mathit{cd}_\tau (X)= \mathit{cd}_\tau (\mathit{int}_\tau (X))$. Put $X = \mathit{Ult}\: \mathcal{A} \setminus Y$. Since $u \in \mathit{d}_\tau (Y)$ and $u \nin \mathit{cd}_\tau (X)= \mathit{cd}_\tau (\mathit{int}_\tau (X))$, the set $\{u\} \cup \mathit{int}_\tau(X)  \nin \tau$. By Lemma \ref{maximality property}, there are a $\tau$-open set $W$ and an element $c$ of $ A$ such that $u \in W$, $\Box c \nin u $ and $\mathit{int}_\tau(X)\cap W \subset \widehat{c}$. It follows that 
\[u \in W \cap (\mathit{Ult}\: \mathcal{A} \setminus \widehat{\Box  c}) = W \cap  \mathit{d}_\tau(\widehat{\neg  c}) \in \tau^\prime.\] 
Thus $W \cap (\mathit{Ult}\: \mathcal{A} \setminus \widehat{\Box  c})$ is a $\tau^\prime$-neighbourhood of $u$. It remains to show that 
\[W \cap (\mathit{Ult}\: \mathcal{A} \setminus \widehat{\Box  c}) \subset \mathit{d}_\tau(Y).\]
Indeed, we have
\begin{multline}\label{ineq1}
\mathit{cd}_\tau(X) \cap W \subset \mathit{cd}_\tau( \mathit{int}_\tau(X)) \cap \mathit{cd}_\tau(W)=\\
=\mathit{cd}_\tau( \mathit{int}_\tau(X) \cap W) \subset \mathit{cd}_\tau( \widehat{ c})=  \widehat{\Box  c},
\end{multline} 
because $W$ is a $\tau$-open set and $\mathit{int}_\tau(X)\cap W \subset \widehat{c}$.
Hence,
\begin{align*}
W \cap (\mathit{Ult}\: \mathcal{A} \setminus \widehat{\Box  c}) &\subset W \cap (\mathit{Ult}\: \mathcal{A} \setminus (\mathit{cd}_\tau(X) \cap W)) \;\;\text{ (from (\ref{ineq1}))}  \\
&= W \cap ((\mathit{Ult}\: \mathcal{A} \setminus \mathit{cd}_\tau(X)) \cup ( \mathit{Ult}\: \mathcal{A} \setminus W))\\
&= W \cap ( \mathit{d}_\tau(Y) \cup ( \mathit{Ult}\: \mathcal{A} \setminus W)) \\
&= (W \cap  \mathit{d}_\tau(Y))\cup (W \cap (\mathit{Ult}\: \mathcal{A} \setminus W))\\
&= W \cap  \mathit{d}_\tau(Y)\\
&\subset\mathit{d}_\tau(Y).
\end{align*} 

This argument shows that any element of $\mathit{d}_\tau(Y)$ belongs to this set together with a $\tau^\prime$-neighbourhood. We conclude that $ \mathit{d}_\tau (Y) $ is $\tau^\prime$-open and $\tau^\prime = \tau^+$.

\end{proof}

\section{Neighbourhood completeness}
\label{s5}
In this section, we prove neighbourhood completeness of $\mathsf{GLP}$ extended with infinitary derivations. We also show that any $\Box$-founded $\mathsf{GLP}$-algebra can be embedded into the powerset algebra of a $\mathsf{GLP}$-frame. 

Analogously to the case of Magari algebras, by $\mathit{Ult}\: \mathcal{A}$, we denote the set of ultrafilters of a $\mathsf{GLP}$-algebra $\mathcal{A}$.
For a $\mathsf{GLP}$-algebra $\mathcal{A}= (A, \wedge, \vee, \to, 0, 1, \Box_0, \Box_1, \dotsc)$, we denote the Magari algebra $(A, \wedge, \vee, \to, 0, 1, \Box_i)$ by $\mathcal{A}_i$. 
We see $\mathit{Ult}\: \mathcal{A}=\mathit{Ult}\: \mathcal{A}_i$ for any $i \in \mathbb{N}$. 
We call (maximal with respect to inclusion) elements of $\mathit{Top}\: \mathcal{A}_i$ \emph{(maximal) $i$-topologies} on $\mathit{Ult}\: \mathcal{A}$.

\begin{lem}\label{recursive step}
For any $\mathsf{GLP}$-algebra $\mathcal{A}$ and any maximal $i$-topology $\tau$ on $\mathit{Ult}\: \mathcal{A}$, there exists a maximal $(i+1)$-topology $\nu$ on $\mathit{Ult}\: \mathcal{A}$ such that $\tau \subset \nu$ and $\mathit{d}_{\tau}(Y)$ is $\nu$-open for each $Y\subset \mathit{Ult}\: \mathcal{A}$.
\end{lem}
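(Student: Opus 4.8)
\emph{Proof idea.} The plan is to produce $\nu$ as a maximal extension of the derivative topology $\tau^+$ of $\tau$. Since $\tau$ is a maximal element of $\mathit{Top}\:\mathcal{A}_i$, Lemma~\ref{derived topology of a maximal one} applies and shows that $\tau^+$ is generated by $\tau$ together with the sets $\mathit{d}_\tau(\widehat a)$ for $a\in A$. Note that $\mathit{d}_\tau(\widehat a)=\mathit{Ult}\:\mathcal{A}\setminus\mathit{cd}_\tau(\widehat{\neg a})=\mathit{Ult}\:\mathcal{A}\setminus\widehat{\Box_i\neg a}=\widehat{\Diamond_i a}$, so a base of $\tau^+$ consists of the sets $W\cap\widehat{\Diamond_i c_1}\cap\dots\cap\widehat{\Diamond_i c_k}$ with $W\in\tau$ and $c_1,\dots,c_k\in A$ (with $k\geqslant 0$).

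The heart of the argument is the claim that $\tau^+\in\mathit{Top}\:\mathcal{A}_{i+1}$, i.e.\ that $\tau^+$ is scattered and $\widehat{\Box_{i+1}a}=\mathit{cd}_{\tau^+}(\widehat a)$ for every $a\in A$. Scatteredness is immediate because $\tau^+$ extends the scattered topology $\tau$. For the inclusion $\widehat{\Box_{i+1}a}\subseteq\mathit{cd}_{\tau^+}(\widehat a)$ I would distinguish, for an ultrafilter $u$ with $\Box_{i+1}a\in u$, the case $\Box_i a\in u$ (then $u\in\widehat{\Box_i a}=\mathit{cd}_\tau(\widehat a)\subseteq\mathit{cd}_{\tau^+}(\widehat a)$ since $\tau\subseteq\tau^+$) from the case $\Box_i a\notin u$, in which a punctured $\tau^+$-neighbourhood of $u$ contained in $\widehat a$ is assembled from a $\tau$-neighbourhood and a suitable set $\widehat{\Diamond_i c}$, using $\mathsf{GLP}$-axioms (iv) and (v) and the transitivity of $\Box_i$.

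The reverse inclusion $\mathit{cd}_{\tau^+}(\widehat a)\subseteq\widehat{\Box_{i+1}a}$ is, I expect, the main obstacle. Given $u\in\mathit{cd}_{\tau^+}(\widehat a)$, one fixes a basic punctured $\tau^+$-neighbourhood $W\cap\widehat{\Diamond_i c_1}\cap\dots\cap\widehat{\Diamond_i c_k}$ of $u$ contained in $\widehat a\cup\{u\}$. First one simplifies the finite intersection of the sets $\widehat{\Diamond_i c_j}$, using transitivity of $\Box_i$ (so that, e.g., $\widehat{\Diamond_i(c\wedge\Diamond_i c')}\subseteq\widehat{\Diamond_i c}\cap\widehat{\Diamond_i c'}$) and $\mathsf{GLP}$-axiom~(iv); then, assuming for contradiction that $\Box_{i+1}a\notin u$, one invokes the maximality of $\tau$ through Lemma~\ref{maximality property} to extract a $\tau$-open set and an algebra element exhibiting a point of the neighbourhood lying outside $\widehat a$, contradicting the choice of the neighbourhood. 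This establishes $\widehat{\Box_{i+1}a}=\mathit{cd}_{\tau^+}(\widehat a)$ and hence $\tau^+\in\mathit{Top}\:\mathcal{A}_{i+1}$.

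Finally, by Lemma~\ref{maximal extensions lemma} applied to the Magari algebra $\mathcal{A}_{i+1}$, there is a maximal element $\nu$ of $\mathit{Top}\:\mathcal{A}_{i+1}$ with $\tau^+\subseteq\nu$; this $\nu$ is a maximal $(i+1)$-topology on $\mathit{Ult}\:\mathcal{A}$. Since $\tau\subseteq\tau^+\subseteq\nu$ and, by the very definition of the derivative topology, $\mathit{d}_\tau(Y)\in\tau^+\subseteq\nu$ for every $Y\subseteq\mathit{Ult}\:\mathcal{A}$, the topology $\nu$ has all the required properties.
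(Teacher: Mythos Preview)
Your plan coincides with the paper's in its final step (extend to a maximal $(i+1)$-topology via Lemma~\ref{maximal extensions lemma}) and in the use of Lemma~\ref{derived topology of a maximal one} to describe a basis of $\tau^+$. The divergence is that you try to show $\tau^+\in\mathit{Top}\:\mathcal{A}_{i+1}$ directly, whereas the paper first enlarges $\tau^+$ to a topology $\tau'$ generated by $\tau^+$ together with all sets of the form $\{u\}\cup\widehat{\boxdot_{i+1}a}$ (for $\Box_{i+1}a\in u$), and then proves $\tau'\in\mathit{Top}\:\mathcal{A}_{i+1}$.

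The gap in your argument is precisely the inclusion $\widehat{\Box_{i+1}a}\subseteq\mathit{cd}_{\tau^+}(\widehat a)$ in the case $\Box_i a\notin u$. You say a punctured $\tau^+$-neighbourhood of $u$ inside $\widehat a$ ``is assembled from a $\tau$-neighbourhood and a suitable set $\widehat{\Diamond_i c}$'', but you give no $c$. Unpacking the requirement: a basic punctured $\tau^+$-neighbourhood $(W\cap\bigcap_j\widehat{\Diamond_i c_j})\setminus\{u\}$ lies in $\widehat a$ iff $W\setminus\{u\}\subseteq\widehat{\,a\vee\bigvee_j\Box_i\neg c_j\,}$, which (since $\tau\in\mathit{Top}\:\mathcal{A}_i$) forces $\Box_i\bigl(a\vee\bigvee_j\Box_i\neg c_j\bigr)\in u$. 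So you would need, from $\Box_{i+1}a\in u$ alone, elements $c_1,\dots,c_k$ with $\Diamond_i c_j\in u$ and $\Box_i\bigl(a\vee\bigvee_j\Box_i\neg c_j\bigr)\in u$. Axioms~(iv), (v) and transitivity do yield the converse implication (so your ``hard'' direction $\mathit{cd}_{\tau^+}(\widehat a)\subseteq\widehat{\Box_{i+1}a}$ actually goes through without Lemma~\ref{maximality property}), but they do not produce such $c_j$'s from $\Box_{i+1}a$; nothing in the $\mathsf{GLP}$ identities expresses $\Box_{i+1}a$ as a join of terms of that shape. This is exactly why the paper adds the sets $\{u\}\cup\widehat{\boxdot_{i+1}a}$: with them, $\widehat{\boxdot_{i+1}a}$ is a punctured $\tau'$-neighbourhood of any $u\in\widehat{\Box_{i+1}a}$ and the forward inclusion becomes trivial. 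The price is that the reverse inclusion for $\tau'$ now has more basic neighbourhoods to handle, and the paper discharges it by a direct $\mathsf{GLP}$-algebra computation (not via Lemma~\ref{maximality property}).

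In short, you have the difficulty of the two inclusions inverted: for $\tau^+$ the reverse inclusion is routine, while the forward inclusion is the real obstacle and is not covered by your sketch. Enlarging $\tau^+$ as the paper does is the missing idea.
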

\begin{proof}
Assume we have a $\mathsf{GLP}$-algebra $\mathcal{A}$ and a maximal $i$-topology $\tau$ on $\mathit{Ult}\: \mathcal{A}$. Consider the coarsest topology $\tau^\prime$ containing $\tau^+$ and all sets of the form $ \{u\} \cup \widehat{\boxdot_{i+1} a}$, where $  u \in  \mathit{Ult}\: \mathcal{A}$, $\Box_{i+1} a \in u $ and $\boxdot_{i+1} a = a \wedge \Box_{i+1} a$. We see that 
$\tau \subset \tau^\prime$ and $\mathit{d}_{\tau}(Y)$ is $\tau^\prime$-open for each $Y\subset \mathit{Ult}\: \mathcal{A}$. Trivially, the topology $\tau^\prime$ is scattered as an extension of a scattered topology. We claim that $\tau^\prime \in \mathit{Top}\: \mathcal{A}_{i+1}$. 

We shall show that $\widehat{\Box_{i+1} a} = \mathit{cd}_{\tau^\prime} (\widehat{a})$ for any element $a$ of $\mathcal{A}$. First, we check that  $\widehat{\Box_{i+1} a} \subset \mathit{cd}_{\tau^\prime} (\widehat{a})$. For any ultrafilter $d$, if $d \in \widehat{\Box_{i+1} a}$, then $\widehat{\boxdot_{i+1} a}$ is a punctured $\tau^\prime$-neighbourhood of $d$. Also, $\widehat{\boxdot_{i+1} a} \subset \widehat{a}$. By definition of the co-derived-set operator, $d \in  \mathit{cd}_{\tau^\prime} (\widehat{a})$. Consequently $\widehat{\Box_{i+1} a} \subset \mathit{cd}_{\tau^\prime} (\widehat{a})$.

Now we check that $\mathit{cd}_{\tau^\prime} (\widehat{a}) \subset\widehat{\Box_{i+1} a}$. Consider any ultrafilter $d$ such that $d \nin \widehat{\Box_{i+1} a}$. In addition, let $W$ be an arbitrary punctured $\tau^\prime$- neighbourhood of $d$. It is sufficient to show that $W$ is not included in $\widehat{a}$. 

We have $\Box_{i+1} a \nin d$, $d \nin W$ and $W \cup \{d\} \in \tau^\prime$. From Lemma \ref{derived topology of a maximal one}, there is a basis of $\tau^\prime$ consisting of alls sets of the form 
\[V \cap \mathit{d}_{\tau} (\widehat{b_1}) \cap \dotsb \cap \mathit{d}_{\tau} (\widehat{b_n}) \cap  ( \{u_1\} \cup \widehat{\boxdot_{i+1} c_1}) \cap \dotsb   \cap  ( \{u_m\} \cup \widehat{\boxdot_{i+1} c_m}),\]
where $V \in \tau$, $\{ b_1, \dotsc, b_n\} $ and $\{ c_1, \dotsc, c_m\} $ are (possibly empty) subsets of $A$, $\{ u_1, \dotsc, u_m\} $ is a subset of $\mathit{Ult}\: \mathcal{A}$. In addition, $\Box_{i+1} c_k \in u_k$ for $k \in \{ 1, \dotsc, m\} $. Hence we have
\[d \in \left(V \cap \mathit{d}_{\tau} (\widehat{b_1}) \cap \dotsb \cap \mathit{d}_{\tau} (\widehat{b_n}) \cap  ( \{u_1\} \cup \widehat{\boxdot_{i+1} c_1}) \cap \dotsb   \cap  ( \{u_m\} \cup \widehat{\boxdot_{i+1} c_m}) \right)\subset W \cup \{d\}\]  
for some element of the basis of $\tau^\prime$.
We see that the ultrafilter $d$ contains $\Diamond_i b_1, \dotsc , \Diamond_i b_n$ and  $\Box_{i+1} c_1, \dotsc , \Box_{i+1} c_m$. Also, $\Diamond_{i+1} \neg a \in d$. 
In any $\mathsf{GLP}$-algebra, we have
\begin{gather*}
\bigwedge \{\Diamond_i b_1, \dotsc , \Diamond_i b_n\} \leqslant \Box_{i+1} \bigwedge \{\Diamond_i b_1, \dotsc , \Diamond_i b_n\},\\
\bigwedge \{\Box_{i+1} c_1, \dotsc , \Box_{i+1} c_m \} \leqslant \Box_{i+1} \bigwedge \{\boxdot_{i+1} c_1, \dotsc , \boxdot_{i+1} c_m \}. 
\end{gather*}
Further, we have
\begin{multline*}
(\Diamond_{i+1} \neg a) \wedge \bigwedge \{\Diamond_i b_1, \dotsc , \Diamond_i b_n, \Box_{i+1} c_1, \dotsc , \Box_{i+1} c_m \} \leqslant \\ \leqslant
(\Diamond_{i+1} \neg a) \wedge \Box_{i+1} \bigwedge \{\Diamond_i b_1, \dotsc , \Diamond_i b_n\} \wedge \Box_{i+1} \bigwedge \{\boxdot_{i+1} c_1, \dotsc , \boxdot_{i+1} c_m \} \leqslant\\
\leqslant \Diamond_{i+1} \left((\neg a) \wedge \bigwedge \{\Diamond_i b_1, \dotsc , \Diamond_i b_n, \boxdot_{i+1} c_1, \dotsc , \boxdot_{i+1} c_m \} \right) \leqslant\\
\leqslant \Diamond_{i} \left((\neg a) \wedge \bigwedge \{\Diamond_i b_1, \dotsc , \Diamond_i b_n, \boxdot_{i+1} c_1, \dotsc , \boxdot_{i+1} c_m \} \right)
\end{multline*}
We obtain $\Diamond_{i} \left((\neg a) \wedge \bigwedge \{\Diamond_i b_1, \dotsc , \Diamond_i b_n, \boxdot_{i+1} c_1, \dotsc , \boxdot_{i+1} c_m \} \right) \in d$ and
\[d \in \mathit{d}_{\tau}  \left( \widehat{\neg a}\cap \mathit{d}_{\tau} (\widehat{b_1}) \cap \dotsb \cap \mathit{d}_{\tau} (\widehat{b_n}) \cap \widehat{\boxdot_{i+1} c_1} \cap \dotsb   \cap  \widehat{\boxdot_{i+1} c_m}\right).\]
Since $V$ is a $\tau$-neighbourhood of $d$, there exists an ultrafilter $w$ such that
\[w \in (V \setminus \{d\}) \cap \widehat{\neg a}\cap \mathit{d}_{\tau} (\widehat{b_1}) \cap \dotsb \cap \mathit{d}_{\tau} (\widehat{b_n}) \cap \widehat{\boxdot_{i+1} c_1} \cap \dotsb   \cap  \widehat{\boxdot_{i+1} c_m}\subset W.\]
Consequently $w$ is an element of $W$, which does not belong to $\widehat{a}$.

We obtain that none of the punctured $\tau^\prime$-neighbourhoods of $d$ are included in $\widehat{a}$. In other words, $d \nin \mathit{cd}_{\tau^\prime} (\widehat{a})$ for any $d \nin \widehat{\Box_{i+1} a}$. This argument shows that $\mathit{cd}_{\tau^\prime} (\widehat{a})\subset \widehat{\Box_{i+1} a} $. Hence $\widehat{\Box_{i+1} a} = \mathit{cd}_{\tau^\prime} (\widehat{a})$. We see $\tau^\prime \in \mathit{Top}\: \mathcal{A}_{i+1}$.

Now we extend the topology $\tau^\prime$ applying Lemma \ref{maximal extensions lemma} and obtain the required maximal $(i+1)$-topology $\nu$ on $\mathit{Ult}\: \mathcal{A}$.

\end{proof}

\begin{lem}\label{important lemma 2}
Suppose $\mathcal{A}= (A, \wedge, \vee, \to, 0, 1, \Box_0, \Box_1, \dotsc)$ is a $\Box$-founded $\mathsf{GLP}$-algebra and $F$ is a filter of $\mathcal{A}$ such that $F\subset \Box^{-1}_0 1$, where $\Box^{-1}_0 1= \{a\in A \mid \Box_0 a =1\}$. Then there exists a series of topologies $\tau_0, \tau_1, \dotsc$ on $\mathit{Ult}\: \mathcal{A}$ such that $(\mathit{Ult}\: \mathcal{A}, \tau_0, \tau_1, \dotsc)$ is a $\mathsf{GLP}$-space and $\tau_i \in \mathit{Top}\: \mathcal{A}_i$ for every $i\in \mathbb{N}$. Moreover, $\mathit{cd}_{\tau_0} (\bigcap \{\widehat{a}\mid a\in F\})= \mathit{Ult}\: \mathcal{A}$.
\end{lem}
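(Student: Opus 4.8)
The plan is to build the sequence $\tau_0, \tau_1, \dots$ recursively, obtaining $\tau_0$ from Lemma~\ref{important lemma} and~\ref{maximal extensions lemma} and then climbing up one level at a time with Lemma~\ref{recursive step}. Everything substantial is already contained in those three lemmas, so the proof is mostly an assembly argument together with a verification that the resulting multitopological space satisfies the definition of a $\mathsf{GLP}$-space.

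For the base step I would note that, since $\mathcal{A}$ is a $\Box$-founded $\mathsf{GLP}$-algebra, its reduct $\mathcal{A}_0$ is a $\Box$-founded Magari algebra, and the hypothesis $F \subseteq \Box_0^{-1} 1$ is precisely what is needed to apply Lemma~\ref{important lemma} to $\mathcal{A}_0$ and $F$. This produces a scattered topology $\sigma$ on $\mathit{Ult}\: \mathcal{A}$ with $\widehat{\Box_0 a} = \mathit{cd}_\sigma(\widehat{a})$ for every $a \in A$ (so $\sigma \in \mathit{Top}\: \mathcal{A}_0$) and with $\mathit{cd}_\sigma\bigl(\bigcap\{\widehat{a} \mid a \in F\}\bigr) = \mathit{Ult}\: \mathcal{A}$. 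I would then apply Lemma~\ref{maximal extensions lemma} to extend $\sigma$ to a maximal element $\tau_0$ of $\mathit{Top}\: \mathcal{A}_0$. The one point requiring a remark here is that the ``moreover'' conclusion survives the extension: any punctured $\sigma$-neighbourhood is also a punctured $\tau_0$-neighbourhood, so $\mathit{cd}$ is monotone under refinement of the topology, whence $\mathit{cd}_{\tau_0}\bigl(\bigcap\{\widehat{a} \mid a \in F\}\bigr) \supseteq \mathit{cd}_\sigma\bigl(\bigcap\{\widehat{a} \mid a \in F\}\bigr) = \mathit{Ult}\: \mathcal{A}$.

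For the recursive step, given a maximal $i$-topology $\tau_i$ on $\mathit{Ult}\: \mathcal{A}$, I would invoke Lemma~\ref{recursive step} to obtain a maximal $(i+1)$-topology $\tau_{i+1}$ with $\tau_i \subseteq \tau_{i+1}$ and with $\mathit{d}_{\tau_i}(Y)$ being $\tau_{i+1}$-open for every $Y \subseteq \mathit{Ult}\: \mathcal{A}$. Iterating this yields the whole sequence $(\tau_i)_{i \in \mathbb{N}}$, with each $\tau_i$ maximal in $\mathit{Top}\: \mathcal{A}_i$, in particular $\tau_i \in \mathit{Top}\: \mathcal{A}_i$ as required.

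It then remains to check that $(\mathit{Ult}\: \mathcal{A}, \tau_0, \tau_1, \dots)$ is a $\mathsf{GLP}$-space, i.e. that for each $i$ the topology $\tau_i$ is scattered, $\tau_i \subseteq \tau_{i+1}$, and $\mathit{d}_{\tau_i}(V) \in \tau_{i+1}$ for all $V$. Scatteredness holds because $\tau_i \in \mathit{Top}\: \mathcal{A}_i$ by construction, and the other two conditions are exactly the conclusions of Lemma~\ref{recursive step}; combined with the ``moreover'' clause established in the base step, this finishes the proof. I do not expect a genuine obstacle here: the delicate work lives in Lemmas~\ref{important lemma} and~\ref{recursive step}, and the only things to be careful about are the monotonicity remark that transports the $F$-condition from $\sigma$ to its maximal extension $\tau_0$, and the fact that the recursion is legitimately seeded — which is guaranteed precisely because Lemma~\ref{important lemma} shows $\mathit{Top}\: \mathcal{A}_0$ is nonempty.
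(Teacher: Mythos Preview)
Your proposal is correct and follows essentially the same route as the paper: obtain a $0$-topology from Lemma~\ref{important lemma}, extend it to a maximal one via Lemma~\ref{maximal extensions lemma}, then iterate Lemma~\ref{recursive step}, finally noting that the ``moreover'' clause is preserved because $\mathit{cd}$ is monotone under refinement. You are in fact slightly more explicit than the paper in verifying the $\mathsf{GLP}$-space conditions and the monotonicity step.
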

\begin{proof}
From Lemma \ref{important lemma}, there exists a topology $\tau \in  \mathit{Top}\: \mathcal{A}_0$ such that $\mathit{cd}_{\tau} (\bigcap \{\widehat{a}\mid a\in F\})= \mathit{Ult}\: \mathcal{A}$. By Lemma \ref{maximal extensions lemma}, the topology $\tau$ can be extended to a maximal 0-topology $\tau_0$. Applying Lemma \ref{recursive step}, we obtain a series of topologies $\tau_1, \tau_2, \dotsc$ on $\mathit{Ult}\: \mathcal{A}$ such that $(\mathit{Ult}\: \mathcal{A}, \tau_0, \tau_1, \dotsc)$ is a $\mathsf{GLP}$-space and $\tau_i \in \mathit{Top}\: \mathcal{A}_i$ for any $i\in \mathbb{N}$. Since $\tau\subset \tau_0$, we also have $\mathit{cd}_{\tau_0} (\bigcap \{\widehat{a}\mid a\in F\})= \mathit{Ult}\: \mathcal{A}$.
\end{proof}

The following theorem is analogous to Theorem \ref{representation of Magari algebras} and is obtained by a similar argument. So we omit the proof.
\begin{thm}
A $\mathsf{GLP}$-algebra is $\Box$-founded if and only if it is embeddable into the powerset $\mathsf{GLP}$-algebra of a $\mathsf{GLP}$-frame.
\end{thm}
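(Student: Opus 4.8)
The plan is to reproduce, step for step, the proof of Theorem~\ref{representation of Magari algebras}, substituting the single-modality ingredients by their $\mathsf{GLP}$-analogues; in particular Lemma~\ref{important lemma 2} will play the role that Lemma~\ref{important lemma} played there, with the filter $F=\{1\}$.

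For the ``if'' direction, I would argue as follows. Suppose a $\mathsf{GLP}$-algebra $\mathcal{A}$ is isomorphic to a subalgebra of the powerset $\mathsf{GLP}$-algebra of a $\mathsf{GLP}$-frame $\mathcal{X}$. The powerset $\mathsf{GLP}$-algebra of $\mathcal{X}$ has a complete, hence $\sigma$-complete, Boolean reduct, so it is $\sigma$-complete; by Proposition~\ref{from sigma-completness to box-foundness} its $0$-th Magari reduct is $\Box$-founded, i.e.\ it is a $\Box$-founded $\mathsf{GLP}$-algebra. Since $\Box$-foundedness of a $\mathsf{GLP}$-algebra is by definition $\Box$-foundedness of its $0$-th Magari reduct, and every subalgebra of a $\Box$-founded Magari algebra is $\Box$-founded, we conclude that $\mathcal{A}$ is $\Box$-founded.

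For the ``only if'' direction, suppose $\mathcal{A}=(A,\wedge,\vee,\to,0,1,\Box_0,\Box_1,\dotsc)$ is a $\Box$-founded $\mathsf{GLP}$-algebra. Taking the filter $F=\{1\}$, which satisfies $F\subset\Box^{-1}_0 1$ trivially, Lemma~\ref{important lemma 2} yields a series of topologies $\tau_0,\tau_1,\dotsc$ on $\mathit{Ult}\: \mathcal{A}$ such that $(\mathit{Ult}\: \mathcal{A},\tau_0,\tau_1,\dotsc)$ is a $\mathsf{GLP}$-space and $\tau_i\in\mathit{Top}\: \mathcal{A}_i$ for every $i\in\mathbb{N}$. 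By Proposition~\ref{GLP-spaces=GLP-frames}, the tuple $\mathcal{X}=(\mathit{Ult}\: \mathcal{A},\mathit{cd}_{\tau_0},\mathit{cd}_{\tau_1},\dotsc)$ is then a $\mathsf{GLP}$-frame. It remains to observe that the Stone map $\widehat{\cdot}\colon a\mapsto\widehat{a}$ is the desired embedding: it is an injective Boolean homomorphism from the Boolean reduct of $\mathcal{A}$ into $\mathcal{P}(\mathit{Ult}\: \mathcal{A})$ by Stone's representation theorem, and the membership $\tau_i\in\mathit{Top}\: \mathcal{A}_i$ gives $\widehat{\Box_i a}=\mathit{cd}_{\tau_i}(\widehat{a})$ for all $a\in A$ and all $i$, so $\widehat{\cdot}$ respects every modal operation. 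Hence $\widehat{\cdot}$ embeds $\mathcal{A}$ into the powerset $\mathsf{GLP}$-algebra of $\mathcal{X}$. The whole argument is essentially bookkeeping on top of the lemmas of Sections~\ref{s4} and~\ref{s5}; the only point needing attention is that a \emph{single} map $\widehat{\cdot}$ must commute with \emph{all} the modalities at once, which is exactly what the simultaneous condition ``$\tau_i\in\mathit{Top}\: \mathcal{A}_i$ for every $i$'' delivered by Lemma~\ref{important lemma 2} secures — so I anticipate no real obstacle here, which is presumably why the authors omit the proof.
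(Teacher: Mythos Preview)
Your proposal is correct and is exactly the argument the paper has in mind: it explicitly says the theorem ``is analogous to Theorem~\ref{representation of Magari algebras} and is obtained by a similar argument,'' and you have carried out precisely that analogy, replacing Lemma~\ref{important lemma} by Lemma~\ref{important lemma 2} (with $F=\{1\}$) and Proposition~\ref{from scattered spaces to Esakia frames} by Proposition~\ref{GLP-spaces=GLP-frames}.
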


Now we establish neighbourhood completeness of $\mathsf{GLP}$ extended with infinitary derivations.

\begin{thm}[neighbourhood completeness]\label{neighbourhood completeness}
For any sets of formulas $\Sigma$ and $\Gamma$, and any formula $\varphi$, we have
\[\Sigma ;\Gamma \Vdash \varphi \Longleftrightarrow  \Sigma ;\Gamma \VDash \varphi \Longleftrightarrow \Sigma ; \Gamma \vDash \varphi.\]
\end{thm}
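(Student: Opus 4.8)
The plan is to assemble the three-way equivalence from results already in hand. The equivalence $\Sigma;\Gamma \Vdash \varphi \Leftrightarrow \Sigma;\Gamma \VDash \varphi$ is precisely Theorem~\ref{algebraic completeness}, and $\Sigma;\Gamma \VDash \varphi \Rightarrow \Sigma;\Gamma \vDash \varphi$ is Proposition~\ref{neighbourhood soundness}. Hence only the neighbourhood completeness direction $\Sigma;\Gamma \vDash \varphi \Rightarrow \Sigma;\Gamma \VDash \varphi$ remains, and I will prove its contrapositive. The one genuine design decision is to refute the ostensibly stronger relation $\vDash^\ast$ instead of $\vDash$: by Proposition~\ref{two glocal neighbourhood relations} the two coincide, but $\vDash^\ast$ only requires the $\Sigma$-formulas to hold throughout a single $0$-neighbourhood of the distinguished world, which is exactly what the representation lemma of the previous section produces, whereas the literal ``$\forall y \neq x$'' clause of $\vDash$ would be unmanageable on the representing frame.

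So assume $\Sigma;\Gamma \not\VDash \varphi$, and fix a $\Box$-founded $\mathsf{GLP}$-algebra $\mathcal{A}$ with a valuation $v$ such that $\Box_0 v(\xi)=1$ for every $\xi\in\Sigma$ while $v(\varphi)\notin F_\Gamma$, where $F_\Gamma:=\langle\{v(\psi)\mid\psi\in\Gamma\}\rangle$. Put $F_\Sigma:=\langle\{v(\xi)\mid\xi\in\Sigma\}\rangle$. Since $\Box_0$ is monotone and distributes over $\wedge$, every element of $F_\Sigma$ has $\Box_0$-value $1$, i.e. $F_\Sigma\subseteq\Box_0^{-1}1$. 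Applying Lemma~\ref{important lemma 2} to $\mathcal{A}$ and $F_\Sigma$, I obtain topologies $\tau_0,\tau_1,\dots$ on $\mathit{Ult}\:\mathcal{A}$ making $(\mathit{Ult}\:\mathcal{A},\tau_0,\tau_1,\dots)$ a $\mathsf{GLP}$-space with $\tau_i\in\mathit{Top}\: \mathcal{A}_i$ for all $i$, and such that $\mathit{cd}_{\tau_0}(D)=\mathit{Ult}\:\mathcal{A}$ for $D:=\bigcap\{\widehat{a}\mid a\in F_\Sigma\}=\{u\in\mathit{Ult}\:\mathcal{A}\mid F_\Sigma\subseteq u\}$. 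By Proposition~\ref{GLP-spaces=GLP-frames}, $\mathcal{X}:=(\mathit{Ult}\:\mathcal{A},\mathit{cd}_{\tau_0},\mathit{cd}_{\tau_1},\dots)$ is a $\mathsf{GLP}$-frame, and since $\tau_i\in\mathit{Top}\: \mathcal{A}_i$ the Stone map $a\mapsto\widehat{a}$ is a homomorphism of $\mathsf{GLP}$-algebras from $\mathcal{A}$ into the powerset algebra of $\mathcal{X}$. Taking the valuation $v^\prime$ on $\mathcal{X}$ with $v^\prime(p)=\widehat{v(p)}$, a routine induction on formulas gives $v^\prime(\theta)=\widehat{v(\theta)}$ for all $\theta$, so in $\mathcal{M}:=(\mathcal{X},v^\prime)$ we have $\mathcal{M},u\vDash\theta$ iff $v(\theta)\in u$.

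It remains to choose the world and the neighbourhood. The filter $F_\Gamma$ is proper (otherwise $v(\varphi)\in F_\Gamma$), and since $v(\varphi)\notin F_\Gamma$ the family $F_\Gamma\cup\{\neg v(\varphi)\}$ has the finite intersection property; by the Boolean ultrafilter theorem extend it to an ultrafilter $x\in\mathit{Ult}\:\mathcal{A}$. Then $v(\psi)\in x$ for every $\psi\in\Gamma$ and $v(\varphi)\notin x$, i.e. $\mathcal{M},x\vDash\psi$ for $\psi\in\Gamma$ and $\mathcal{M},x\not\vDash\varphi$. From $\mathit{cd}_{\tau_0}(D)=\mathit{Ult}\:\mathcal{A}$ there is a punctured $0$-neighbourhood $W$ of $x$ with $W\subseteq D$; let $U:=W\cup\{x\}$, a $0$-neighbourhood of $x$. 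Every $y\in U\setminus\{x\}=W$ satisfies $F_\Sigma\subseteq y$, hence $v(\xi)\in y$ and $\mathcal{M},y\vDash\xi$ for each $\xi\in\Sigma$. Thus $\mathcal{M}$, the world $x$ and the $0$-neighbourhood $U$ witness $\Sigma;\Gamma\not\vDash^\ast\varphi$, whence $\Sigma;\Gamma\not\vDash\varphi$ by Proposition~\ref{two glocal neighbourhood relations}. This establishes $\Sigma;\Gamma\vDash\varphi\Rightarrow\Sigma;\Gamma\VDash\varphi$ and completes the proof.

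With Section~\ref{s4} and the earlier part of Section~\ref{s5} available, none of these steps is individually difficult; the point requiring care is the interface between the two consequence relations, namely feeding $F_\Sigma=\langle\{v(\xi)\mid\xi\in\Sigma\}\rangle$ into Lemma~\ref{important lemma 2} so that the clause $\mathit{cd}_{\tau_0}(D)=\mathit{Ult}\:\mathcal{A}$ yields exactly the $0$-neighbourhood demanded by $\vDash^\ast$, together with checking simultaneously that the Stone map respects every $\Box_i$ — which is guaranteed by $\tau_i\in\mathit{Top}\: \mathcal{A}_i$ for all $i$.
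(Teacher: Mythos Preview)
Your proof is correct and follows essentially the same route as the paper's: both reduce to the implication $\Sigma;\Gamma\vDash\varphi\Rightarrow\Sigma;\Gamma\VDash\varphi$ via Theorem~\ref{algebraic completeness} and Proposition~\ref{neighbourhood soundness}, feed the filter $\langle\{v(\xi)\mid\xi\in\Sigma\}\rangle$ into Lemma~\ref{important lemma 2}, pick an ultrafilter separating $v(\varphi)$ from $\langle\{v(\psi)\mid\psi\in\Gamma\}\rangle$, and invoke Proposition~\ref{two glocal neighbourhood relations} to work with $\vDash^\ast$. The only difference is that you frame it as a contrapositive while the paper uses reductio inside a direct argument, which is purely cosmetic.
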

\begin{proof}
From Theorem \ref{algebraic completeness} and Proposition \ref{neighbourhood soundness}, it remains to show that $\Sigma ;\Gamma \VDash \varphi$ whenever $ \Sigma ;\Gamma \vDash \varphi$. Assume $ \Sigma ;\Gamma \vDash \varphi$. Also, assume we have a $\Box$-founded $\mathsf{GLP}$-algebra $\mathcal{A}= (A, \wedge, \vee, \to, 0, 1, \Box_0, \Box_1, \dotsc)$ and a valuation $v$ in $\mathcal{A}$ such that $ \Box_0 v(\xi)=1 $ for any $ \xi \in \Sigma$. We prove $v(\varphi)\in \langle \{v(\psi)\mid \psi \in \Gamma\}\rangle$ by \textit{reductio ad absurdum}.

If $v(\varphi)\nin \langle \{v(\psi)\mid \psi \in \Gamma\}\rangle$, then, by the Boolean ultrafilter theorem, there exists an ultrafilter $h$ of $\mathcal{A}$ such that $v(\varphi)\nin h$ and $\langle \{v(\psi)\mid \psi \in \Gamma\}\rangle \subset h$. In addition, let $F= \langle \{v(\xi)\mid \xi \in \Sigma\}\rangle$. Notice that $F\subset \Box^{-1}_0 1$. By Lemma \ref{important lemma 2}, there is a series of topologies $\tau_0, \tau_1, \dotsc$ on $\mathit{Ult}\: \mathcal{A}$ such that $(\mathit{Ult}\: \mathcal{A}, \tau_0, \tau_1, \dotsc)$ is a $\mathsf{GLP}$-space and $\tau_i \in \mathit{Top}\: \mathcal{A}_i$ for any $i\in \mathbb{N}$. Moreover, $\mathit{cd}_{\tau_0} (\bigcap \{\widehat{a}\mid a\in F\})= \mathit{Ult}\: \mathcal{A}$.
Note that $\mathcal{X}=(\mathit{Ult}\: \mathcal{A},  \mathit{cd}_{\tau_0}, \mathit{cd}_{\tau_1}, \dotsc)$ is a $\mathsf{GLP}$-frame by Proposition \ref{GLP-spaces=GLP-frames}. Besides, the mapping $\:\widehat{\cdot}\;\colon a \mapsto \widehat{a}\:$ is an embedding of the $\mathsf{GLP}$-algebra $\mathcal{A}$ into the powerset $\mathsf{GLP}$-algebra of $\mathcal{X}$. 
 
We see that $w \colon \theta \mapsto \widehat{v(\theta)}$ is valuation over $\mathcal{X}$. Since $\bigcap \{\widehat{a}\mid a\in F\}\subset w(\xi)$ for each $\xi\in \Sigma $ and $ \mathit{cd}_{\tau_0} (\bigcap \{\widehat{a}\mid a\in F\})= \mathit{Ult}\: \mathcal{A}$, there is a $0$-neighbourhood $U$ of $h$ in $\mathcal{X}$ such that 
\[\forall u \in U\setminus \{h\} \;\;\forall \xi \in \Sigma \;\; (\mathcal{X},w), u \vDash \xi . \]  
Since $\langle \{v(\psi)\mid \psi \in \Gamma\}\rangle \subset h$, we have $(\mathcal{X},w), h \vDash \psi $ for every $\psi \in \Gamma$. From the assumption $ \Sigma; \Gamma \vDash \varphi$, we also have $ \Sigma; \Gamma \vDash^\ast \varphi$ by Proposition \ref{two glocal neighbourhood relations}. Therefore, $(\mathcal{X},w),h \vDash \varphi $ and $v(\varphi)\in h$, which is a contradiction with the condition $v(\varphi)\nin h$. We conclude that $v(\varphi)\in \langle \{v(\psi)\mid \psi \in \Gamma\}\rangle$.

\end{proof}

\begin{cor}
For any set of formulas $\Gamma$ and any formula $\varphi$, we have
\[
\Gamma \Vdash_l \varphi \Longleftrightarrow  \Gamma \VDash_l \varphi \Longleftrightarrow  \Gamma \vDash_l \varphi,\quad \quad
\Gamma \Vdash_g \varphi \Longleftrightarrow  \Gamma \VDash_g \varphi \Longleftrightarrow  \Gamma \vDash_g \varphi.
\]
\end{cor}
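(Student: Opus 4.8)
The plan is to derive this corollary directly from Theorem \ref{neighbourhood completeness} by instantiating the parameter $\Sigma$ in the two canonical ways that recover the local and global relations. No new construction is needed; the work is purely a matter of unwinding the definitional identities collected in the earlier sections.

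First I would handle the local equivalences. Recall that $\emptyset;\Gamma \Vdash_\omega \varphi$ is by convention written $\Gamma \Vdash_l \varphi$, and that the remarks following the definitions of $\VDash$ and $\vDash$ record $\emptyset;\Gamma \VDash \varphi \Leftrightarrow \Gamma \VDash_l \varphi$ and $\emptyset;\Gamma \vDash \varphi \Leftrightarrow \Gamma \vDash_l \varphi$. Applying Theorem \ref{neighbourhood completeness} with $\Sigma = \emptyset$ gives
\[
\emptyset;\Gamma \Vdash \varphi \Longleftrightarrow \emptyset;\Gamma \VDash \varphi \Longleftrightarrow \emptyset;\Gamma \vDash \varphi,
\]
and substituting the three identities above yields $\Gamma \Vdash_l \varphi \Leftrightarrow \Gamma \VDash_l \varphi \Leftrightarrow \Gamma \vDash_l \varphi$.

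For the global equivalences I would instead take $\Sigma = \Gamma$. Here the relevant identities are $\Gamma \Vdash_\omega \varphi \Leftrightarrow \Gamma;\Gamma \Vdash_\omega \varphi$ (noted just before Theorem \ref{inf <-> omega}), together with $\Gamma;\Gamma \VDash \varphi \Leftrightarrow \Gamma \VDash_g \varphi$ and $\Gamma;\Gamma \vDash \varphi \Leftrightarrow \Gamma \vDash_g \varphi$ from the corresponding remarks; since $\Gamma \Vdash_g \varphi$ abbreviates $\Gamma \Vdash_\omega \varphi$, we get $\Gamma \Vdash_g \varphi \Leftrightarrow \Gamma;\Gamma \Vdash \varphi$. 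Theorem \ref{neighbourhood completeness} with $\Sigma = \Gamma$ then reads $\Gamma;\Gamma \Vdash \varphi \Leftrightarrow \Gamma;\Gamma \VDash \varphi \Leftrightarrow \Gamma;\Gamma \vDash \varphi$, and translating through the identities gives $\Gamma \Vdash_g \varphi \Leftrightarrow \Gamma \VDash_g \varphi \Leftrightarrow \Gamma \vDash_g \varphi$. Since the argument is a pure specialization, there is essentially no obstacle; the only point requiring any care is to make sure the notational conventions for $\Vdash_l$, $\Vdash_g$ (as abbreviations of $\omega$-derivability relations) are matched against the right instance of the general relation $\Vdash = \Vdash_\omega$.
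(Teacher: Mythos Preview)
Your proposal is correct and matches the paper's intended argument: the corollary is stated without proof immediately after Theorem~\ref{neighbourhood completeness}, precisely because it follows by specializing $\Sigma=\emptyset$ and $\Sigma=\Gamma$ and invoking the definitional identities you cite. There is nothing to add.
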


\section{From infinitary derivations to ordinary ones}
\label{s6}

We now clarify the connection between the original $\mathsf{GLP}$ and its infinitary extension. To do so, we generalize the ideas of the bachelor’s thesis \cite{Chepasov2022} written under our supervision by Anatoliy Chepasov.

For the logic $\mathsf{GLP}$, words are usually defined as formulas of the form $\Diamond_{i_1} \dotso \Diamond _{i_k} \top$, where $k$ can be $0$. However, it is more convenient for us to define them in the dual way. We will understand \emph{words} as formulas of the form $\Box_{i_1} \dotso \Box _{i_k} \bot$. Now we recall some facts about words that will be needed in what follows.

\begin{thm}[{see Proposition 3 and Theorem 1 from \cite{Beklemishev2005b}}]\label{well-foundness for words}There is no sequence of words $(\varphi_i)_{i\in \mathbb{N}}$ such that $\mathsf{GLP}\vdash \Box \varphi_{i+1} \to \varphi_i$ for all $i\in \mathbb{N}$.
\end{thm}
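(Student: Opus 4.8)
The assertion says exactly that the relation $\varphi\prec\psi:\Longleftrightarrow\mathsf{GLP}\vdash\Box_0\varphi\to\psi$ on words has no infinite descending chain, i.e.\ is well-founded. The plan is to identify $\prec$ on the $\Box$-form words of this section, by Boolean negation, with the standard ordering of the $\Diamond$-form words (``worms'') of $\mathsf{GLP}$, which is known to be a well-ordering, and then to invoke that known fact.

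First I would set up the duality. For a word $\varphi=\Box_{j_1}\dots\Box_{j_k}\bot$ put $A_\varphi:=\neg\varphi$; pushing the negation inward via $\Diamond_m\psi=\neg\Box_m\neg\psi$ and $\neg\bot=\top$ shows that $A_\varphi$ is $\mathsf{GLP}$-equivalent to the worm $\Diamond_{j_1}\dots\Diamond_{j_k}\top$, and $\varphi\mapsto A_\varphi$ is a bijection, modulo $\mathsf{GLP}$-provable equivalence, between $\Box$-form words and worms. Let $B\prec_w A:\Longleftrightarrow\mathsf{GLP}\vdash A\to\Diamond_0 B$ be the usual ordering of worms. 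Then, contraposing $\Box_0\varphi\to\psi$ and using the $\mathsf{GLP}$-provable identity $\neg\Box_0\varphi\leftrightarrow\Diamond_0 A_\varphi$, I get that $\varphi\prec\psi$ holds iff $\mathsf{GLP}\vdash A_\psi\to\Diamond_0 A_\varphi$, i.e.\ iff $A_\varphi\prec_w A_\psi$. Hence $\varphi\mapsto A_\varphi$ is an order isomorphism between $\prec$ on the words and $\prec_w$ on the worms. Since $\prec_w$ is a well-ordering (the cited results of Beklemishev), so is $\prec$; in particular it has no infinite descending chain, which is the assertion of the theorem. Given a hypothetical sequence $(\varphi_i)$ with $\mathsf{GLP}\vdash\Box\varphi_{i+1}\to\varphi_i$, this translates into an infinite $\prec_w$-descending chain $A_{\varphi_0}\succ_w A_{\varphi_1}\succ_w\dots$, equivalently a strictly decreasing sequence of ordinals under the ordinal notation $o$ attached to worms, which is absurd.

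The real work lies entirely in the external input — that $\prec_w$ well-orders the worms modulo $\mathsf{GLP}$-provable equivalence — and this is the main obstacle. The standard route is the normal-form analysis of the closed fragment of $\mathsf{GLP}$: every worm is $\mathsf{GLP}$-equivalent to one in a canonical $\prec_w$-decreasing shape; one gives an explicit criterion for $\mathsf{GLP}$-provable implication between worms in normal form; and one establishes well-foundedness by induction on the largest modality $n$ occurring, reducing worms in the language $\{0,\dots,n{+}1\}$ to finite tuples of worms in $\{0,\dots,n\}$ and combining their ordinal values by a suitable monotone operation. One technical point worth flagging: the modalities occurring in the $\varphi_i$ need not be bounded uniformly in $i$, but each individual word uses only finitely many modalities and the orderings for the sub-languages $\{0,\dots,n\}$ cohere as $n$ grows, so the well-ordering of the whole closed fragment follows by passing to the union. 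Rather than redevelop this machinery I would spell out only the duality reduction above and cite Proposition 3 and Theorem 1 of \cite{Beklemishev2005b} for the well-ordering itself.
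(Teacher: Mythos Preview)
Your proposal is correct and matches the paper's approach: the paper does not prove this theorem at all but simply cites Proposition~3 and Theorem~1 of \cite{Beklemishev2005b}, exactly as you propose to do. Your explicit duality between $\Box$-form words and $\Diamond$-form worms, translating $\mathsf{GLP}\vdash\Box_0\varphi_{i+1}\to\varphi_i$ into an infinite $\prec_w$-descending chain of worms, is correct and in fact spells out a bridging step that the paper leaves implicit.
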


\begin{lem}[{see Corollary 12 from \cite{Beklemishev2005b}}]\label{l1}
Every ground formula, i.e. every formula that do not contain variables, is provably equivalent in $\mathsf{GLP}$ to a Boolean combination of words.
\end{lem}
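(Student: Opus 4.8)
The statement to prove is Lemma~\ref{l1}: every ground formula of $\mathsf{GLP}$ is provably equivalent to a Boolean combination of words $\Box_{i_1}\dotsm\Box_{i_k}\bot$. Since this is quoted as a known result from \cite{Beklemishev2005b}, I would expect the excerpt's authors simply to cite it; but let me sketch how one would actually prove it.

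\textbf{Approach.} The plan is to argue by induction on the structure of the ground formula $\varphi$. The base case is $\varphi=\bot$, which is already a word, and the Boolean connective case ($\varphi=\psi\to\chi$) is immediate since the class of Boolean combinations of words is closed under $\to$. The only real work is the modal case $\varphi=\Box_i\psi$ where, by the induction hypothesis, $\psi$ is provably equivalent to some Boolean combination $B$ of words. Since $\mathsf{GLP}\vdash\psi\leftrightarrow B$, necessitation and the normality of $\Box_i$ (derivable from axioms (i), (ii), $\mathsf{mp}$, $\mathsf{nec}$, together with $\Box_0\chi\to\Box_i\chi$ from axiom (v)) give $\mathsf{GLP}\vdash\Box_i\psi\leftrightarrow\Box_i B$. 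So it suffices to show that $\Box_i B$ is provably equivalent to a Boolean combination of words whenever $B$ is.

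\textbf{Key step: pushing $\Box_i$ through a Boolean combination of words.} First put $B$ into disjunctive normal form over the finite set of words occurring in it, so $B\leftrightarrow\bigvee_j C_j$ where each $C_j$ is a conjunction of words and negated words. Then $\Box_i B$ need not distribute over $\vee$, but one can use the following standard facts about $\mathsf{GLP}$: (a) $\mathsf{GLP}\vdash\Box_i(\chi_1\wedge\dotsb\wedge\chi_m)\leftrightarrow\Box_i\chi_1\wedge\dotsb\wedge\Box_i\chi_m$; (b) for words, there is a linear preorder governing provable implications between them (the ``reduction'' or ordering of words, cf. Theorem~\ref{well-foundness for words}), so that a conjunction of words is provably equivalent to a single word, and a conjunction $w\wedge\neg w_1\wedge\dotsb\wedge\neg w_r$ of a word with negated words is provably equivalent either to $\bot$ or to $w$ itself (depending on whether some $w_t$ lies below $w$ in the ordering); (c) the key ``fixed-point''-style identities of $\mathsf{GLP}$ for words, e.g. $\mathsf{GLP}\vdash\Box_i\Box_i\chi\leftrightarrow\Box_i\chi$ (transitivity, noted in the excerpt), $\mathsf{GLP}\vdash\Box_i w\to\Box_j w$ for $i\le j$, and the $\mathsf{GLP}$-specific law relating $\Box_{i+1}$ and $\Diamond_i$ from axiom (iv), which together let one compute $\Box_i w$ for any word $w$ as (provably equivalent to) a word again. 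Putting (a)–(c) together: each $C_j$ reduces to a single word (or $\bot$), so $\Box_i C_j$ reduces to a word (or $\top$); and $\Box_i\bigvee_j C_j$ can be handled by noting that over words the disjunction $\bigvee_j C_j$ is itself provably equivalent to the maximal $C_j$ in the linear ordering, hence again to a single word $w_0$, so $\Box_i B\leftrightarrow\Box_i w_0$, a word. This closes the induction.

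\textbf{Main obstacle.} The delicate point is fact (b)/(c): establishing that the words are linearly preordered by provable implication in $\mathsf{GLP}$ and that $\Box_i$ sends a word to (something provably equivalent to) a word. This is precisely the content of the ordinal/worm analysis of $\mathsf{GLP}$ (Beklemishev's work on worms), and it is nontrivial — it is what Theorem~\ref{well-foundness for words} and the cited Proposition~3/Theorem~1 of \cite{Beklemishev2005b} encapsulate. Given that the excerpt explicitly imports Lemma~\ref{l1} and Theorem~\ref{well-foundness for words} from \cite{Beklemishev2005b}, the honest move here is to cite those results rather than re-derive the worm ordering; the proof of the lemma as used in the paper is then just the short structural induction above, with the modal case discharged by the imported word-ordering facts.
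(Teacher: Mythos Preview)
The paper does not prove this lemma at all; it is simply cited from \cite{Beklemishev2005b} (Corollary~12 there), exactly as you anticipated at the end of your proposal. So as far as comparison with the paper goes, your ``honest move'' is precisely what the authors do.

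That said, the sketch you offer for the modal step contains a genuine error. Your claim (b), that a conjunction $w\wedge\neg w_1\wedge\dotsb\wedge\neg w_r$ of a word with negated words is provably equivalent either to $\bot$ or to $w$ itself, is false. Take $w=\Box_0\Box_0\bot$ and $w_1=\Box_0\bot$. We have $w_1\to w$ in $\mathsf{GLP}$ (since $\Box_0\bot\to\Box_0\psi$ for every $\psi$), but $w\wedge\neg w_1=\Box_0\Box_0\bot\wedge\Diamond_0\top$ is neither equivalent to $\bot$ (that would require $\mathsf{GLP}\vdash\Box_0\Box_0\bot\to\Box_0\bot$, refuted at rank~$1$ in any $\mathsf{GL}$-frame) nor to $w$ (refuted at rank~$0$, where $\Box_0\bot$ holds). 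So your DNF route collapses: the conjuncts $C_j$ need not reduce to single words, and hence your conclusion that $B$ itself reduces to a single word $w_0$ is unjustified. Note also that Lemma~\ref{l4} in the paper only asserts linear comparability for words of the form $\Box_0\alpha$, not for arbitrary words, so your appeal to a linear preorder on all words is not supported by the lemmas actually available.

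The repair is to use CNF rather than DNF: write $B\leftrightarrow\bigwedge_j D_j$, so $\Box_i B\leftrightarrow\bigwedge_j\Box_i D_j$; each clause $D_j$ has the form $\bigwedge_l\beta_l\to\bigvee_m\alpha_m$ with $\alpha_m,\beta_l$ words, and by Lemma~\ref{l2} the disjunction $\bigvee_m\alpha_m$ collapses to a single word $\alpha$. One is then left with showing that $\Box_i(\bigwedge_l\beta_l\to\alpha)$ is equivalent to a word or to $\top$ --- this is exactly Lemma~\ref{l3} for $i=0$, and its analogue for $i>0$ is part of the closed-fragment analysis in \cite{Beklemishev2005b}. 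That is the actual content one must import, not a linear ordering on literals.
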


\begin{lem}[{see Lemma 9 from \cite{Beklemishev2005b}}]\label{l2}
A disjunction of two words $\varphi \vee \psi$ is provably equivalent in $\mathsf{GLP}$ to a word.
\end{lem}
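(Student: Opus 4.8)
The plan is to deduce the statement from the fact that $\mathsf{GLP}$-provable implication \emph{linearly} pre-orders the set of words; granting this, the lemma is a one-liner. Indeed, if $\varphi$ and $\psi$ are words and $\mathsf{GLP}\vdash\varphi\to\psi$, then $\varphi\vee\psi$ is provably equivalent to $\psi$, while if $\mathsf{GLP}\vdash\psi\to\varphi$ it is provably equivalent to $\varphi$; in either case it is provably equivalent to a word. So the whole work goes into establishing comparability of any two words.

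To obtain comparability I would first bring words to a canonical form using the standard $\mathsf{GLP}$-reduction identities for stacked modalities of \cite{Beklemishev2005b} (these are driven by the monotonicity axiom (v), $\Box_i\chi\to\Box_{i+1}\chi$, together with the L\"ob axiom (iii) for each $\Box_i$ and axiom (iv)), and then compare two normal forms. This last step is where the genuine combinatorics lies: one equips words with the standard ordinal assignment $o(\cdot)$ (with values below $\varepsilon_0$) and shows that, for dual words $\varphi_1,\varphi_2$, $\mathsf{GLP}\vdash\varphi_1\to\varphi_2$ holds exactly when $o(\neg\varphi_1)\leqslant o(\neg\varphi_2)$; since ordinals are linearly ordered, comparability of words follows at once. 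The substantive half of this equivalence — that $\mathsf{GLP}$ proves precisely the implications predicted by the ordinal ordering — I would establish by induction on normal forms, exploiting the interplay of axioms (iii)--(v) and, crucially, the well-foundedness fact already recorded as Theorem~\ref{well-foundness for words}, which excludes the ``downward'' pathologies. The cleanest self-contained treatment is \cite{Beklemishev2005b}, from which the present lemma is quoted.

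The main obstacle is exactly this comparison step. The normal-form reduction is routine modal bookkeeping, but verifying that provability tracks the ordinal ordering forces one to interleave delicately the monotonicity axiom $\Box_i\chi\to\Box_{i+1}\chi$ with the L\"ob axiom for each $\Box_i$, and this is precisely where Theorem~\ref{well-foundness for words} (Proposition~3 and Theorem~1 of \cite{Beklemishev2005b}) carries the weight — promoting ``no infinite descent'' to ``full linearity'' is the crux. I also note that one cannot shortcut the argument through Lemma~\ref{l1}: that lemma only gives equivalence of $\varphi\vee\psi$ to \emph{some} Boolean combination of words, and collapsing such a combination to a single word again requires the linear ordering.
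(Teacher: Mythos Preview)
The paper does not give a proof of this lemma; it merely quotes Lemma~9 of \cite{Beklemishev2005b}. Your top-level strategy---derive the lemma from the fact that $\mathsf{GLP}$-provable implication linearly pre-orders words, so that $\varphi\vee\psi$ is provably equivalent to whichever of $\varphi,\psi$ is weaker---is correct and is exactly how the cited source obtains the result.

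Where your sketch overshoots is in the justification of linearity. You propose to prove the full biconditional $\mathsf{GLP}\vdash\varphi_1\to\varphi_2\iff o(\neg\varphi_1)\leqslant o(\neg\varphi_2)$ and single out Theorem~\ref{well-foundness for words} as the ``crux'', describing the task as ``promoting `no infinite descent' to `full linearity'\,''. Two issues. First, well-foundedness and linearity are orthogonal order-theoretic properties; neither yields the other, so the heuristic is misleading. Second, you only need the \emph{if} direction of your biconditional: since ordinals are linearly ordered, one of the two inequalities always holds, and the corresponding provable implication follows. That direction is established in \cite{Beklemishev2005b} by a direct induction on normal-form words using only axioms (iii)--(v); no ordinal completeness and no well-foundedness are required. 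In fact, in the development of \cite{Beklemishev2005b} the normal-form and linearity results precede and feed into the ordinal analysis culminating in Theorem~\ref{well-foundness for words}, so invoking the latter to obtain the former reverses the dependency and risks circularity. Drop the appeal to Theorem~\ref{well-foundness for words} and argue linearity directly on normal forms; the rest of your plan then goes through cleanly.
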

\begin{lem}[{see Lemma 10 from \cite{Beklemishev2005b}}]\label{l3}
For any words $\alpha$, $\beta_1, \dotsc , \beta_k$, the formula 
\[\Box_0 (\bigwedge_{1\leqslant i\leqslant k} \beta_i \to \alpha)\]
is equivalent in $\mathsf{GLP}$ to the word $\Box_0 \alpha$ or to the formula $\top$.
\end{lem}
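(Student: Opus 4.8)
The plan is to collapse the conjunction of words to a single word and then to resolve $\Box_0(\beta \to \alpha)$ by comparing $\alpha$ and $\beta$ in the ordering of $\mathsf{GLP}$-words. For $k = 0$ the formula is literally $\Box_0 \alpha$, so assume $k \geqslant 1$. I would use the standard fact (see \cite{Beklemishev2005b}) that words are linearly preordered by provable implication in $\mathsf{GLP}$: among $\beta_1, \dots, \beta_k$ there is then one, say $\beta$, with $\mathsf{GLP} \vdash \beta \to \beta_i$ for every $i$, so that $\mathsf{GLP} \vdash \bigwedge_{1 \leqslant i \leqslant k} \beta_i \leftrightarrow \beta$. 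Hence $\mathsf{GLP} \vdash \Box_0(\bigwedge_i \beta_i \to \alpha) \leftrightarrow \Box_0(\beta \to \alpha)$, and it suffices to show that $\Box_0(\beta \to \alpha)$ is $\mathsf{GLP}$-equivalent to $\Box_0 \alpha$ or to $\top$ for two words $\alpha$ and $\beta$.

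Now I would split on the order between $\beta$ and $\alpha$. If $\mathsf{GLP} \vdash \beta \to \alpha$, then $\beta \to \alpha$ is $\mathsf{GLP}$-equivalent to $\top$, hence so is $\Box_0(\beta \to \alpha)$, and we are in the second alternative. Otherwise linearity gives $\mathsf{GLP} \vdash \alpha \to \beta$ while $\mathsf{GLP} \nvdash \beta \to \alpha$; that is, $\alpha$ lies strictly below $\beta$. In this case I claim $\mathsf{GLP} \vdash \Box_0(\beta \to \alpha) \leftrightarrow \Box_0 \alpha$. The implication $\Box_0 \alpha \to \Box_0(\beta \to \alpha)$ is immediate from the tautology $\alpha \to (\beta \to \alpha)$ together with necessitation and axiom (ii). The converse implication $\Box_0(\beta \to \alpha) \to \Box_0 \alpha$ carries the actual content of the lemma.

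For that converse implication I would argue by induction along the well-founded ordering of words provided by Theorem~\ref{well-foundness for words}, using the outermost modality of $\beta$. Writing $\beta = \Box_j \beta'$, one strips $\Box_j$ from under $\Box_0$ by means of the $\mathsf{GLP}$-axioms: for $j = 0$ this is a direct L\"{o}b-style step, with $\mathsf{GLP} \vdash \Box_0 \Diamond_0 \top \to \Box_0 \bot$ (axiom (iii) with $\varphi = \bot$) as the prototype, while for $j \geqslant 1$ one first uses axiom (v) and its dual to push all modalities down to level $0$; at each step the formula appearing inside $\Box_0$ is rewritten as a Boolean combination of words via Lemma~\ref{l1}, and disjunctions of words are re-collapsed to single words via Lemma~\ref{l2}, so that the induction hypothesis can be applied to the words of strictly smaller rank that occur. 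I expect the main obstacle to be precisely this point: describing how $\Box_0$ acts on $\beta \to \alpha$ when $\beta$ carries higher modalities, and arranging the induction so that every auxiliary word it produces lies genuinely below $\beta$, so that well-foundedness---not merely linearity---does the work.
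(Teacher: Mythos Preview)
The paper does not prove this lemma; it merely cites it as Lemma~10 of \cite{Beklemishev2005b}, so there is no in-paper argument to compare against. On its own merits, however, your proof has a genuine gap at the very first step.

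You claim that words are linearly preordered by provable implication in $\mathsf{GLP}$ and use this to collapse $\bigwedge_i\beta_i$ to a single $\beta_j$. This is false. The words $\Box_1\bot$ and $\Box_0\Box_1\bot$ are incomparable: in the Ignatiev universal model for the closed fragment of $\mathsf{GLP}$, the point $(\omega,1,0,\dotsc)$ satisfies $\Box_0\Box_1\bot$ but not $\Box_1\bot$, while the point $(\omega+1,0,0,\dotsc)$ satisfies $\Box_1\bot$ but not $\Box_0\Box_1\bot$. What Lemma~\ref{l4} actually records is linearity of $\Box_0$-\emph{prefixed} words, not of bare words; and Lemma~\ref{l2} (a disjunction of words is a word) does not say that $\varphi\vee\psi$ is equivalent to one of $\varphi,\psi$. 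Your subsequent case split (``otherwise linearity gives $\mathsf{GLP}\vdash\alpha\to\beta$'') rests on the same false premise, so both the reduction to a single $\beta$ and the dichotomy that follows collapse.

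The argument in \cite{Beklemishev2005b} does not pass through linearity of bare words; it handles the conjunction under the outer $\Box_0$ using the normal-form calculus for worms. If you want to salvage your outline, the right first move is to keep the entire formula under $\Box_0$ and exploit Lemma~\ref{l4} only at that level, rather than attempting to simplify $\bigwedge_i\beta_i$ in isolation. The inductive sketch you give for the ``hard direction'' is too schematic to evaluate, but the argument has already derailed before reaching it.
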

\begin{lem}[{see Corollary 6 from \cite{Beklemishev2005b}}]\label{l4}
For any words $\varphi$ and $\psi$, we have $\mathsf{GLP}\vdash \Box_0 \varphi \to \Box_0 \psi$ or $\mathsf{GLP}\vdash \Box_0 \psi \to \Box_0 \varphi$.
\end{lem}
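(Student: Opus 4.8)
The plan is to reduce the statement, via the standard duality, to the linearity of the $\Diamond$-version of words (the usual \emph{worms} of the $\mathsf{GLP}$ literature) and then to prove that linearity by the reduction method of Beklemishev; this is in fact the content of Corollary~6 of \cite{Beklemishev2005b}, and the route sketched below follows that source. First I would record the elementary duality: by the definition of $\Diamond_i$ together with classical logic, a word $\Box_{i_1}\dotso\Box_{i_k}\bot$ is $\mathsf{GLP}$-provably equivalent to $\neg\Diamond_{i_1}\dotso\Diamond_{i_k}\top$, i.e.\ to a negated worm. Writing $\varphi=\neg A$ and $\psi=\neg B$ with $A,B$ worms, one gets $\mathsf{GLP}\vdash\Box_0\varphi\leftrightarrow\neg\Diamond_0 A$ and $\mathsf{GLP}\vdash\Box_0\psi\leftrightarrow\neg\Diamond_0 B$, so the claim becomes
\[\mathsf{GLP}\vdash\Diamond_0 B\to\Diamond_0 A\quad\text{or}\quad\mathsf{GLP}\vdash\Diamond_0 A\to\Diamond_0 B.\]
Since $\Diamond_0 A$ and $\Diamond_0 B$ are themselves worms (prepend the letter $0$), it suffices to prove that any two worms beginning with $0$ are comparable under $\mathsf{GLP}$-provable implication.

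The engine of the argument is the reduction operation on words of $\mathsf{GLP}$: defined by recursion on word length, using only the three $\mathsf{GLP}$-specific axioms (monotonicity $\Box_i\varphi\to\Box_{i+1}\varphi$, the transfer principle $\Diamond_i\varphi\to\Box_{i+1}\Diamond_i\varphi$, and L\"ob for $\Box_i$), it brings every word into a normal form, preserving $\mathsf{GLP}$-provable equivalence, in such a way that on normal forms comparability under provable implication is transparent. Lemma~\ref{l3}, asserting that $\Box_0(\bigwedge_i\beta_i\to\alpha)$ is equivalent to $\Box_0\alpha$ or to $\top$, supplies exactly the ``two-valuedness'' that is the local form of linearity in the base cases, while Lemma~\ref{l2}, that a disjunction of two words is again a word, is what lets one amalgamate the sub-cases produced when both worms are decomposed along their occurrences of the letter $0$. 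Given the reduction property, comparability of two worms beginning with $0$ then follows by a simultaneous induction on the two word lengths: decompose both worms at the letter $0$, compare the shorter pieces by the induction hypothesis and the reduction property, and lift the comparison back through $\Diamond_0$ by monotonicity. (An alternative is to argue semantically, using completeness of $\mathsf{GLP}$ with respect to a suitable class of neighbourhood/ordinal models in which the ordering of worms is manifest.) Theorem~\ref{well-foundness for words} is then what upgrades the resulting linear preorder of words to a well-order; it is not needed for bare comparability, but it is what makes the word calculus usable afterwards in Section~\ref{s6}.

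The hard part is the reduction property itself. It is the single place where the interaction between the different modalities $\Box_i$ — entirely absent in the mono-modal fragment $\mathsf{GL}$ — really enters, and where all three $\mathsf{GLP}$-specific axioms must be orchestrated inside one induction; controlling how an initial block of $0$'s collapses (via transitivity) and how a higher letter is absorbed (via transfer) is exactly the delicate combinatorics. By contrast, the remaining ingredients — the duality bookkeeping, the decomposition of worms at the letter $0$, and the case amalgamation via Lemma~\ref{l2} — are routine once the reduction is in hand.
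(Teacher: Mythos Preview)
The paper does not prove this lemma at all; it simply imports it as Corollary~6 of \cite{Beklemishev2005b}. Your high-level plan---dualize words to worms, reduce the claim to linearity of worms beginning with $0$ under $\mathsf{GLP}$-provable implication, and establish that linearity via Beklemishev's reduction/normal-form machinery---does correctly track the route taken in the cited source.

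Two cautions, however. First, you propose to use Lemmas~\ref{l2} and~\ref{l3} (Lemmas~9 and~10 of \cite{Beklemishev2005b}) as ingredients, but in Beklemishev's development these come \emph{after} Corollary~6; the linearity of worms is established first, directly from the reduction lemma, and the results about disjunctions and $\Box_0$ of implications are downstream of it. Invoking them here risks circularity, and in any case is not how the original argument proceeds: the induction on word length uses only the normal-form lemma and the elementary $\mathsf{GLP}$ axioms, not the later Boolean-combination lemmas. Second, you are candid that the reduction property is the hard part and you do not carry it out; that is fair as a proof plan, but it means your proposal is really a pointer to the cited proof rather than an independent argument.
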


\begin{prop}[{see Lemma 4.1.2 from \cite{Chepasov2022}}]\label{ground formulas to words}
For any ground formula $\varphi$, the formula $\Box_0 \varphi$ is provably equivalent in $\mathsf{GLP}$ to a word or to the formula $\top$.
\end{prop}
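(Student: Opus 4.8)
The plan is to reduce $\Box_0\varphi$ to a finite conjunction of formulas of the shape $\Box_0\gamma$ with $\gamma$ a word, and then collapse that conjunction using the linearity of the $\Box_0$-words established in Lemma \ref{l4}. The key design choice is to use \emph{conjunctive} normal form for $\varphi$, so that $\Box_0$ distributes over the top-level connective.

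First I would apply Lemma \ref{l1} to replace the ground formula $\varphi$ by a provably equivalent Boolean combination of words, and rewrite it in conjunctive normal form, obtaining $\mathsf{GLP}\vdash\varphi\leftrightarrow\bigwedge_{j=1}^{n}C_{j}$ where each clause $C_{j}$ is a disjunction of words and negated words. Since $\Box_0$ respects provable equivalence and commutes with finite conjunctions in $\mathsf{GLP}$ (by necessitation together with axiom (ii)), we have $\mathsf{GLP}\vdash\Box_0\varphi\leftrightarrow\bigwedge_{j=1}^{n}\Box_0 C_{j}$, so it suffices to analyse each $\Box_0 C_j$ separately and then reassemble.

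Fix a clause $C$ and group its disjuncts, writing $\mathsf{GLP}\vdash C\leftrightarrow\bigl(\beta_1\wedge\dots\wedge\beta_q\bigr)\to\bigl(\alpha_1\vee\dots\vee\alpha_p\bigr)$ with all $\alpha_i,\beta_i$ words, where the empty conjunction is read as $\top$ and the empty disjunction as the word $\bot$. Iterating Lemma \ref{l2}, the disjunction $\alpha_1\vee\dots\vee\alpha_p$ is provably equivalent to a single word $\alpha$. If $q=0$, then $\mathsf{GLP}\vdash\Box_0 C\leftrightarrow\Box_0\alpha$, a word. If $q\geq 1$, then $\mathsf{GLP}\vdash\Box_0 C\leftrightarrow\Box_0\bigl(\bigwedge_{1\leqslant i\leqslant q}\beta_i\to\alpha\bigr)$, and Lemma \ref{l3} gives that this is provably equivalent to the word $\Box_0\alpha$ or to $\top$. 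Thus each $\Box_0 C_j$ is provably equivalent to a word of the form $\Box_0\gamma_j$ or to $\top$.

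Finally, discard the clauses for which $\Box_0 C_j$ reduced to $\top$: if all of them do, then $\mathsf{GLP}\vdash\Box_0\varphi\leftrightarrow\top$. Otherwise we are left with a nonempty conjunction $\bigwedge_{j\in J}\Box_0\gamma_j$ with each $\Box_0\gamma_j$ a word. By Lemma \ref{l4}, the finitely many formulas $\Box_0\gamma_j$ $(j\in J)$ are totally preordered by provable implication in $\mathsf{GLP}$, so there is $j_0\in J$ with $\mathsf{GLP}\vdash\Box_0\gamma_{j_0}\to\Box_0\gamma_j$ for all $j\in J$; hence $\mathsf{GLP}\vdash\Box_0\varphi\leftrightarrow\Box_0\gamma_{j_0}$, which is a word. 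Conceptually this argument is just an assembly of the cited results, so I expect the only delicate part to be bookkeeping: tracking the degenerate cases (empty conjunctions and disjunctions, one-literal clauses) and noting that $\Box_0$ applied to a word is again a word, so that the final collapse via Lemma \ref{l4} genuinely produces a word rather than merely a $\Box_0$-prefixed formula.
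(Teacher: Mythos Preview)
Your proof is correct and follows essentially the same route as the paper's: put $\varphi$ into conjunctive normal form over words via Lemma~\ref{l1}, distribute $\Box_0$ over the top conjunction, collapse each clause's positive part to a single word via Lemma~\ref{l2}, reduce each $\Box_0$-clause to a word or $\top$ via Lemma~\ref{l3}, and finally pick the $\prec$-least surviving $\Box_0\gamma_j$ via Lemma~\ref{l4}. The only difference is cosmetic: you separate out the $q=0$ case explicitly, whereas the paper treats all clauses uniformly through Lemma~\ref{l3}.
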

\begin{proof}
By Lemma \ref{l1}, the formula $\varphi$ is provably equivalent in $\mathsf{GLP}$ to a conjunction $\bigwedge \{\varphi_0, \dotsc, \varphi_k\}$, where every $\varphi_i$ has the form $\bigwedge\Gamma_i \to \bigvee \Delta_i$ for some finite sets of words $\Gamma_i$ and $\Delta_i$. From Lemma \ref{l2}, each formula $\bigvee\Delta_i$ is equivalent to a word $\alpha_i$, where $\bigvee \emptyset$ is the word $\bot$. Note that $ \mathsf{GLP}\vdash\Box_0 \varphi\leftrightarrow\bigwedge \{\Box_0\varphi_0, \dotsc, \Box_0\varphi_k\}$. By Lemma \ref{l3}, every conjunct $\Box_0 \varphi_i$ is equivalent to the formula $\top$ or to the word $\Box_0 \alpha_i$. If all conjuncts are equivalent to $\top$, then $\Box_0 \varphi$ is also equivalent to $\top$. Otherwise, $\Box_0 \varphi$ is equivalent to a finite non-empty conjunction of words of the form $\Box_0 \alpha_i$. From Lemma \ref{l4}, there exists $i$ such that $\mathsf{GLP}\vdash \Box_0 \varphi \leftrightarrow \Box_0 \alpha_i $, which concludes the proof.
\end{proof}


Let us remind the reader the uniform interpolation property of $\mathsf{GLP}$. By $\mathit{PV}(\varphi)$, we denote the set of propositional variables of $\varphi$. A formula $\theta$ is called the \emph{$p$-coprojection of a formula $\varphi$} if $\mathit{PV}(\theta)\subset \mathit{PV}(\varphi)\setminus\{p\}$ and, for any $\psi$ such that $p\nin \mathit{PV}(\psi)$, we have
\[\mathsf{GLP}\vdash \psi\to \varphi \Longleftrightarrow \mathsf{GLP}\vdash \psi\to \theta.\]
The latter condition can be equivalently stated as:
\begin{itemize}
\item $\mathsf{GLP}\vdash \theta \to \varphi$;
\item if $\mathsf{GLP}\vdash \psi\to \varphi$ and $p\nin \mathit{PV}(\psi)$, then $\mathsf{GLP}\vdash \psi\to \theta$.
\end{itemize}
\begin{thm}[{see Theorem 3.1 from \cite{Shamkanov2011}}]
The logic $\mathsf{GLP}$ has the uniform interpolation property, i.e. there exists a $p$-coprojection for any formula
and any propositional variable $p$.
\end{thm}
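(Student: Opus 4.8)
The plan is to construct the $p$-coprojection $\theta$ of $\varphi$ explicitly, following the recursive propositional-quantifier-elimination method of Pitts, carried out over a \emph{terminating} proof-search procedure for $\mathsf{GLP}$. (Recall that the $p$-coprojection is, in propositional-quantifier terms, $\forall p\,\varphi$: the weakest $p$-free formula still entailing $\varphi$.) First I would pass to the fragment of $\mathsf{GLP}$ in the finitely many modalities $\Box_0,\dots,\Box_n$ actually occurring in $\varphi$; using conservativity of the finite-modality fragments of $\mathsf{GLP}$ together with the monotonicity axioms $\Box_i a\to\Box_{i+1}a$, it is enough to build $\theta$ inside this fragment and then check that the result is a coprojection for full $\mathsf{GLP}$. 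Next I would fix a Gentzen-style sequent calculus $\mathsf{G}$ for this fragment — with the usual propositional rules, a L\"ob-type modal rule for each $\Box_i$, and rules internalising $\Box_i\varphi\to\Box_{i+1}\varphi$ and $\Diamond_i\varphi\to\Box_{i+1}\Diamond_i\varphi$ — and verify that $\mathsf{G}$ is sound and complete for $\mathsf{GLP}$, admits cut, and supports a terminating root-first proof search. Although $\mathsf{GLP}$ is not locally finite (words already give infinitely many pairwise non-equivalent formulas in a single variable), proof search can still be made to terminate by loop-checking on the modal rules, exactly as for $\mathsf{GL}$; the cyclic and non-well-founded proof systems studied in the present paper are precisely the right setting in which to make this rigorous.

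With the finite canonical search tree in hand, I would define by simultaneous recursion on it, for every sequent $\Pi\Rightarrow\Delta$ occurring in the search, a $p$-free formula $\mathbf{A}_p(\Pi\Rightarrow\Delta)$ (the $\forall p$-interpolant, to be used at succedent occurrences) and its dual $\mathbf{E}_p(\Pi\Rightarrow\Delta)$ (the $\exists p$-interpolant, used at antecedent occurrences), with clauses mirroring the rules of $\mathsf{G}$: $\top$ or $\bot$ at axioms; conjunctions and disjunctions of the premise-interpolants at the propositional rules (taking the conjunction over \emph{all} premises wherever a rule branches); and $\Box_i$, respectively $\boxdot_i$, prefixed to the premise-interpolant at the modal rule for $\Box_i$. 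The loop-checking that makes search terminating is exactly what makes this recursion well-founded. The $p$-coprojection of $\varphi$ is then $\theta=\mathbf{A}_p({\Rightarrow}\,\varphi)$, which by construction satisfies $\mathit{PV}(\theta)\subseteq\mathit{PV}(\varphi)\setminus\{p\}$.

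Finally I would verify the two defining properties. That $\mathsf{GLP}\vdash\theta\to\varphi$ is a routine induction over the search tree, each clause being a provability check in $\mathsf{G}$ using cut. The uniformity direction — if $\mathsf{GLP}\vdash\psi\to\varphi$ and $p\notin\mathit{PV}(\psi)$ then $\mathsf{GLP}\vdash\psi\to\theta$ — is again proved by induction over the tree: since at every node the construction takes a conjunction over all premises, the interpolant dominates every $p$-free antecedent that could occur in any $\mathsf{G}$-proof of the corresponding implication, which is precisely the uniformity one needs. The delicate step is the modal one, where one must check that prefixing $\Box_i$ (or $\boxdot_i$) to the premise-interpolants still captures \emph{all} $p$-free consequences in the presence of the reflection-type and polymodal interaction axioms; discharging this, and — upstream of it — actually obtaining a well-behaved terminating sequent calculus for $\mathsf{GLP}$ at all (naive cut elimination fails for the standard axiomatisation, so one must either design the rules carefully or route the argument through the cyclic/non-well-founded systems of this paper), are the two main obstacles. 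Apart from these points the argument runs in parallel with the $\mathsf{GL}$ case.
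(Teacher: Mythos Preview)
The paper does not give a proof of this theorem at all: it simply records the statement and defers to the cited reference \cite{Shamkanov2011}. So there is nothing in the paper to compare your argument against; your proposal is an attempt to supply a proof where the paper offers only a citation.

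As for the substance of your sketch: the overall strategy---restrict to the finite-modality fragment, set up a terminating sequent-style proof search, and run Pitts' simultaneous recursion to define $\mathbf{A}_p$ and $\mathbf{E}_p$---is indeed the method used in \cite{Shamkanov2011}, and you have correctly identified the two genuine obstacles (obtaining a well-behaved sequent system for $\mathsf{GLP}$, and handling the interaction axioms at the modal clauses). One point, however, is misleading: you suggest that ``the cyclic and non-well-founded proof systems studied in the present paper are precisely the right setting'' for the terminating search. They are not. The systems in this paper are cyclic and $\infty$-derivations in the \emph{Frege--Hilbert} calculus, with no subformula property and no proof-search structure; they are useless for a Pitts-style construction. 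What is actually needed (and what \cite{Shamkanov2011} uses) is a genuine sequent calculus for $\mathsf{GLP}$---or for Beklemishev's auxiliary logic $\mathsf{J}$, which is then lifted to $\mathsf{GLP}$---with cut admissibility and a terminating root-first search; developing that calculus is a separate and substantial piece of work not contained in the present paper. Apart from this misattribution, your outline is a fair high-level description of how the cited proof goes.
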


Note that a $p$-coprojection of a formula $\varphi$ is unique up to the provable equivalence in $\mathsf{GLP}$. Let us denote this $p$-coprojection of $\varphi$ by $\forall p\; \varphi$. For a formula $\varphi$ such that $\mathit{PV}(\varphi)=\{p_1, \dotsc, p_n\}$, we define its \emph{universal closure $\overline{\forall}\varphi$} as $\forall p_n \; \dotso \forall p_1 \;\varphi$. Trivially, $\overline{\forall}\varphi$ is a ground formula, i.e. $ \mathit{PV}(\overline{\forall}\varphi)=\emptyset$. Also, we have:
\begin{itemize}
\item $\mathsf{GLP}\vdash \overline{\forall}\varphi \to \varphi$;
\item if $\mathsf{GLP}\vdash \psi\to \varphi$ and $\psi$ is a ground formula, then $\mathsf{GLP}\vdash \psi\to \overline{\forall}\varphi$.
\end{itemize}

\begin{thm} \label{fin <-> inf for fin sigma}
For any finite set of formulas $\Sigma$, any set of formulas $\Gamma$, and any formula $\varphi$, we have
\[\Sigma ;\Gamma \vdash \varphi \Longleftrightarrow \Sigma ;\Gamma \Vdash \varphi.\footnote{Note that this result could also be obtained semantically similar to the proof of Proposition 2.2 given in the appendix in \cite{Shamkanov2020a}.}\]
\end{thm}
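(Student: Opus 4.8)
The nontrivial direction is ``$\Rightarrow$''; the converse is the trivial implication already recorded at the end of Section~\ref{s1.5}. The plan is to first trim the local assumptions to a finite set, and then refute $\varphi$ semantically, exploiting the finiteness of $\Sigma$ at the single decisive step.

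First I would cut $\Gamma$ down. Suppose $\Sigma;\Gamma\Vdash\varphi$, i.e. $\Sigma;\Gamma\Vdash_\omega\varphi$. The main fragment of any $\omega$-derivation is a finite tree (it is well-founded and, after cutting at nec-premises and boxed $\omega$-premises, finitely branching, hence finite by K\"onig's lemma), and every non-boxed assumption leaf lies in this main fragment; so only finitely many elements of $\Gamma$ occur as non-boxed assumptions. Thus there is a finite $\Gamma'\subseteq\Gamma$ with $\Sigma;\Gamma'\Vdash\varphi$, and by monotonicity it suffices to prove $\Sigma;\Gamma'\vdash\varphi$. I may therefore assume both $\Sigma$ and $\Gamma'$ finite and set $s:=\bigwedge\Sigma$ and $\gamma:=\bigwedge\Gamma'$ (empty conjunctions read as $\top$). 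Recall that $\Sigma;\Gamma'\vdash\varphi$ holds iff $\mathsf{GLP}\vdash\gamma\wedge\Box_0 s\to\varphi$.

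The core is a contrapositive, semantic argument. Assume $\mathsf{GLP}\nvdash\gamma\wedge\Box_0 s\to\varphi$. By the weak neighbourhood completeness theorem of Beklemishev and Gabelaia recalled in Section~\ref{s3}, there are a $\mathsf{GLP}$-model $\mathcal{M}$ with valuation $v$ and a world $x$ such that $\mathcal{M},x\vDash\gamma$, $\mathcal{M},x\vDash\Box_0 s$ and $\mathcal{M},x\nvDash\varphi$. Since $\Sigma$ is finite, $\Box_0 s$ is a single formula, and its truth at $x$ means topologically that $x\in\mathit{cd}_{\tau_0}(v(s))$; hence there is a punctured $0$-neighbourhood $U$ of $x$ with $U\subseteq v(s)$, so that $s$, and therefore every $\xi\in\Sigma$, is true at each world of $U$. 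Putting $X':=U\cup\{x\}$, which is $0$-open, I pass to the open subframe determined by $X'$ with the restricted valuation: by Lemma~\ref{subframe} this is again a $\mathsf{GLP}$-frame, and by Lemma~\ref{submodel} truth at every world of $X'$ is unchanged. In the resulting model $\mathcal{M}'$ we then have $\mathcal{M}',x\vDash\gamma$, $\mathcal{M}',y\vDash\xi$ for all $y\in X'\setminus\{x\}$ and all $\xi\in\Sigma$, and $\mathcal{M}',x\nvDash\varphi$. This witnesses $\Sigma;\Gamma'\nvDash\varphi$, whence $\Sigma;\Gamma'\nVdash\varphi$ by Theorem~\ref{neighbourhood completeness}, contradicting the assumption. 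Therefore $\mathsf{GLP}\vdash\gamma\wedge\Box_0 s\to\varphi$, i.e. $\Sigma;\Gamma'\vdash\varphi$, and so $\Sigma;\Gamma\vdash\varphi$.

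The step I expect to be the crux, and the only place the hypothesis on $\Sigma$ is used, is the passage from $\Box_0 s$ to a \emph{single} punctured $0$-neighbourhood on which all of $\Sigma$ holds: for infinite $\Sigma$ one would get one neighbourhood per conjunct with no common open refinement, and the argument breaks. I note that the word calculus developed in this section supports an alternative, purely syntactic route: project the formulas occurring in the $\omega$-rule onto ground formulas via uniform interpolation, use Proposition~\ref{ground formulas to words} to turn their boxes into words, and invoke the well-foundedness of words (Theorem~\ref{well-foundness for words}) to terminate the $\omega$-rule after finitely many premises; there the obstacle is arranging the coprojections so that the implications $\Box_0(\cdot)\to(\cdot)$ needed to rebuild the derivation stay provable. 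I would nonetheless prefer the semantic route above, as it only reuses results already established.
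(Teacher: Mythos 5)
Your proof is correct, but it takes a genuinely different route from the paper's. The paper argues syntactically: the nontrivial direction is proved by transfinite induction on the ordinal height of the $\omega$-derivation, and the crucial case of the rule $(\omega)$ is discharged by setting $\xi_n \coloneq \bigwedge\Sigma \wedge \Box_0\bigwedge\Sigma \to \varphi_n$, passing to universal closures $\overline{\forall}\xi_n$ via uniform interpolation, converting $\Box_0\overline{\forall}\xi_n$ into words by Proposition \ref{ground formulas to words}, and invoking Theorem \ref{well-foundness for words} to force the descending chain to terminate --- this is exactly the ``alternative syntactic route'' you sketch and set aside at the end, and the obstacle you anticipate there (keeping the implications provable after coprojection) is handled by absorbing $\bigwedge\Sigma\wedge\Box_0\bigwedge\Sigma$ into the $\xi_n$ before projecting, which is where finiteness of $\Sigma$ enters. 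Your argument instead is semantic and contrapositive: the reduction of $\Gamma$ to a finite $\Gamma'$ is sound (non-boxed assumption leaves do lie in the finite main fragment), the crux --- that finiteness of $\Sigma$ makes $\Box_0\bigwedge\Sigma$ a single formula whose truth at the countermodel world yields one punctured $0$-neighbourhood on which all of $\Sigma$ holds --- is correctly identified, and the refutation via the Beklemishev--Gabelaia weak completeness theorem combined with Theorem \ref{neighbourhood completeness}, Lemma \ref{subframe} and Lemma \ref{submodel} goes through without circularity, since Theorem \ref{neighbourhood completeness} is established in the preceding section independently of the present theorem and the weak completeness result is external \cite{Beklemishev2013}. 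In fact, yours is essentially the semantic alternative acknowledged in the paper's own footnote (cf.\ Proposition 2.2 of \cite{Shamkanov2020a}). The trade-off: your route is shorter and reuses results already at hand, but imports the substantial theorem of \cite{Beklemishev2013} as a black box; the paper's syntactic route keeps the entire development independent of \cite{Beklemishev2013}, so that the concluding corollary gives a self-contained proof of strong local neighbourhood completeness of $\mathsf{GLP}$ --- strengthening Beklemishev--Gabelaia rather than presupposing it --- and it also yields admissibility of the $\omega$-rule as a purely syntactic by-product. As a minor streamlining of your argument, the explicit passage to the open submodel can be avoided: your model, world $x$ and $0$-neighbourhood $U\cup\{x\}$ directly witness the failure of $\Sigma;\Gamma'\vDash^{\ast}\varphi$, so Proposition \ref{two glocal neighbourhood relations} finishes the job.
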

\begin{proof}
The left-to-right implication immediately holds. We obtain the converse. Assume $\Sigma ;\Gamma \Vdash \varphi$. We have an $\omega$-derivation $\delta$ with the root marked by $\varphi$ in which all boxed assumption leaves are marked by some elements of $\Sigma$ and all non-boxed assumption leaves are marked by some elements of $\Gamma$. 
By transfinite induction on the ordinal height of $\delta $, we prove that $\Sigma ;\Gamma \vdash \varphi$.

If $\varphi$ is an axiom of $\mathsf{GLP}$ or an element of $\Gamma$, then $\Sigma ;\Gamma \vdash \varphi$. Otherwise, consider the lowermost application of an inference rule in $\delta$. 

Case 1. Suppose that $\delta$ has the form
\begin{gather*}
\AXC{$\delta^\prime$}
\noLine
\UIC{$\vdots$}
\noLine
\UIC{$\psi$}
\AXC{$\delta^{\prime\prime}$}
\noLine
\UIC{$\vdots$}
\noLine
\UIC{$\psi \to \varphi$}
\LeftLabel{$\mathsf{mp}$}
\RightLabel{ .}
\BIC{$\varphi$}
\DisplayProof
\end{gather*}
By the induction hypothesis applied to $ \delta^\prime $ and $\delta^{\prime\prime} $, we have 
$\Sigma ;\Gamma \vdash \psi\to\varphi$ and $\Sigma ;\Gamma \vdash \psi$. Therefore, $\Sigma ;\Gamma \vdash \varphi$.

Case 2. Suppose that $\delta$ has the form
\begin{gather*}
\AXC{$\delta^\prime$}
\noLine
\UIC{$\vdots$}
\noLine
\UIC{$\psi$}
\LeftLabel{$\mathsf{nec}$}
\RightLabel{ ,}
\UIC{$\Box_0 \psi$}
\DisplayProof
\end{gather*}
where $\Box_0 \psi =\varphi$. We see that $\Sigma ; \Sigma \Vdash \psi$. 
By the induction hypothesis, we have $\Sigma ;\Sigma \vdash \psi$. Applying the rule ($\mathsf{nec}$), we obtain $\Sigma ; \emptyset \vdash \varphi$ and $\Sigma ; \Gamma \vdash \varphi$.

Case 3. Suppose that $\delta$ has the form
\begin{gather*}
\AXC{$\delta^\prime$}
\noLine
\UIC{$\vdots$}
\noLine
\UIC{$\Box_0\varphi_{1} \rightarrow \varphi_0$}
\AXC{$\delta^{\prime\prime}$}
\noLine
\UIC{$\vdots$}
\noLine
\UIC{$\Box_0\varphi_{2} \rightarrow \varphi_1$}
\AXC{$\delta^{\prime\prime\prime}$}
\noLine
\UIC{$\vdots$}
\noLine
\UIC{$\Box_0\varphi_{3} \rightarrow \varphi_2$}
\AXC{$\dots$}
\LeftLabel{$\omega$}
\RightLabel{ ,}
\QIC{$ \varphi_0$}
\DisplayProof 
\end{gather*}
where $\varphi_0=\varphi$.
We see that $\Sigma ; \Sigma \Vdash \Box_0\varphi_{n+2} \rightarrow \varphi_{n+1}$ for every $n\in \mathbb{N}$. By the induction hypothesis, we have $\Sigma ;\Sigma \vdash \Box_0\varphi_{n+2} \rightarrow \varphi_{n+1}$. Hence, for every $n\in \mathbb{N}$, $\mathsf{GLP} \vdash \bigwedge \Sigma \wedge \Box_0 \bigwedge \Sigma \to (\Box_0\varphi_{n+2} \rightarrow \varphi_{n+1})$ and $\mathsf{GLP} \vdash \Box_0\xi_{n+2} \rightarrow \xi_{n+1}$, where $\xi_n\coloneq \bigwedge \Sigma \wedge \Box_0 \bigwedge \Sigma \to \varphi_n$. Furthermore, $\mathsf{GLP} \vdash \Box_0 \overline{\forall}\xi_{n+2} \rightarrow \overline{\forall}  \xi_{n+1}$ and $\mathsf{GLP} \vdash \Box_0 \Box_0 \overline{\forall}\xi_{n+2} \rightarrow \Box_0 \overline{\forall}  \xi_{n+1}$. Applying Theorem \ref{well-foundness for words} and Proposition \ref{ground formulas to words}, we obtain $\mathsf{GLP} \vdash  \Box_0 \overline{\forall} \xi_{n+1}$ for some $n\in \mathbb{N}$. Therefore, $\mathsf{GLP} \vdash \Box_0 \overline{\forall}  \xi_{1}$, $\mathsf{GLP} \vdash \Box_0 \xi_{1}$ and $\mathsf{GLP} \vdash \Box_0( \bigwedge \Sigma \wedge \Box_0 \bigwedge \Sigma \to \varphi_1)$. Consequently, $\mathsf{GLP} \vdash  \Box_0 \bigwedge \Sigma \to \Box_0 \varphi_1$, $\Sigma ; \emptyset \vdash \Box_0\varphi_1$ and $\Sigma ; \Gamma \vdash \Box_0\varphi_1$. By the induction hypothesis applied to $ \delta^\prime $, we also have $\Sigma ;\Gamma  \vdash \Box_0\varphi_{1} \rightarrow \varphi_0$. Hence, we obtain $\Sigma ;\Gamma  \vdash  \varphi_0$, i.e. $\Sigma ;\Gamma  \vdash  \varphi$.
\end{proof}
\begin{cor}
The inference rule 
\[
\AXC{$\Box_0\varphi_{1} \rightarrow \varphi_0$}
\AXC{$\Box_0\varphi_{2} \rightarrow \varphi_1$}
\AXC{$\Box_0\varphi_{3} \rightarrow \varphi_2$}
\AXC{$\dots$}
\LeftLabel{$\omega$}
\RightLabel{ .}
\QIC{$ \varphi_0$}
\DisplayProof 
\] 
is admissible in $\mathsf{GLP}$, i.e. $\mathsf{GLP}\vdash \varphi_0$ whenever $\mathsf{GLP}\vdash \Box \varphi_{i+1} \to \varphi_i$ for all $i\in \mathbb{N}$.
\end{cor}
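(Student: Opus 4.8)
The plan is to read the corollary off Theorem~\ref{fin <-> inf for fin sigma} in the special case $\Sigma = \Gamma = \emptyset$. Suppose $\mathsf{GLP}\vdash \Box_0 \varphi_{i+1}\to\varphi_i$ for every $i\in\mathbb{N}$. Each of these formulas then possesses an ordinary finite Frege--Hilbert derivation in $\mathsf{GLP}$, which is in particular an $\omega$-derivation (indeed a finite, well-founded one) that contains no assumption leaves at all. Hence $\emptyset;\emptyset \Vdash \Box_0\varphi_{i+1}\to\varphi_i$ for every $i$.

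Next I would stack these $\omega$-derivations under a single application of the rule ($\omega$):
\[
\AXC{$\Box_0\varphi_{1} \rightarrow \varphi_0$}
\AXC{$\Box_0\varphi_{2} \rightarrow \varphi_1$}
\AXC{$\Box_0\varphi_{3} \rightarrow \varphi_2$}
\AXC{$\dots$}
\LeftLabel{$\omega$}
\QIC{$\varphi_0$}
\DisplayProof
\]
Since none of the premises $\Box_0\varphi_{n+1}\to\varphi_n$ carries any assumption leaf --- boxed or otherwise --- the resulting $\omega$-derivation of $\varphi_0$ has no assumption leaves. Therefore $\emptyset;\emptyset \Vdash \varphi_0$.

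Finally, I would apply Theorem~\ref{fin <-> inf for fin sigma} with $\Sigma = \emptyset$, which is finite so that the hypothesis of that theorem is satisfied, and $\Gamma = \emptyset$. This yields $\emptyset;\emptyset \vdash \varphi_0$. By the characterization of $\vdash$ recorded just after its definition, $\emptyset;\emptyset \vdash \varphi_0$ means $\mathsf{GLP}\vdash \bigwedge\emptyset \wedge \Box_0\bigwedge\emptyset \to \varphi_0$, that is, $\mathsf{GLP}\vdash \top\wedge\Box_0\top\to\varphi_0$, which in classical propositional logic is equivalent to $\mathsf{GLP}\vdash\varphi_0$.

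There is no genuine obstacle here: the corollary is an immediate instance of the already-established Theorem~\ref{fin <-> inf for fin sigma}, and the only point worth checking is the trivial one that the empty set of assumptions is finite, so that the finiteness restriction on $\Sigma$ in that theorem does not interfere. The real work --- the elimination of the $\omega$-rule --- was carried out in Case~3 of the proof of that theorem through the universal-closure and word machinery (Theorem~\ref{well-foundness for words} together with Proposition~\ref{ground formulas to words}).
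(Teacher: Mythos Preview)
Your proposal is correct and follows exactly the same route as the paper: from the hypothesis you build the obvious $\omega$-derivation witnessing $\emptyset;\emptyset\Vdash\varphi_0$, then invoke Theorem~\ref{fin <-> inf for fin sigma} with $\Sigma=\Gamma=\emptyset$ to conclude $\mathsf{GLP}\vdash\varphi_0$. The paper's proof is the same two-line argument, only with the intermediate unpacking (construction of the $\omega$-derivation and the reduction of $\emptyset;\emptyset\vdash\varphi_0$ to $\mathsf{GLP}\vdash\varphi_0$) left implicit.
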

\begin{proof}
If $\mathsf{GLP}\vdash \Box \varphi_{i+1} \to \varphi_i$ for all $i\in \mathbb{N}$, then $\emptyset ; \emptyset \Vdash \varphi_0$. By the previous theorem, $\mathsf{GLP}  \vdash \varphi_0$ 
\end{proof}
Applying Theorem \ref{neighbourhood completeness} and Theorem \ref{fin <-> inf for fin sigma}, we immediately obtain strong local neighbourhood completeness of $\mathsf{GLP} $. Obviously, weak global neighbourhood completeness also holds.
\begin{cor}
We have
\begin{gather*}
\Gamma \vdash_l \varphi \Longleftrightarrow  \Gamma \vDash_l \varphi,\quad \quad
\Gamma \vdash_g \varphi  \Longleftrightarrow  \Gamma \vDash_g \varphi,
\end{gather*}
where the second equivalence holds for finite sets $\Gamma$.
\end{cor}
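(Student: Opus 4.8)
The plan is to derive both equivalences by chaining together results already established in the preceding sections, exploiting the notational conventions that realize the local and global relations as instances of the global-local ones. First I would recall from the definitions in Section~\ref{s1} that $\Gamma \vdash_l \varphi \iff \emptyset;\Gamma \vdash \varphi$ and $\Gamma \vdash_g \varphi \iff \Gamma;\Gamma \vdash \varphi$, and from the conventions fixed after Theorem~\ref{inf <-> omega} that $\Gamma \Vdash_l \varphi \iff \emptyset;\Gamma \Vdash \varphi$ and $\Gamma \Vdash_g \varphi \iff \Gamma;\Gamma \Vdash \varphi$.

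For the local equivalence I would argue
\[\Gamma \vdash_l \varphi \iff \emptyset;\Gamma \vdash \varphi \iff \emptyset;\Gamma \Vdash \varphi \iff \Gamma \Vdash_l \varphi \iff \Gamma \vDash_l \varphi,\]
where the middle step is Theorem~\ref{fin <-> inf for fin sigma} applied with the finite set $\Sigma=\emptyset$, and the final step is the Corollary following Theorem~\ref{neighbourhood completeness} (itself obtained from Theorem~\ref{algebraic completeness} together with Proposition~\ref{neighbourhood soundness}). For the global equivalence, under the standing hypothesis that $\Gamma$ is finite, I would argue
\[\Gamma \vdash_g \varphi \iff \Gamma;\Gamma \vdash \varphi \iff \Gamma;\Gamma \Vdash \varphi \iff \Gamma \Vdash_g \varphi \iff \Gamma \vDash_g \varphi,\]
where now the middle step is Theorem~\ref{fin <-> inf for fin sigma} applied with the finite set $\Sigma=\Gamma$, and the final step is again that Corollary.

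No step carries new mathematical content, so I do not expect a genuine obstacle; the only delicate point is the bookkeeping of where finiteness is required. Theorem~\ref{fin <-> inf for fin sigma} collapses the infinitary derivability relation to the ordinary one only when its ``boxed'' parameter $\Sigma$ is finite, and in the global case this parameter is $\Gamma$ itself --- this is precisely why the second equivalence must be restricted to finite $\Gamma$, whereas the local equivalence, having $\Sigma=\emptyset$, is unrestricted. I would also note, as the surrounding text does, that the weak global completeness statement $\vdash_g \varphi \iff \vDash_g \varphi$ for the empty premise set falls out as the case $\Gamma=\emptyset$.
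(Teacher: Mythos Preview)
Your proposal is correct and follows exactly the route the paper itself takes: the corollary is stated as an immediate consequence of Theorem~\ref{fin <-> inf for fin sigma} together with Theorem~\ref{neighbourhood completeness} (and its corollary), and your chain of equivalences merely makes that explicit. Your remark on where finiteness of $\Sigma$ enters is precisely the point; the only quibble is that the paper's phrase ``weak global neighbourhood completeness'' refers to the finite-$\Gamma$ case rather than specifically to $\Gamma=\emptyset$.
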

\begin{remark}
In the case of $\mathsf{GLP} $ with ordinary proofs, strong global neighbourhood completeness does not hold since $\{\Box_0 p_{i+1} \to p_i\mid i\in\mathbb{N}\}\vDash_g p_0$ and $\{\Box_0 p_{i+1} \to p_i\mid i\in\mathbb{N}\}\nvdash_g p_0$.  
\end{remark}

\paragraph*{Acknowledgements.} 

I am grateful to Alexander Citkin and Alexei Muravitsky for the opportunity to join this volume in memory of Alexander Vladimirovich Kuznetsov.
Heartily thanks to my beloved wife Mariya Shamkanova for her warm and constant support. S.D.G.

\bibliographystyle{amsplain}
\bibliography{Collected}

\end{document}